\DeclareRobustCommand{\SkipTocEntry}[5]{}
\setlist[enumerate,1]{label={\upshape(\arabic*)}}
\setlist[enumerate,2]{label={\upshape(\alph*)}}
\tikzset{blackv/.style={circle,fill=black,inner sep=3pt,outer sep=3pt},
         whitev/.style={circle,fill=white,draw=black,inner sep=3pt,outer sep=3pt},
         blabel/.style={
           circle, fill=white, draw=black, font=\scriptsize,
           inner sep=0.5pt, outer sep=0pt},
         redv/.style={circle,fill=red,inner sep=3pt,outer sep=3pt},
         block/.style={draw,rectangle split,rectangle split horizontal,rectangle split parts=#1},
         symbol/.style={
           draw=none,
           every to/.append style={
             edge node={node [sloped, allow upside down, auto=false]{$#1$}}}}
}
\newcolumntype{C}{>{$}c<{$}}
\newtheorem{theorem}{Theorem}[section]
\newtheorem{theoremi}{Theorem}
\newtheorem{corollaryi}[theoremi]{Corollary}
\newtheorem{corollary}[theorem]{Corollary}
\newtheorem{lemma}[theorem]{Lemma}
\newtheorem*{lemma*}{Lemma}
\newtheorem*{theorem*}{Theorem}
\newtheorem{proposition}[theorem]{Proposition}
\newtheorem{definition-proposition}[theorem]{Definition-Proposition}
\newtheorem{question}[theorem]{Question}
\theoremstyle{definition}
\newtheorem{definition}[theorem]{Definition}
\newtheorem{remark}[theorem]{Remark}
\newtheorem{example}[theorem]{Example}
\newcommand{\la}{\langle}
\newcommand{\ra}{\rangle}
\newcommand{\2}{\mathbf{2}}
\newcommand{\exk}{\ov{\kappa}}
\renewcommand{\AA}{\mathcal{A}}
\newcommand{\C}{\mathfrak{C}}
\newcommand{\CC}{\mathcal{C}}
\newcommand{\FF}{\mathcal{F}}
\newcommand{\FFF}{\mathsf{F}}
\newcommand{\HH}{\mathcal{H}}
\newcommand{\II}{\mathcal{I}}
\renewcommand{\SS}{\mathcal{S}}
\newcommand{\TT}{\mathcal{T}}
\newcommand{\TTT}{\mathsf{T}}
\newcommand{\UU}{\mathcal{U}}
\newcommand{\WW}{\mathcal{W}}
\newcommand{\WL}{\mathsf{W_L}}
\newcommand{\End}{\operatorname{End}\nolimits}
\newcommand{\op}{\operatorname{op}\nolimits}
\newcommand{\Image}{\operatorname{Im}\nolimits}
\newcommand{\Kernel}{\operatorname{Ker}\nolimits}
\newcommand{\Cokernel}{\operatorname{Coker}\nolimits}
\newcommand{\coker}{\Cokernel}
\newcommand{\im}{\Image}
\renewcommand{\ker}{\Kernel}
\newcommand{\un}{\underline}
\newcommand{\ov}{\overline}
\newcommand{\ot}{\leftarrow}
\newcommand{\clo}{\mathsf{CLO}}
\newcommand{\covd}{\lessdot}
\newcommand{\cov}{\gtrdot}
\DeclareMathOperator{\moduleCategory}{\mathsf{mod}} \renewcommand{\mod}{\moduleCategory}
\DeclareMathOperator{\simp}{\mathsf{sim}}
\DeclareMathOperator{\ice}{\mathsf{ice}}
\DeclareMathOperator{\theart}{\mathsf{tors-heart}}
\DeclareMathOperator{\itv}{\mathsf{itv}}
\DeclareMathOperator{\witv}{\mathsf{wide-itv}}
\DeclareMathOperator{\iitv}{\mathsf{ice-itv}}
\DeclareMathOperator{\wide}{\mathsf{wide}}
\DeclareMathOperator{\tors}{\mathsf{tors}}
\DeclareMathOperator{\torf}{\mathsf{torf}}
\DeclareMathOperator{\NC}{\mathsf{NC}}
\DeclareMathOperator{\brick}{\mathsf{brick}}
\DeclareMathOperator{\sbrick}{\mathsf{sbrick}}
\DeclareMathOperator{\jlabel}{\mathsf{j-label}}
\DeclareMathOperator{\Hasse}{\mathsf{Hasse}}
\DeclareMathOperator{\Sub}{\mathsf{Sub}}
\DeclareMathOperator{\Fac}{\mathsf{Fac}}
\DeclareMathOperator{\Filt}{\mathsf{Filt}}
\DeclareMathOperator{\add}{\mathsf{add}}
\DeclareMathOperator{\WR}{\mathsf{W_R}}
\DeclareMathOperator{\id}{\mathsf{id}}
\DeclareMathOperator{\jirr}{\mathsf{j-irr}^c}
\DeclareMathOperator{\mirr}{\mathsf{m-irr}^c}
\DeclareMathOperator{\CJR}{\mathsf{CJR}}
\DeclareMathOperator{\CMR}{\mathsf{CMR}}
\newcommand{\iso}{\cong}
\newcommand{\isoto}{\xrightarrow{\sim}}
\newcommand{\defl}{\twoheadrightarrow}
\newcommand{\sst}[1]{\substack{#1}}
\numberwithin{equation}{section}
\begin{document}
\title[From tors to wide and ICE]{From the lattice of torsion classes to the posets of wide subcategories and ICE-closed subcategories}

\author[H. Enomoto]{Haruhisa Enomoto}
\address{Graduate School of Science, Osaka Prefecture University, 1-1 Gakuen-cho, Naka-ku, Sakai, Osaka 599-8531, Japan}
\email{the35883@osakafu-u.ac.jp}

\subjclass[2020]{16G10, 18E40, 05E10, 06A07}
\keywords{torsion class, wide subcategory, ICE-closed subcategory, completely semidistributive lattice, kappa order, core label order}
\begin{abstract}
  In this paper, we compute the posets of wide subcategories and ICE-closed subcategories from the lattice of torsion classes in an abelian length category in a purely lattice-theoretical way, by using the kappa map in a completely semidistributive lattice.
  As for the poset of wide subcategories, we give two more simple constructions via a bijection between wide subcategories and torsion classes with canonical join representations. More precisely, for a completely semidistributive lattice, we give two poset structures on the set of elements with canonical join representations: the kappa order (defined using the extended kappa map of Barnard--Todorov--Zhu), and the core label order (generalizing the shard intersection order for congruence-uniform lattices). Then we show that these posets for the lattice of torsion classes coincide and are isomorphic to the poset of wide subcategories. As a byproduct, we give a simple description of the shard intersection order on a finite Coxeter group using the extended kappa map.
\end{abstract}

\maketitle

\tableofcontents

\section{Introduction}

Let $\Lambda$ be a finite-dimensional algebra and $\mod\Lambda$ the category of finitely generated $\Lambda$-modules.
In representation theory of algebras, several classes of subcategories of $\mod\Lambda$ have been investigated.
In this paper, we mainly consider the following three classes: torsion classes, wide subcategories, and ICE-closed subcategories.
\emph{Torsion classes} have been playing an important role in the recent progress in representation theory via $\tau$-tilting theory \cite{AIR}.
\emph{Wide subcategories} are also classical and fundamental objects related to many things like torsion classes \cite{MS} and stability conditions.
\emph{ICE-closed subcategories} are subcategories closed under taking Images, Cokernels, and Extensions, which are introduced by the author in \cite{eno-rigid} as a common generalization of torsion classes and wide subcategories, and the relation to torsion classes are studied in \cite{ES}.

These classes of subcategories form posets under inclusion, and moreover, they are \emph{complete lattices}, that is, they have arbitrary joins and meets. Denote by $\tors\Lambda$, $\wide\Lambda$, and $\ice\Lambda$ the lattices of torsion classes, wide subcategories, and ICE-closed subcategories of $\mod\Lambda$ respectively. Among these lattices, the lattice property of $\tors\Lambda$ have recently been the focus of some attention (e.g. \cite{AP, BCZ, BTZ,DIRRT, thomas}).
The aim of this paper is to show that the lattice $\tors\Lambda$ remembers so much information about $\mod\Lambda$ that we can reconstruct $\ice\Lambda$ and $\wide\Lambda$ from it.

The relation between $\tors\Lambda$ and $\wide\Lambda$ is also of interest in combinatorics. Let $Q$ be a Dynkin quiver and $kQ$ its path algebra.Then $\tors kQ$ is isomorphic to the \emph{Cambrian lattice} and $\wide kQ$ is isomorphic to the \emph{non-crossing partition lattice} \cite{IT}. Similarly, let $\Pi_Q$ be the preprojective algebra of $Q$. Then $\tors\Pi_Q$ is isomorphic to the weak order of the Weyl group $W$ of $Q$ \cite{mizuno} and $\wide\Pi_Q$ is isomorphic to the \emph{shard intersection order on $W$} \cite{thomas-shard}, which is a relatively new poset structure on $W$ introduced by Reading \cite{reading-shard}. In both situations, it is not clear at first glance how $\tors \Lambda$ and $\wide \Lambda$ are related.

The main result of this paper is summarized as follows:
\begin{theoremi}\label{thm:intro-a}
  Let $\Lambda$ be a finite-dimensional algebra, and suppose that the lattice $L:= \tors\Lambda$ of torsion classes is given as an abstract lattice. Then we can compute the posets $\wide\Lambda$ and $\ice\Lambda$ only from the lattice $L$, without using any information on $\Lambda$ or $\mod\Lambda$.
\end{theoremi}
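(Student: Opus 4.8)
The plan is to show that each of $\wide\Lambda$ and $\ice\Lambda$ can be recovered from $L=\tors\Lambda$ by an explicit purely lattice-theoretical recipe, so the proof of Theorem~\ref{thm:intro-a} amounts to assembling several ingredients, most of which are the technical results that should appear later in the paper. The starting point is that $L=\tors\Lambda$ is a \emph{completely semidistributive} lattice: this is known from \cite{DIRRT}, and it is what makes the kappa map $\kappa$ (and its extension $\exk$ in the sense of Barnard--Todorov--Zhu) available as a lattice-intrinsic operation on $L$. So the first step is to recall that $\kappa$ and $\exk$ are definable from the abstract lattice $L$ alone, together with the combinatorics of the Hasse quiver, join-irreducibles, and canonical join representations, none of which reference $\Lambda$.

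Next I would treat $\wide\Lambda$. The known module-theoretic input is the Marks--\v{S}\v{t}ov\'i\v{c}ek bijection between wide subcategories and torsion classes admitting a canonical join representation \cite{MS}, together with the order-theoretic refinement that under this bijection the inclusion order on $\wide\Lambda$ corresponds to a poset structure on the set $\ftors\Lambda$ (or rather the elements of $L$ with canonical join representations) that is definable from $L$ alone --- concretely, the kappa order and the core label order introduced in the abstract. So the plan for the wide part is: (i) identify the subset $S\subseteq L$ of elements admitting a canonical join representation, which is a lattice-theoretic condition; (ii) equip $S$ with the kappa order (equivalently the core label order) $\preceq$, built from $\exk$; (iii) invoke the comparison theorem (stated later) that $(S,\preceq)\cong\wide\Lambda$. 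Each of these steps is "only from $L$", so composing them gives the wide half of the theorem.

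For $\ice\Lambda$ the strategy is parallel but uses the results of \cite{ES} relating ICE-closed subcategories to intervals in $\tors\Lambda$ --- more precisely, to certain pairs $(U,T)$ with $U\le T$ in $L$ satisfying a condition that can be phrased via $\kappa$ (the "ice-interval" condition, $\iitv$, isolated later in the paper). So here I would: identify the set of such distinguished intervals of $L$ as a lattice-theoretic subset, define the partial order on them that matches inclusion of ICE-closed subcategories, and cite the bijection/isomorphism $\iitv L \cong \ice\Lambda$. Again every clause is intrinsic to $L$.

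Finally, assemble: given $L$ as an abstract lattice, one checks $L$ is completely semidistributive, forms $\kappa$ and $\exk$, extracts $(S,\preceq)$ and the distinguished-interval poset, and concludes $\wide\Lambda$ and $\ice\Lambda$ are determined up to isomorphism. I expect the genuine mathematical content --- and the main obstacle --- to lie \emph{not} in this assembly but in the two comparison theorems it invokes: proving that the kappa order and the core label order on $S$ both coincide with the inclusion order transported from $\wide\Lambda$, and proving the analogous statement for $\iitv L$ and $\ice\Lambda$. In particular, showing that the abstract kappa order is even a partial order (antisymmetry/transitivity) in an arbitrary completely semidistributive lattice, and that it agrees with the core label order, is the crux; once those are in hand, Theorem~\ref{thm:intro-a} follows formally.
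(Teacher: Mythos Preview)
Your plan for $\wide\Lambda$ via the kappa order is sound and corresponds to the paper's \emph{secondary} route (Theorem~\ref{thm:intro-c}, Section~\ref{sec:4}); the paper's primary proof of Theorem~\ref{thm:intro-a} is Theorem~\ref{thm:intro-b}, which treats $\wide\Lambda$ and $\ice\Lambda$ uniformly via the map $\jlabel\colon\itv L\to 2^{\jirr L}$ and the lattice-theoretic notions of wide and ICE intervals. A minor correction on the crux you identify: that $\leq_\kappa$ is a partial order is immediate (it is the conjunction of $\leq$ and the reverse of $\leq$ pulled back along $\exk$), and its coincidence with $\leq_\clo$ is not needed for Theorem~\ref{thm:intro-a}. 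The real content in the kappa-order approach is Lemma~\ref{lem:ie-closed}, namely $\WW=\TTT(\WW)\cap\FFF(\WW)$, which is what converts ``$\TTT(\WW_1)\subseteq\TTT(\WW_2)$ and $\exk(\TTT(\WW_1))\supseteq\exk(\TTT(\WW_2))$'' back into $\WW_1\subseteq\WW_2$.

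Your ICE argument, however, has a genuine gap. The heart map $\HH_{(-)}\colon\iitv L\to\ice\Lambda$ is surjective but \emph{not injective}: many ICE intervals share the same heart, so there is no bijection $\iitv L\cong\ice\Lambda$ to cite, and ``define the partial order on them that matches inclusion of ICE-closed subcategories'' is exactly the missing step. You need a lattice-intrinsic way to detect when two intervals have the same heart and to compare hearts by inclusion. The paper's device is $\jlabel$: one shows (Lemma~\ref{lem:kappa-torf} plus Theorem~\ref{thm:btz}) that for a brick $B$, $B\in\HH_{[\UU,\TT]}$ iff $\TTT(B)\leq\TT$ and $\kappa(\TTT(B))\geq\UU$, i.e.\ iff $\TTT(B)\in\jlabel[\UU,\TT]$. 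Since torsion hearts are determined by their bricks (Lemma~\ref{lem:heart-brick}), this yields $\ice\Lambda\cong\jlabel(\iitv L)\subseteq 2^{\jirr L}$ ordered by inclusion. Without $\jlabel$ or an equivalent, your ICE half does not go through. (Incidentally, the ICE-interval condition in Definition~\ref{def:wide-ice-itv} is phrased via covers, not via $\kappa$.)
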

Actually, our results are valid for any abelian length category $\AA$. Denote by $\tors\AA$ the lattice of torsion classes in $\AA$.
To state our constructions in detail, we introduce some concepts in lattice theory. It is known that $\tors\AA$ is \emph{completely semidistributive} (Definition \ref{def:semidist}), hence we assume that a completely semidistributive lattice $L$ is given.
An element of $L$ is \emph{completely join-irreducible} if it cannot be written as a join of some elements non-trivially (Definition \ref{def:jirr}). We denote by $\jirr L$ the set of completely join-irreducible elements of $L$.
For each element $j \in \jirr L$, there is a unique element $j_*$ covered by $j$, and we define $\kappa(j) \in L$ as follows:
\[
  \kappa(j) = \max \{ x \in L \mid x \wedge j = j_*\}.
\]
Using this kappa map, for each interval $[a,b]$ in $L$, we define $\jlabel [a,b]$ as follows:
\[
  \jlabel [a,b] = \{ j \in \jirr L \mid j \leq b \text{ and } \kappa(j) \geq a \} \subseteq \jirr L.
\]
We remark that $\jlabel$ can be also described by using the \emph{join-irreducible labeling} of the Hasse quiver of $L$, see Theorem \ref{thm:jlabel-label}.

Now consider the case $L = \tors\AA$.
For an interval $[\UU,\TT]$ in $\tors\AA$, we define $\HH_{[\UU,\TT]}:= \TT \cap \UU^\perp$, which we call the \emph{heart} of $[\UU,\TT]$. We call a subcategory $\HH$ of $\AA$ arising in this way a \emph{torsion heart}, and denote by $\theart\AA$ the poset of torsion hearts.
It is shown in \cite{AP, ES} that $\wide\AA \subseteq \ice\AA\subseteq \theart\AA$ holds, and that there are lattice-theoretical characterizations of intervals in $\tors\AA$ whose hearts are wide subcategories and ICE-closed subcategories (Theorem \ref{thm:itv-char}).
Let us call such intervals in $\tors\AA$ \emph{wide intervals} and \emph{ICE intervals} respectively. Now we can state Theorem \ref{thm:intro-a} in detail.
\begin{theoremi}[= Theorem \ref{thm:main}]\label{thm:intro-b}
  Let $\AA$ be an abelian length category and $L := \tors\AA$. Then $\theart\AA$, $\wide\AA$, and $\ice\AA$ are isomorphic to the posets of subsets of $\jirr L$ of the form $\jlabel [a,b]$ for all, wide, and ICE intervals $[a,b]$ in $L$ respectively, ordered by inclusion.
\end{theoremi}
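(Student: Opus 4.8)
The plan is to pass through the \emph{brick labelling} of $\Hasse(\tors\AA)$, which furnishes the dictionary between the lattice-theoretic invariant $\jlabel$ and the categorical invariant ``the bricks contained in the heart''. Recall (see \cite{DIRRT}) that each Hasse arrow $\TT'\lessdot\TT$ of $\tors\AA$ is labelled by a brick $B\in\brick\AA$, yielding a bijection $\jirr(\tors\AA)\isoto\brick\AA$, $j\mapsto B_j$, where $B_j$ is the label of $j_*\lessdot j$ and $j$ is the smallest torsion class containing $B_j$. First I would record the two translations (i) $j\le\TT$ $\iff$ $B_j\in\TT$, which is immediate since torsion classes are closed under factors and extensions and $j=\Filt\Fac(B_j)$, and (ii) $\kappa(j)\ge\UU$ $\iff$ $\Hom(\UU,B_j)=0$, which comes from the categorical description of the kappa map on $\tors\AA$ (the largest torsion class meeting $j$ in $j_*$ is exactly the largest torsion class $\UU$ with $B_j\in\UU^\perp$). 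Combining (i) and (ii), for every interval $[\UU,\TT]$ one gets
\[
\{\,B_j : j\in\jlabel[\UU,\TT]\,\}=\{\,B\in\brick\AA : B\in\TT\text{ and }B\in\UU^\perp\,\}=\brick\HH_{[\UU,\TT]};
\]
alternatively this follows from Theorem~\ref{thm:jlabel-label} together with the fact that the brick labels occurring on arrows of $[\UU,\TT]$ are precisely the bricks of the heart. Transporting $\subseteq$ along the bijection $\jirr L\isoto\brick\AA$, this already shows that the subsets of $\jirr L$ of the form $\jlabel[a,b]$, ordered by inclusion, are in order-preserving bijection with the sets $\brick\HH$, $\HH\in\theart\AA$, ordered by inclusion.

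The next, and main, point is that a torsion heart is reconstructed from its set of bricks in an order-compatible way: I would show that $\HH\mapsto\brick\HH$ is an order embedding of $\theart\AA$ into $(\mathcal{P}(\brick\AA),\subseteq)$. Monotonicity is clear, so the content is reflection of the order, for which it suffices to prove that every torsion heart satisfies $\HH=\Filt(\brick\HH)$: given this, $\brick\HH\subseteq\brick\HH'$ forces $\HH=\Filt(\brick\HH)\subseteq\Filt(\brick\HH')\subseteq\HH'$, the last inclusion because torsion hearts are closed under extensions (being $\TT\cap\UU^\perp$ with $\TT$ closed under factors and extensions and $\UU^\perp$ closed under subobjects and extensions). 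The identity $\HH=\Filt(\brick\HH)$ is where I expect the real work: one inclusion is extension-closure, and for the other one argues that every nonzero $X\in\HH$ has a filtration with brick subquotients in $\HH$, reducing via closure under direct summands to $X$ indecomposable and then using a radical endomorphism of $X$ to split off a proper quotient lying in $\HH$; making this induction go through is where I would appeal to the structural results on hearts of intervals in \cite{AP, ES} (for wide subcategories the identity $\WW=\Filt(\simp\WW)$ is classical, but extension-closure alone is not enough for general torsion hearts).

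Finally I would assemble the three isomorphisms. By the previous two steps the composite $\HH\mapsto\brick\HH\mapsto\{\,j : B_j\in\brick\HH\,\}=\jlabel[a,b]$ identifies $\theart\AA$ with the poset of subsets of $\jirr L$ of the form $\jlabel[a,b]$ over all intervals $[a,b]$ --- the ``all intervals'' assertion, well-definedness in both directions being exactly what Steps~1 and~2 supply. For the wide case, Theorem~\ref{thm:itv-char} characterizes the wide intervals as precisely those whose heart is a wide subcategory, and $\wide\AA$ is a full subposet of $\theart\AA$; restricting the order embedding of the previous step to $\wide\AA$ therefore identifies it with the poset of $\jlabel[a,b]$ over wide intervals $[a,b]$. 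The ICE case is identical, using that $\ice\AA$ is a full subposet of $\theart\AA$ and that the ICE intervals are exactly those with ICE-closed heart (Theorem~\ref{thm:itv-char}). The hard part throughout is the reconstruction $\HH=\Filt(\brick\HH)$ of a torsion heart from its bricks; the other mildly technical ingredient is the categorical identification of $\kappa$ on $\tors\AA$ used in translation (ii).
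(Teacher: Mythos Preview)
Your proposal is correct and follows essentially the same route as the paper's proof of Theorem~\ref{thm:main}: the commutativity of diagram~\eqref{eq:diagram} there amounts precisely to your translations (i) and (ii) (the latter is Lemma~\ref{lem:kappa-torf}), and the order-embedding $\brick\colon\theart\AA\hookrightarrow \2^{\brick\AA}$ is obtained, as you anticipate, from the identity $\HH=\Filt(\brick\HH)$, which the paper records as Lemma~\ref{lem:heart-brick} and cites from \cite{DIRRT} rather than \cite{AP,ES}. The restriction to wide and ICE intervals via Theorem~\ref{thm:itv-char} is handled identically.
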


We roughly explain why this works. By \cite{DIRRT, BCZ}, there is a bijection between the set of \emph{bricks} in $\AA$ and $\jirr (\tors\AA)$ given by $B \mapsto \TTT(B)$, where $\TTT(B)$ is the smallest torsion class containing $B$.
Then we actually prove that $B \in \HH_{[\UU,\TT]}$ for a brick $B$ if and only if $\TTT(B) \in \jlabel[\UU,\TT]$, hence we can recover bricks contained in each torsion heart.
\begin{example}\label{ex:intro-alg}
  Let $k$ be a field and consider the algebra $\Lambda := k (1 \xleftarrow{b} 2 \xleftarrow{a} 3)/ \la ab \ra$. Then e.g. by using Geuenich's String Applet \cite{string-applet}, one obtain the lattice $L = \tors\Lambda$, which we show in Figure \ref{fig:intro-label}. Here we show the Hasse quiver of $L$, that is, we draw an arrow $x \to y$ if $x$ covers $y$.
  We can check $\jirr L = \{1,2,3,4,5\}$ and $\kappa(i) = \ov{i}$ for $i=1,2,3,4,5$.
  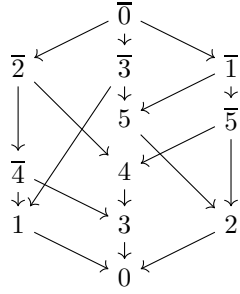
\begin{figure}[htp]
    \centering
    \begin{tikzpicture}
      [every node/.style={}, scale = 0.7]
      \begin{scope}
        \node (0) at (0, 0) {$0$};
        \node (1) at (-2, 1) {$1$};
        \node (3) at (0, 1) {$3$};
        \node (2) at (2, 1) {$2$};
        \node (4) at (0, 2) {$4$};
        \node (44) at (-2, 2) {$\ov{4}$};
        \node (5) at (0, 3) {$5$};
        \node (22) at (-2, 4) {$\ov{2}$};
        \node (55) at (2,3) {$\ov{5}$};
        \node (11) at (2,4) {$\ov{1}$};
        \node (33) at (0,4) {$\ov{3}$};
        \node (00) at (0,5) {$\ov{0}$};
      \end{scope}
  
      \begin{scope}
        \draw[->] (1) to (0);
        \draw[->] (2) to (0);
        \draw[->] (3) to (0);
        \draw[->] (4) to (3);
        \draw[->] (44) to (1);
        \draw[->] (44) to (3);
        \draw[->] (5) to  (2);
        \draw[->] (00) to (11);
        \draw[->] (00) to (22);
        \draw[->] (00) to (33);
        \draw[->] (55) to (2);
        \draw[->] (22) to (44);
        \draw[->] (22) to (4);
        \draw[->] (33) to (5);
        \draw[->] (33) to (1);
        \draw[->] (11) to (55);
        \draw[->] (11) to (5);
        \draw[->] (55) to (4);
      \end{scope}
    \end{tikzpicture}
    \caption{The Hasse quiver of $L := \tors\Lambda$}
    \label{fig:intro-label}
  \end{figure}
  \begin{table}[htp]
    \begin{tabular}{c|c}
      The poset of subcategories
      & The poset of subsets of $\jirr L$ \\ \hline \hline
      $\theart\Lambda$ &
      \makecell{
        $\{ \varnothing, 1, 2, 3, 4, 5, 13, 14, 15, 24, 25, 34, 35,$ \\
        $125,134,135,234,245,1245,2345,12345\}$
      } \\ \hline
      $\wide\Lambda$ &
      $\{\varnothing, 1, 2, 3, 4, 5, 13, 14, 35, 125, 234, 12345\}$
      \\ \hline
      $\ice\Lambda$ &
      \makecell{
        $\{ \varnothing, 1, 2, 3, 4, 5, 13, 14, 25, 34, 35,$ \\
        $125,134,234,2345,12345\}$
      }
    \end{tabular}
    \caption{Example of Theorem \ref{thm:intro-b}}
    \label{table:intro-ex}
  \end{table}

  For example, to compute $\jlabel[3, \ov{2}]$, we check which $j$ satisfies $j \leq \ov{2}$ and $\kappa(j) \geq 3$, and we obtain $\jlabel[3, \ov{2}] = \{1, 4\}$.
  Table \ref{table:intro-ex} shows how $\theart\Lambda$, $\wide\Lambda$, and $\ice\Lambda$ can be realized as posets of sets of join-irreducibles. In the second column, we write $125$ instead of $\{1, 2, 5 \}$ for example.
  Join-irreducibles $1,2,3,4,5$ correspond to bricks $S_1, S_2, S_3, P_3, P_2$ respectively, where $S_i$ and $P_i$ are simple and projective modules corresponding to each vertex $i$. Under this correspondence, the second column can be regarded as posets of bricks contained in each subcategory.
\end{example}

As for $\wide\Lambda$, we give two more simpler descriptions. It is known that there is an injection $\TTT \colon \wide\AA \to \tors\AA$ by taking the smallest torsion class containing each wide subcategory \cite{MS}, and we will recover $\wide\AA$ using this map as follows.

Let $L$ be a completely semidistributive lattice. A \emph{canonical join representation} of $x \in L$ is an expression $x = \bigvee A$ which is ``minimal'' in some sense (Definition \ref{def:cjr}), and denote by $L_0$ the set of elements of $L$ with canonical join representations. If $L$ is finite, then $L = L_0$ holds.
We note that our definition of canonical join representations is slightly different from the definition in \cite{BCZ, BTZ} when $L$ is infinite, see Remark \ref{rem:CJR-def}.

Then we give two poset structures on $L_0$, the \emph{kappa order} and the \emph{core label order}.
For $x \in L_0$ with a canonical join representation $x = \bigvee A$, Barnard--Todorov--Zhu introduced the \emph{extended kappa map $\exk(x)$} as follows, thereby obtaining a map $\exk \colon L_0 \to L$:
\[
  \exk(x) = \bigwedge \{\kappa(a) \mid a \in A\}.
\]
Using this map, we define the \emph{kappa order $\leq_\kappa$} on $L_0$ as follows:
\[
  x \leq_\kappa y : \Longleftrightarrow \text{$x \leq y$ and $\exk(x) \geq \exk(y)$}.
\]
On the other hand, we define the \emph{core label order $\leq_\clo$} on $L_0$ as follows:
\[
  x \leq_\clo y : \Longleftrightarrow \jlabel[x_\downarrow, x] \subseteq \jlabel[y_\downarrow, y],
\]
where $x_\downarrow := x \wedge \bigwedge \{ x' \in L \mid x' \covd x \}$.
This core label order is a generalization of that for finite congruence-uniform lattices (also known as the \emph{shard intersection order}), which were originally introduced for finite Coxeter groups by Reading \cite{reading-shard} and are studied in \cite{gm-flip,biclosed, muhle, reading-region}.

Now we can state our second main result of this paper.
\begin{theoremi}\label{thm:intro-c}
  Let $\AA$ be an abelian length category and put $L:=\tors\AA$. Then $\leq_\kappa$ and $\leq_\clo$ on $L_0$ coincide, and the map $\TTT \colon \wide\AA \to \tors\AA$ induces a poset isomorphism
  \[
    \wide\AA \isoto (L_0, \leq_\kappa) = (L_0, \leq_\clo).
  \]
\end{theoremi}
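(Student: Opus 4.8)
The plan is to route everything through Theorem~\ref{thm:intro-b} together with the bijection $\TTT\colon\wide\AA\to L_0$, $\WW\mapsto\TTT(\WW)$, sending a wide subcategory to the smallest torsion class containing it (whose construction occupies the earlier sections; cf.\ \cite{MS}); for $x\in L_0$ write $\WWW(x)$ for the wide subcategory with $\TTT(\WWW(x))=x$. First I would record a purely lattice-theoretic fact, valid for any completely semidistributive lattice $L$: for $x,y\in L_0$, one has $x\le_\kappa y$ if and only if $\jlabel[\exk(x),x]\subseteq\jlabel[\exk(y),y]$. The forward implication is immediate from the monotonicity of $\jlabel$ in its two arguments. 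For the converse, if $A$ is the canonical join representation of $x$, then every $a\in A$ satisfies $a\le x$ and $\kappa(a)\ge\bigwedge_{a'\in A}\kappa(a')=\exk(x)$, so $A\subseteq\jlabel[\exk(x),x]$; as $\jlabel[\exk(x),x]$ is also contained in $\{j\in\jirr L\mid j\le x\}$, taking joins gives $\bigvee\jlabel[\exk(x),x]=x$, while the defining condition $\kappa(j)\ge\exk(x)$ for $j\in\jlabel[\exk(x),x]$ together with $A\subseteq\jlabel[\exk(x),x]$ gives $\bigwedge\{\kappa(j)\mid j\in\jlabel[\exk(x),x]\}=\exk(x)$. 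The same holds for $y$, so the assumed containment of label sets yields both $x\le y$ and $\exk(x)\ge\exk(y)$, i.e.\ $x\le_\kappa y$.

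The crux is then: for $L=\tors\AA$ and $x\in L_0$, the two core label sets agree, $\jlabel[\exk(x),x]=\jlabel[(x)_\downarrow,x]$, and both equal $\{\TTT(B)\mid B\text{ a brick in }\WWW(x)\}$. One inclusion is formal: a general lattice computation (using that the lower cover $x_a\covd x$ carrying the canonical joinand $a$ satisfies $x\wedge\kappa(a)=x_a$) shows $(x)_\downarrow=x\wedge\exk(x)\le\exk(x)$, whence $\jlabel[\exk(x),x]\subseteq\jlabel[(x)_\downarrow,x]$. To describe $\jlabel[(x)_\downarrow,x]$ I would show, via Theorem~\ref{thm:itv-char} and the results of \cite{AP,ES}, that $[(x)_\downarrow,x]$ is a wide interval with heart $\WWW(x)$, so that $\jlabel[(x)_\downarrow,x]=\{\TTT(B)\mid B\in\WWW(x)\}$ by Theorem~\ref{thm:intro-b}; and, via the brick interpretation of the join-irreducible labelling of the Hasse quiver (Theorem~\ref{thm:jlabel-label}, cf.\ \cite{DIRRT,BTZ}), that the canonical joinands of $x$ are exactly $\{\TTT(B)\mid B\in\simp\WWW(x)\}$, the simple objects of that heart. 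The remaining inclusion $\jlabel[(x)_\downarrow,x]\subseteq\jlabel[\exk(x),x]$ is where the categorical hypothesis genuinely enters. Take $j=\TTT(B)$ with $B$ a brick in $\WWW(x)$, and pick a simple object $B_0$ of $\WWW(x)$ that is a quotient of $B$ in $\WWW(x)$, hence in $\AA$ (the top of $B$ in the length category $\WWW(x)$ is nonzero semisimple); then $\TTT(B_0)$ is a canonical joinand of $x$. Using the description $\kappa(\TTT(B'))=\max\{\TT\in\tors\AA\mid B'\notin\TT\}$, the torsion class $\exk(x)=\bigwedge_{B'\in\simp\WWW(x)}\kappa(\TTT(B'))$ contains no simple object of $\WWW(x)$, in particular $B_0\notin\exk(x)$; since $\exk(x)$ is closed under quotients, $B\notin\exk(x)$, and therefore $\exk(x)\le\kappa(\TTT(B))=\kappa(j)$, i.e.\ $j\in\jlabel[\exk(x),x]$.

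With this the theorem assembles quickly. Given $\WW_1,\WW_2\in\wide\AA$, set $x_i=\TTT(\WW_i)$, so $\WWW(x_i)=\WW_i$. By Theorem~\ref{thm:intro-b}, $\WW_1\subseteq\WW_2$ holds iff $\{\TTT(B)\mid B\in\WW_1\}\subseteq\{\TTT(B)\mid B\in\WW_2\}$, which by the crux is $\jlabel[(x_1)_\downarrow,x_1]\subseteq\jlabel[(x_2)_\downarrow,x_2]$, i.e.\ $x_1\le_\clo x_2$; and since $\jlabel[(x_i)_\downarrow,x_i]=\jlabel[\exk(x_i),x_i]$, the first step turns this into $x_1\le_\kappa x_2$. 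As $\TTT$ is a bijection onto $L_0$, this shows at once that $\le_\kappa$ and $\le_\clo$ agree on $L_0$ and that $\TTT$ is a poset isomorphism $\wide\AA\isoto(L_0,\le_\kappa)=(L_0,\le_\clo)$.

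The main obstacle is the inclusion $\jlabel[(x)_\downarrow,x]\subseteq\jlabel[\exk(x),x]$: for an arbitrary completely semidistributive lattice one only ever gets $(x)_\downarrow\le\exk(x)$ and hence the opposite inclusion, so the coincidence of the two orders is genuinely a feature of $\tors\AA$, to be extracted from the exactness in an abelian length category by the ``top of a brick'' argument above. A secondary point needing care is the bookkeeping identifying $[(x)_\downarrow,x]$ as a wide interval with heart $\WWW(x)$ and the canonical joinands of $x$ with $\simp\WWW(x)$; these should follow from Theorem~\ref{thm:itv-char}, the Hasse-labelling description, and the $\wide$--$\tors$ correspondence, but carefully relating $(x)_\downarrow$, $\exk(x)$, and the various intervals whose heart is $\WWW(x)$ is the delicate part.
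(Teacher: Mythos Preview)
Your argument has a concrete error at the step you yourself flag as ``the main obstacle.'' The description $\kappa(\TTT(B'))=\max\{\TT\in\tors\AA\mid B'\notin\TT\}$ is false: for $Q\colon 1\to 2$ and the brick $B'=S_2$, the torsion class $\Fac P_1=\add\{P_1,S_1\}$ does not contain $S_2$ yet is not contained in ${}^\perp S_2=\add S_1=\kappa(\TTT(S_2))$. Consequently the implication ``$B\notin\exk(x)$, therefore $\exk(x)\le\kappa(\TTT(B))$'' does not follow. The whole ``top of a brick'' detour is unnecessary: the correct description is $\kappa(\TTT(B))={}^\perp B$ (Theorem~\ref{thm:btz}), and by Theorem~\ref{thm:ext-kappa-bij} one has $\exk(x)={}^\perp\WWW(x)$; since $B\in\WWW(x)$, trivially ${}^\perp\WWW(x)\subseteq{}^\perp B$, i.e.\ $\exk(x)\le\kappa(\TTT(B))$. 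This one-line replacement repairs your proof.

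With that fix, your route is genuinely different from the paper's and in one respect cleaner. The paper proves the two isomorphisms separately: for $\leq_\clo$ it uses the heart description of $[x_\downarrow,x]$ exactly as you do, but for $\leq_\kappa$ it argues directly that $\TTT(\WW_1)\subseteq\TTT(\WW_2)$ and $\FFF(\WW_1)\subseteq\FFF(\WW_2)$ imply $\WW_1\subseteq\WW_2$, invoking the nontrivial Lemma~\ref{lem:ie-closed} ($\CC=\TTT(\CC)\cap\FFF(\CC)$ for image-and-extension closed $\CC$). Your approach instead establishes the hypothesis of Proposition~\ref{prop:coincide-condition}, namely $\jlabel[x_\downarrow,x]=\jlabel[\exk(x),x]$, and so obtains $\leq_\kappa=\leq_\clo$ and both isomorphisms simultaneously; once corrected as above, this bypasses Lemma~\ref{lem:ie-closed} entirely. (The paper even remarks after Proposition~\ref{prop:coincide-condition} that one can verify its hypothesis for $\tors\AA$ ``by using Lemma~\ref{lem:ie-closed}''; your corrected argument shows that lemma is not needed here.) The trade-off is that the paper's Lemma~\ref{lem:ie-closed} is a statement of independent interest about arbitrary image-extension-closed subcategories, while your argument stays closer to the brick/$\jlabel$ bookkeeping already set up for Theorem~\ref{thm:intro-b}.
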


\begin{example}
  Consider the algebra $\Lambda$ and $L = \tors\Lambda$ in Example \ref{ex:intro-alg} again. Then the orbit of $\exk$ is given by $1 \mapsto \ov{1} \mapsto 2 \mapsto \ov{2}\mapsto 3 \mapsto \ov{3} \mapsto 4 \mapsto \ov{4} \mapsto 5 \mapsto \ov{5} \mapsto 1$ and $0 \mapsto \ov{0} \mapsto 0$. We can check that the kappa order and the shard intersection order on $L_0 = L$ coincide, and the Hasse diagram is given in Figure \ref{fig:intro-wide}.
  \begin{figure}[htp]
    \begin{tikzpicture}
      \node (0) at (0,0) {$0$};
      \node (1) at (-2,1) {$1$};
      \node (2) at (-1,1) {$2$};
      \node (3) at (0,1) {$3$};
      \node (4) at (1,1) {$4$};
      \node (5) at (2,1) {$5$};
      \node (11) at (2,2) {$\ov{1}$};
      \node (22) at (1,2) {$\ov{2}$};
      \node (33) at (0,2) {$\ov{3}$};
      \node (44) at (-1,2) {$\ov{4}$};
      \node (55) at (-2,2) {$\ov{5}$};
      \node (00) at (0,3) {$\ov{0}$};

      \draw (1) -- (0);
      \draw (2) -- (0);
      \draw (3) -- (0);
      \draw (4) -- (0);
      \draw (5) -- (0);
      \draw (11) -- (3);
      \draw (11) -- (5);
      \draw (22) -- (1);
      \draw (22) -- (4);
      \draw (33) -- (1);
      \draw (33) -- (2);
      \draw (33) -- (5);
      \draw (44) -- (1);
      \draw (44) -- (3);
      \draw (55) -- (2);
      \draw (55) -- (3);
      \draw (55) -- (4);
      \draw (00) -- (11);
      \draw (00) -- (22);
      \draw (00) -- (33);
      \draw (00) -- (44);
      \draw (00) -- (55);
    \end{tikzpicture}
    \caption{$\wide\Lambda \iso (L, \leq_\kappa) = (L, \leq_\clo)$}
    \label{fig:intro-wide}
  \end{figure}
\end{example}

As a byproduct, we obtain the following alternative description of Reading's original shard intersection on a finite Coxeter group order using the extended kappa map.
\begin{corollaryi}[= Proposition \ref{prop:coxeter-shard}]
  Let $W$ be a finite Coxeter group.
  Then the shard intersection order $\preceq$ \cite{reading-shard} on $W$ coincides with the kappa order $\leq_\kappa$ with respect to the right weak order $\leq$ on $W$, that is, $x \preceq y$ if and only if $x \leq y$ and $\exk(x) \geq \exk(y)$.
\end{corollaryi}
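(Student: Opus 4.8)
The plan is to factor the claimed identity $\preceq\,=\,\leq_\kappa$ through the core label order: I would first show that on the weak order $L := (W,\leq)$ the shard intersection order $\preceq$ coincides with $\leq_\clo$, and then invoke Theorem \ref{thm:intro-c} to replace $\leq_\clo$ by $\leq_\kappa$. The preliminary remark is that the weak order on a finite Coxeter group is a finite congruence-uniform lattice, in particular finite semidistributive, so that $L_0 = L = W$ as sets and $\exk$, $\leq_\kappa$, $\leq_\clo$ are all defined on $W$.

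The first step, $\preceq\,=\,\leq_\clo$, is essentially a known identification together with a compatibility check. Reading's shard intersection order on the weak order is the core label order of the (finite congruence-uniform) weak order: it sends $w\in W$ to the set of join-irreducibles labelling the interval $[w_\downarrow,w]$, which is exactly $\jlabel[w_\downarrow,w]$ once one identifies the shard labeling of the Hasse quiver of $L$ with the canonical join-irreducible labeling (using Theorem \ref{thm:jlabel-label}) and checks that the element $w_\downarrow$ defined here is Reading's ``core''. This is precisely the assertion, built into this paper's terminology, that $\leq_\clo$ specializes on congruence-uniform lattices to the classical core label order; the only genuine nuisance is bookkeeping conventions (left vs.\ right weak order, inversion sets vs.\ cover reflections), which I would settle once at the outset.

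The second step, $\leq_\kappa\,=\,\leq_\clo$ on $L$, is where the work lies. When $W$ is crystallographic, $L=(W,\leq)$ is isomorphic to $\tors\AA$ for a suitable abelian length category $\AA$ — for instance $\AA=\mod\Pi$ with $\Pi$ the preprojective algebra, cf.\ Mizuno — so Theorem \ref{thm:intro-c} applies verbatim and yields $\leq_\kappa=\leq_\clo$ (and, as a bonus, identifies both with $\wide\Pi$, re-proving Thomas's theorem). For the non-crystallographic groups $I_2(m)$ with $m=5$ or $m\ge 7$, $H_3$, and $H_4$ there is no such model, and I expect this to be the main obstacle: one must prove $\leq_\kappa=\leq_\clo$ for these lattices without the brick--join-irreducible dictionary that powers the proof of Theorem \ref{thm:intro-c}. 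The dihedral lattices $(I_2(m),\leq)$ are $2m$-element ``polygons'' on which $\kappa$, $\exk$ and $\jlabel$ are completely explicit, so the two orders can be compared directly; $H_3$ and $H_4$ then reduce to a finite computation. The cleaner route, which I would attempt first, is to extract from the proof of Theorem \ref{thm:intro-c} the precise lattice-theoretic hypothesis on $L$ that it actually uses and to verify that every finite weak order satisfies it, which would absorb the case analysis and make the corollary a formal consequence. Combining the two steps then gives $\preceq\,=\,\leq_\clo\,=\,\leq_\kappa$.
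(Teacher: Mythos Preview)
Your proposal is correct and matches the paper's proof almost exactly: reduce to $\leq_\kappa = \leq_\clo$, handle crystallographic types via a preprojective-algebra model of $(W,\leq)$ as $\tors\AA$, do $I_2(n)$ by hand, and for $H_3$, $H_4$ verify by computer the sufficient lattice-theoretic criterion of Proposition~\ref{prop:coincide-condition} --- which is precisely the ``cleaner route'' you propose. The only refinement is that Mizuno's result covers only the simply-laced case, so for the remaining crystallographic types the paper invokes Fu--Geng \cite{FG} and the generalized preprojective algebras of Gei\ss--Leclerc--Schr\"oer \cite{GLS}.
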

We remark that the kappa order and the core label order do not coincide in general even for finite congruence-uniform lattices, see Example \ref{ex:not-same}.

\addtocontents{toc}{\SkipTocEntry}
\subsection*{Computer program}
Since our results are purely combinatorial, one can do experiments in computer.
The author developed such a program \cite{program} on SageMath \cite{sage}, which computes various objects including $\wide\Lambda$, $\ice\Lambda$, and $\theart\Lambda$ if $\tors\Lambda$ is inputted (where we assume that $\tors\Lambda$ is finite).
For example, combining this program with Geuenich's String Applet \cite{string-applet}, one can compute the above things for any representation-finite special biserial algebra $\Lambda$.

\addtocontents{toc}{\SkipTocEntry}
\subsection*{Organization}
This paper is organized as follows.
In Section \ref{sec:2}, we collect basic results in lattice theory and representation theory of algebras which we use throughout this paper.
In Section \ref{sec:3}, we introduce the map $\jlabel$ and torsion hearts, then we prove Theorem \ref{thm:intro-b}.
In Section \ref{sec:4}, we first study the basics of canonical join representations, and discuss the relation between wide subcategories, bricks and canonical join representations of torsion classes. Then we introduce the kappa order and the core label order, and prove Theorem \ref{thm:intro-c}.

\addtocontents{toc}{\SkipTocEntry}
\subsection*{Conventions and notation}
Throughout this paper, \emph{we assume that all categories are skeletally small}, that is, the isomorphism classes of objects form a set. In addition, \emph{all subcategories are assumed to be full and closed under isomorphisms}. 
For an artinian ring $\Lambda$, we denote by $\mod\Lambda$ the category of finitely generated right $\Lambda$-modules.

\section{Preliminaries}\label{sec:2}
In this section, we give some background and tools on lattice theory and representation theory.
Although the material in this section is not new and can be found in e.g. \cite{BCZ, BTZ, DIRRT, RST,thomas} for the case of finite-dimensional algebras, we provide some short proofs in the setting of abelian length categories to make this paper self-contained and to use later.

First, we introduce some terminology.
Let $P$ be a poset. Then its \emph{Hasse quiver} $\Hasse P$ is the quiver defined as follows: The vertex set is $P$, and we draw an arrow $a \to b$ if $a > b$ and there is no $x$ in $P$ satisfying $a > x > b$. In this case, we say that \emph{$a$ covers $b$} and write $a \cov b$. We denote by $\Hasse_1 P$ the set of arrows in $\Hasse P$, and its element is called a \emph{Hasse arrow of $P$}.

A poset $L$ is called a \emph{complete lattice} if each subset $X \subseteq L$ has a least upper bound $\bigvee X$ and a greatest lower bound $\bigwedge X$. In particular, a complete lattice has the greatest element $\bigwedge \varnothing$ and the least element $\bigvee \varnothing$.

\subsection{The kappa map and the join-irreducible labeling}
In this subsection, we recall basics of completely semidistributive lattices which will be used throughout this paper.

\begin{definition}\label{def:jirr}
  Let $L$ be a complete lattice.
  \begin{enumerate}
    \item An element $j \in L$ is called \emph{completely join-irreducible} if $j = \bigvee X$ for some subset $X \subseteq L$ implies $j \in X$.
    \item Dually, an element $m \in L$ is called \emph{completely meet-irreducible} if $m = \bigwedge X$ for some subset $X \subseteq L$ implies $m \in X$.
  \end{enumerate}
  We denote by $\jirr L$ (resp. $\mirr L$) the set of completely join-irreducible (resp. completely meet-irreducible) elements of $L$.
\end{definition}
It is convenient to use the following notation when we consider join-irreducibles and meet-irreducibles.
\begin{definition}
  Let $L$ be a complete lattice and $a \in L$. We define $a_*$ and $a^*$ as follows.
  \begin{align*}
    a_* &:= \bigvee \{ x \in L \mid x < a \} \\
    a^* &:= \bigwedge \{ x \in L \mid x > a \}
  \end{align*}
\end{definition}
It is easily verified that $j \in L$ is completely join-irreducible if and only if $j \neq j_*$, and in this case, $j_*$ is a maximum element below $j$ and is the unique element covered by $j$. Dually, $m \in L$ is completely meet-irreducible if and only if $m^* \neq m$, and in this case, $m^*$ is a minimum element above $m$ and is the unique element which covers $m$.

Now let us recall \emph{completely semidistributive lattices}, which provide the framework of our study of the lattice of torsion classes.
\begin{definition}\label{def:semidist}
  A lattice $L$ is called \emph{completely semidistributive} if it is a complete lattice and satisfies the following conditions:
  \begin{enumerate}
    \item For $a, b \in L$ and $X \subseteq L$, if $a \vee x = b$ for every $x \in X$, then $a \vee (\bigwedge X) = b$ holds.
    \item For $a,b \in L$ and $X \subseteq L$, if $a \wedge x = b$ for every $x \in X$, then $a \wedge (\bigvee X) = b$ holds.
  \end{enumerate}
  In addition, if $L$ is a finite lattice, then we simply call $L$ a \emph{finite semidistributive lattice}.
\end{definition}
To each Hasse arrow of a completely semidistributive lattice, we can associate a completely join-irreducible element and a completely meet-irreducible element as follows. 
\begin{proposition}[{\cite[Proposition 9.1]{thomas}, \cite[Lemma 3.7]{RST}}]\label{prop:jirr-label}
  Let $L$ be a completely semidistributive lattice and $a \to b$ a Hasse arrow of $L$. Then the following hold.
  \begin{enumerate}
    \item $\{ x \in L \mid b \vee x = a \}$ has a minimum element, which is completely join-irreducible.
    \item $\{ x \in L \mid a \wedge x = b \}$ has a maximum element, which is completely meet-irreducible.
  \end{enumerate}
  Thus we obtain the following two maps $\gamma$ and $\mu$.
  \[\begin{tikzcd}[row sep = 0]
    \jirr L & \Hasse_1 L \lar["\gamma"'] \rar["\mu"] & \mirr L, \\
    \min \{ x \in L \mid b \vee x = a \}
    & (a \to b) \lar[mapsto] \rar[mapsto]
    & \max \{ x \in L \mid a \wedge x = b \}.
  \end{tikzcd}\]
\end{proposition}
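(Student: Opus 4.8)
The plan is to prove statement (1) directly and then deduce (2) by passing to the order-dual lattice $L^{\op}$: the two conditions in Definition \ref{def:semidist} are interchanged under order-duality, so $L^{\op}$ is again completely semidistributive; a Hasse arrow $a \to b$ of $L$ becomes a Hasse arrow $b \to a$ of $L^{\op}$; the completely join-irreducibles of $L^{\op}$ are exactly the completely meet-irreducibles of $L$; and the set $\{x \mid b \vee x = a\}$ of (1), computed in $L^{\op}$, is precisely the set $\{x \mid a \wedge x = b\}$ of (2). Thus (2) is just (1) applied to $L^{\op}$.

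For (1), fix a Hasse arrow $a \to b$, so $a$ covers $b$, and set $S := \{ x \in L \mid b \vee x = a \}$. First I observe that $S$ is nonempty, since $b \vee a = a$ gives $a \in S$. Next I claim that $j := \bigwedge S$ belongs to $S$, which then exhibits it as the minimum of $S$. Indeed every $x \in S$ satisfies $b \vee x = a$ by definition, so condition (1) of complete semidistributivity applied with the subset $X = S$ gives $b \vee (\bigwedge S) = a$, i.e.\ $b \vee j = a$ and hence $j \in S$. This is the only step where the semidistributivity hypothesis enters, and recognizing that $X = S$ itself is the set to feed into condition (1) is the single (minor) point requiring care.

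It remains to verify that $j$ is completely join-irreducible in the sense of Definition \ref{def:jirr}. Suppose $j = \bigvee Y$ for some $Y \subseteq L$. Since $b \vee j = a \neq b$ we have $j \not\leq b$; in particular $Y \neq \varnothing$, and some $y_0 \in Y$ satisfies $y_0 \not\leq b$, for otherwise $j = \bigvee Y \leq b$. Then $b < b \vee y_0 \leq b \vee j = a$, and because $a$ covers $b$ this forces $b \vee y_0 = a$, so $y_0 \in S$. Consequently $j = \bigwedge S \leq y_0 \leq \bigvee Y = j$, whence $y_0 = j$ and $j \in Y$. Thus $j$ is completely join-irreducible, which finishes (1); applying this to $L^{\op}$ gives (2), and the maps $\gamma$ and $\mu$ are then defined by sending each Hasse arrow to the minimum, resp.\ maximum, element produced above. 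Overall there is no substantial obstacle: the content of the proposition is essentially a repackaging of the definition of complete semidistributivity together with the covering relation.
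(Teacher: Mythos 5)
Your proof is correct and follows essentially the same route as the paper's: both obtain the minimum $j=\bigwedge S$ directly from condition (1) of complete semidistributivity, and both verify complete join-irreducibility by locating some $y\in Y$ with $b\vee y=a$ (using that $a$ covers $b$) and then squeezing $j\leq y\leq\bigvee Y=j$. Your explicit reduction of (2) to (1) via the order-dual lattice is exactly the duality the paper leaves implicit.
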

\begin{proof}
  We only prove (1). Put $X := \{ x \in L \mid b \vee x = a \}$. Then $j := \bigwedge X$ belongs to $X$ by complete semidistributivity, and $j$ is clearly a minimum element of $X$.

  We will show that $j$ is completely join-irreducible. Suppose $j = \bigvee Y$ for some $Y \subseteq L$. Then for each $y \in Y$, we have $b \leq b \vee y \leq b \vee (\bigvee Y) = b \vee j = a$, thus either $b \vee y = b$ or $b \vee y = a$ holds. Since $b \vee (\bigvee Y) = a$ holds, there exists some $y \in Y$ satisfying $b \vee y = a$, thus $y \in X$. Since $j$ is the minimum element of $X$, it follows that $j \leq y$ holds. Thus $j \leq y \leq \bigvee Y = j$ holds, which implies $j = y \in Y$. 
\end{proof}
Therefore, we have the following two arrow labelings on $\Hasse L$.
\begin{definition}\label{def:label}
  Let $L$ be a completely semidistributive lattice. We define the \emph{join-irreducible labeling $\gamma \colon \Hasse_1 L \to \jirr L$} and the \emph{meet-irreducible labeling $\mu \colon \Hasse_1 L \to \mirr L$} as in Proposition \ref{prop:jirr-label}.
\end{definition}

\begin{example}
  Consider a lattice $L$ in Example \ref{ex:intro-alg}. Then Figure \ref{fig:label-ex} shows the join-irreducible labeling $\gamma$ on the Hasse quiver of $L$.
  \begin{figure}[htp]
    \centering
    \begin{tikzpicture}
      [every node/.style={inner sep=0.3pt}, scale = 0.8]
      \begin{scope}
        \node (0) at (0, 0) {$0$};
        \node (1) at (-2, 1) {$1$};
        \node (3) at (0, 1) {$3$};
        \node (2) at (2, 1) {$2$};
        \node (4) at (0, 2) {$4$};
        \node (44) at (-2, 2) {$\ov{4}$};
        \node (5) at (0, 3) {$5$};
        \node (22) at (-2, 4) {$\ov{2}$};
        \node (55) at (2,3) {$\ov{5}$};
        \node (11) at (2,4) {$\ov{1}$};
        \node (33) at (0,4) {$\ov{3}$};
        \node (00) at (0,5) {$\ov{0}$};
      \end{scope}
  
      \begin{scope}[every node/.style={blabel}]
        \draw[->] (1) to node {$1$} (0);
        \draw[->] (2) to node {$2$} (0);
        \draw[->] (3) to node {$3$} (0);
        \draw[->] (4) to node {$4$} (3);
        \draw[->] (44) to node {$3$} (1);
        \draw[->] (44) to node {$1$} (3);
        \draw[->] (5) to node[near end] {$5$} (2);
        \draw[->] (00) to node {$1$} (11);
        \draw[->] (00) to node {$2$} (22);
        \draw[->] (00) to node {$3$} (33);
        \draw[->] (55) to node {$3$} (2);
        \draw[->] (22) to node {$4$} (44);
        \draw[->] (22) to node[near start] {$1$} (4);
        \draw[->] (33) to node {$1$} (5);
        \draw[->] (33) to node {$2$} (1);
        \draw[->] (11) to node {$5$} (55);
        \draw[->] (11) to node {$3$} (5);
        \draw[->] (55) to node {$2$} (4);

      \end{scope}
    \end{tikzpicture}
    \caption{The join-irreducible labeling on $L$}
    \label{fig:label-ex}
  \end{figure}
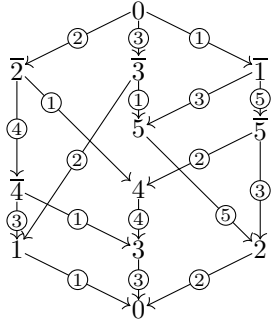  
\end{example}

The following characterization of these labelings is useful.
\begin{lemma}\label{lem:label-char}
  Let $L$ be a completely semidistributive lattice and $a \to b$ a Hasse arrow of $L$. Then the following hold.
  \begin{enumerate}
    \item An element $j$ of $L$ satisfies $\gamma(a \to b) = j$ if and only if $b \vee j = a$ and $b \wedge j = j_*$ hold.
    \item An element $m$ of $L$ satisfies $\mu(a \to b) = m$ if and only if $a \wedge m = b$ and $a \vee m = m^*$ hold.
  \end{enumerate}
  \[
    \begin{tikzpicture}[xscale = 0.8, yscale = -0.6]
      \node (mm) at (1,0) {$m^*$};
      \node (m) at (0,1) {$m$};
      \node (a) at (2,1) {$a$};
      \node (b) at (1,2) {$b$};
      \node (j) at (3,2) {$j$};
      \node (jj) at (2,3) {$j_*$};

      \draw[->] (mm) -- (m);
      \draw[->] (a) -- (b);
      \draw[->] (j) to (jj);
      \draw[dashed] (mm) -- (a) -- (j);
      \draw[dashed] (m) -- (b) -- (jj);
    \end{tikzpicture}
    \]
\end{lemma}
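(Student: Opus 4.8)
The plan is to establish (1) directly from the description $\gamma(a\to b)=\min\{x\in L\mid b\vee x=a\}$ in Proposition \ref{prop:jirr-label}, and then to obtain (2) by the order-dual argument (interchanging $\vee$ and $\wedge$, replacing $j_*$ by $m^*$, and exchanging ``covered by'' with ``covers''). Write $X:=\{x\in L\mid b\vee x=a\}$, so that $\gamma(a\to b)=\bigwedge X\in X$ and this element is completely join-irreducible.

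For the ``only if'' direction, suppose $\gamma(a\to b)=j$. Then $j\in X$ gives $b\vee j=a$ at once, so it remains to check $b\wedge j=j_*$. Since $b\vee j=a>b$ we cannot have $j\le b$, hence $b\wedge j<j$, and therefore $b\wedge j\le j_*$ because $j_*$ is the greatest element strictly below $j$. For the reverse inequality I would use that $a$ covers $b$: we have $b\le b\vee j_*\le b\vee j=a$, so $b\vee j_*\in\{b,a\}$; if $b\vee j_*=a$ then $j_*\in X$, forcing $j\le j_*$ by minimality of $j$ in $X$, which contradicts $j_*<j$. Hence $b\vee j_*=b$, i.e.\ $j_*\le b$, and combined with $j_*\le j$ this yields $j_*\le b\wedge j$. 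So $b\wedge j=j_*$.

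For the ``if'' direction, suppose $b\vee j=a$ and $b\wedge j=j_*$. First note that $j$ is automatically completely join-irreducible: if it were not, then $j_*=j$, whence $j=b\wedge j\le b$, contradicting $b\vee j=a>b$; so $j_*<j$, and $j_*$ is the unique element covered by $j$. Now $b\vee j=a$ says $j\in X$, so $j':=\gamma(a\to b)=\bigwedge X$ satisfies $j'\le j$. If $j'<j$, then $j'\le j_*$ (again because $j_*$ is the greatest element below $j$), and $j_*=b\wedge j\le b$, so $j'\le b$, contradicting $b\vee j'=a>b$. Therefore $j'=j$, i.e.\ $\gamma(a\to b)=j$.

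I do not expect a genuine obstacle here; the argument is short and uses only the covering relation $a\cov b$, the minimality built into the definition of $\gamma$, and the elementary fact that $j_*$ is the greatest element below a completely join-irreducible $j$. The two points that need a little care are (i) deriving $j_*\le b$ in the ``only if'' part, where one must invoke that $a$ covers $b$ to force $b\vee j_*\in\{b,a\}$, and (ii) observing in the ``if'' part that the hypothesis $b\wedge j=j_*$ already forces $j$ to be completely join-irreducible, so that $j_*$ behaves as expected; neither of these requires the full strength of complete semidistributivity, which enters only through Proposition \ref{prop:jirr-label} guaranteeing that $\gamma$ is well defined.
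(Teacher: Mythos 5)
Your proof is correct and follows essentially the same route as the paper's: both directions rest on the characterization $\gamma(a\to b)=\min\{x\mid b\vee x=a\}$, the covering relation $a\cov b$ forcing $b\vee j_*\in\{b,a\}$, and the fact that $j_*$ is the greatest element strictly below $j$; part (2) is handled by duality in both. Your extra remark that the hypotheses in the ``if'' direction already force $j$ to be completely join-irreducible is a harmless refinement that the paper leaves implicit.
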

\begin{proof}
  We only prove (1), since (2) follows from duality.
  Suppose that $j$ satisfies $\gamma(a \to b) = j$. By the definition of $\gamma$, we have $b \vee j = a$. Since $j_* < j$, we have $b \vee j_* \neq a$ by the minimality of $j$, thus $b \leq b \vee j_* < b \vee j = a$, which implies $b \vee j_* = b$, or equivalently, $j_* \leq b$. Thus we have $j_* \leq b \wedge j \leq j$.
  If $b \wedge j = j$, then we have $j \leq b$, thus $b \vee j = b \neq a$ holds, which is a contradiction. Thus $b \wedge j \neq j$, hence $b \wedge j = j_*$.

  Conversely, suppose that $b \vee j = a$ and $b \wedge j = j_*$ hold. The first equality shows that $j$ belongs to the set $X := \{ x \in L \mid b \vee x = a \}$. We claim that $j$ is a minimal element of $X$. Indeed, if $j' < j$, then $j' \leq j_* \leq b$ by $b \wedge j = j_*$, hence $b \vee j' = b \neq a$. Since $X$ has the minimum element $\gamma(a \to b)$, we should have $j = \gamma (a \to b)$.
\end{proof}

Now we are ready to define the \emph{kappa map} $\kappa \colon \jirr L \to \mirr L$ by using these labelings, which plays a central role in this paper.
\begin{definition}\label{def:kappa}
  Let $L$ be a completely semidistributive lattice. Then define a map $\kappa \colon \jirr L \to \mirr L$ and $\kappa^d \colon \mirr L \to \jirr L$ as follows:
  \begin{align*}
    \kappa(j) &:= \mu(j \to j_*) = \max \{ x \in L \mid j \wedge x = j_* \}, \\
    \kappa^d(m) &:= \gamma(m^* \to m) = \min \{ x \in L \mid m \vee x = m^* \}.
  \end{align*}
\end{definition}
Now we have the following basic property, which says that $\kappa$ is bijective and two labelings $\gamma$ and $\mu$ coincide up to this bijection.
\begin{theorem}[{\cite[Proposition 9.2, Theorem 9.3]{thomas}}]\label{thm:kappa-bij}
  Let $L$ be a completely semidistributive lattice. Then the following hold.
  \begin{enumerate}
    \item $\kappa$ and $\kappa^d$ are mutually inverse bijections between $\jirr L$ and $\mirr L$.
    \item For each Hasse arrow $a \to b$, we have $\mu(a \to b) = \kappa(\gamma(a \to b))$.
    \item For every $j \in \jirr L$, we have $j \vee \kappa(j) = \kappa(j)^*$ and $j \wedge \kappa(j) = j_*$.
  \end{enumerate}
\end{theorem}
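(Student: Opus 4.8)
The plan is to derive all three statements from the characterization of the labelings $\gamma$ and $\mu$ in Lemma \ref{lem:label-char}, together with the definitions $\kappa(j) = \mu(j \to j_*)$ and $\kappa^d(m) = \gamma(m^* \to m)$, carrying out the proof in the order (3), (1), (2). Statement (3) is essentially immediate: since $j \in \jirr L$, the arrow $j \to j_*$ is a Hasse arrow, so applying the ``only if'' direction of Lemma \ref{lem:label-char}(2) to this arrow with $m := \mu(j \to j_*) = \kappa(j)$ gives at once $j \wedge \kappa(j) = j_*$ and $j \vee \kappa(j) = \kappa(j)^*$.

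For (1), I would show $\kappa^d \circ \kappa = \id$ on $\jirr L$ and $\kappa \circ \kappa^d = \id$ on $\mirr L$. Given $j \in \jirr L$, put $m := \kappa(j) \in \mirr L$; then $m^* \to m$ is a Hasse arrow, and by (3) we have $m \vee j = m^*$ and $m \wedge j = j_*$, so the ``if'' direction of Lemma \ref{lem:label-char}(1) yields $\gamma(m^* \to m) = j$, that is, $\kappa^d(\kappa(j)) = j$. Dually, given $m \in \mirr L$, put $j := \kappa^d(m) = \gamma(m^* \to m)$; Lemma \ref{lem:label-char}(1) gives $m \vee j = m^*$ and $m \wedge j = j_*$, and feeding these into the ``if'' direction of Lemma \ref{lem:label-char}(2) for the Hasse arrow $j \to j_*$ produces $\mu(j \to j_*) = m$, that is, $\kappa(\kappa^d(m)) = m$. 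Hence $\kappa$ and $\kappa^d$ are mutually inverse bijections.

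For (2), fix a Hasse arrow $a \to b$ and set $j := \gamma(a \to b) \in \jirr L$, so $b \vee j = a$ and $b \wedge j = j_*$ by Lemma \ref{lem:label-char}(1). Since $\kappa(j) = \max\{x \mid j \wedge x = j_*\}$ and $j \wedge b = j_*$, we get $b \leq \kappa(j)$, hence $b \leq a \wedge \kappa(j) \leq a$; as $a \cov b$, either $a \wedge \kappa(j) = b$ or $a \wedge \kappa(j) = a$, and the latter would force $j \leq a \leq \kappa(j)$, so $j = j \wedge \kappa(j) = j_*$ by (3), contradicting $j \in \jirr L$; thus $a \wedge \kappa(j) = b$. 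On the other hand $a \vee \kappa(j) = b \vee j \vee \kappa(j) = b \vee \kappa(j)^* = \kappa(j)^*$, using (3) and $b \leq \kappa(j) \leq \kappa(j)^*$. The ``if'' direction of Lemma \ref{lem:label-char}(2) applied to $a \to b$ now gives $\mu(a \to b) = \kappa(j) = \kappa(\gamma(a \to b))$.

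The main point to be careful about is the logical ordering---both (1) and (2) rely on (3), so (3) must come first---and the covering argument in (2) that pins down $a \wedge \kappa(j)$; beyond that, all the substantive lattice theory (existence of the extremal elements and their complete (co)irreducibility) has already been absorbed into Proposition \ref{prop:jirr-label} and Lemma \ref{lem:label-char}, so no genuine obstacle remains.
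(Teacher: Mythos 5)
Your proof is correct and follows essentially the same route as the paper: everything is reduced to the characterization of $\gamma$ and $\mu$ in Lemma \ref{lem:label-char}, with (3) being the two identities $j\wedge\kappa(j)=j_*$ and $j\vee\kappa(j)=\kappa(j)^*$ read off from that lemma and then recycled in (1) and (2). The only cosmetic difference is in (2), where you verify the characterizing equalities for $\kappa(j)$ at the arrow $a\to b$ (requiring your short covering argument to get $a\wedge\kappa(j)=b$), whereas the paper instead verifies them for $m=\mu(a\to b)$ at the arrow $j\to j_*$; both are equally valid.
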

\begin{proof}
  (1)
  Let $j \in \jirr L$ and put $m := \kappa(j) = \mu(j \to j_*)$. Then Lemma \ref{lem:label-char}(2) implies $j \vee m = m^*$ and $j \wedge m = j_*$. This implies $j = \gamma(m^* \to m)$ by Lemma \ref{lem:label-char}(1), that is, $j = \kappa^d(m)$ holds. By duality, $\kappa \circ \kappa^d = \id_{\mirr L}$ holds.

  (2)
  Put $j = \gamma(a \to b) $ and $m = \mu(a \to b)$. Then Lemma \ref{lem:label-char} implies the following equalities:
  (i) $b \vee j = a$, (ii) $m \vee a = m^*$, (iii) $m \wedge a = b$, (iv) $b \wedge j = j_*$.
  Then (i), (ii), and $b \leq m$ imply $m \vee j = m \vee b \vee j = m \vee a = m^*$, and (iii), (iv), and $j \leq a$ implies $m \wedge j = m \wedge a \wedge j = b \wedge j = j_*$.
  Therefore, Lemma \ref{lem:label-char} implies $m = \gamma(j \to j_*)$, that is, $m = \kappa(j)$.

  (3) This follows from the proof of (1).
\end{proof}

\subsection{The kappa map in the lattice of torsion classes}
In this subsection, we recall the basics of lattice theory of torsion classes in an abelian length category and Barnard--Todorov--Zhu's result on the kappa map in \cite{BTZ}.

We begin with recalling the basic definitions. An \emph{abelian length category} is an abelian category such that every object has a composition series.
\begin{definition}
  Let $\AA$ be an abelian length category and $\CC$ a subcategory of $\AA$.
  \begin{enumerate}
    \item $\CC$ is \emph{closed under extensions} if, for any short exact sequence in $\AA$
    \[
    \begin{tikzcd}
      0 \rar & L \rar & M \rar & N \rar & 0,
    \end{tikzcd}
    \]
    we have that $L,N \in \CC$ implies $M \in \CC$.
    \item $\CC$ is \emph{closed under quotients (resp. subobjects) in $\AA$} if, for every object $C \in \CC$, any quotients (resp. subobjects) of $C$ in $\AA$ belong to $\CC$.
    \item $\CC$ is a \emph{torsion class (resp. torsion-free class) in $\AA$} if $\CC$ is closed under extensions and quotients in $\AA$ (resp. extensions and subobjects).
  \end{enumerate}
  For a collection $\CC$ of objects in $\AA$, we denote by $\TTT(\CC)$ (resp. $\FFF(\CC)$) the smallest torsion class (resp. torsion-free class) containing $\CC$.
  We denote by $\tors\AA$ and $\torf\AA$ the posets of torsion classes and torsion-free classes respectively, ordered by inclusion. If $\AA = \mod\Lambda$ for an artinian ring $\Lambda$, we simply write $\tors\Lambda$ and $\torf\Lambda$ instead of $\tors(\mod\Lambda)$ and $\torf(\mod\Lambda)$.
\end{definition}
Since $\tors\AA$ and $\torf\AA$ are closed under intersections, they are actually complete lattices with $\bigwedge X = \bigcap X$.

We have the following basic anti-isomorphism between $\tors\AA$ and $\torf\AA$. Here, for a collection $\CC$ of objects in $\AA$, we define its Hom-orthogonal subcategories $\CC^\perp$ and $^\perp\CC$ as follows:
\begin{align*}
  \CC^\perp & := \{ M \in \AA \mid \AA(C, M) = 0
  \text{ for every $C \in \CC$}\}, \\
  ^\perp \CC & := \{ M \in \AA \mid \AA(M, C) = 0
  \text{ for every $C \in \CC$}\}.
\end{align*}
\begin{proposition}
  Let $\AA$ be an abelian length category.
  Then we have the following anti-isomorphisms of complete lattices:
  \[
    \begin{tikzcd}[column sep = large]
      \tors\AA \rar[shift left, "(-)^\perp"]
      & \torf\AA \lar[shift left, "^\perp(-)"].
    \end{tikzcd}
  \]
\end{proposition}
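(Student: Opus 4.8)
The plan is to show that $(-)^\perp$ and ${}^\perp(-)$ are mutually inverse, inclusion-reversing bijections; that they then automatically exchange arbitrary joins and meets is a purely formal consequence of being an order-reversing bijection between complete lattices whose inverse is again order-reversing.

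\emph{Step 1: the maps are well-defined.} I would first check that $\CC^\perp$ is a torsion-free class whenever $\CC$ is a torsion class, and dually that ${}^\perp\DD$ is a torsion class whenever $\DD$ is a torsion-free class. For closure of $\CC^\perp$ under subobjects: if $M'\hookrightarrow M$ with $M\in\CC^\perp$, then $\AA(C,M')\hookrightarrow\AA(C,M)=0$ for every $C\in\CC$. For closure under extensions: applying $\AA(C,-)$ to $0\to L\to M\to N\to 0$ and using left-exactness of $\Hom$ shows $\AA(C,M)=0$ when $\AA(C,L)=\AA(C,N)=0$. The statement for ${}^\perp\DD$ is dual. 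This part is routine.

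\emph{Step 2: torsion pairs exist.} The key input, and the only place the length hypothesis is used, is that for each $M\in\AA$ and each torsion class $\CC$ there is a short exact sequence $0\to tM\to M\to M/tM\to 0$ with $tM\in\CC$ and $M/tM\in\CC^\perp$. I would let $tM$ be the sum of all subobjects of $M$ lying in $\CC$. Since a sum $A+B$ of two subobjects of $M$ is a quotient of $A\oplus B$, and $A\oplus B\in\CC$ (an extension of $B$ by $A$), closure of $\CC$ under quotients gives $A+B\in\CC$; by induction every finite such sum lies in $\CC$, and by the finite-length condition $tM$ is attained by a finite sum, so $tM\in\CC$ and is the largest subobject of $M$ in $\CC$. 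Then $M/tM\in\CC^\perp$: a nonzero map $f\colon C\to M/tM$ with $C\in\CC$ would have nonzero image $\im f$, which is a subobject of $M/tM$ lying in $\CC$ (a quotient of $C$); its preimage $X\subseteq M$ is an extension of $\im f\in\CC$ by $tM\in\CC$, hence $X\in\CC$ with $X\supsetneq tM$, contradicting maximality. Dually, for a torsion-free class $\DD$ one gets a sequence $0\to {}^\perp\!\DD\text{-part}\to M\to \DD\text{-part}\to 0$.

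\emph{Step 3: mutually inverse, and conclusion.} Clearly $\CC\subseteq{}^\perp(\CC^\perp)$. Conversely, if $M\in{}^\perp(\CC^\perp)$, the quotient map $M\to M/tM$ lies in $\AA(M,\CC^\perp$-object$)=0$, so $M=tM\in\CC$; hence ${}^\perp(\CC^\perp)=\CC$, and dually $({}^\perp\DD)^\perp=\DD$. Both maps visibly reverse inclusions, so they are mutually inverse anti-isomorphisms of posets. Finally, since $\tors\AA$ and $\torf\AA$ are complete lattices, a formal argument finishes the proof: given $\{\CC_i\}\subseteq\tors\AA$, each $\CC_j\subseteq\bigvee_i\CC_i$ gives $(\bigvee_i\CC_i)^\perp\subseteq\CC_j^\perp$, so $(\bigvee_i\CC_i)^\perp$ is a lower bound for $\{\CC_i^\perp\}$; and if $\DD$ is any lower bound, applying ${}^\perp(-)$ yields $\CC_j\subseteq{}^\perp\DD$ for all $j$, hence $\bigvee_i\CC_i\subseteq{}^\perp\DD$, hence $\DD\subseteq(\bigvee_i\CC_i)^\perp$. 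Thus $(\bigvee_i\CC_i)^\perp=\bigwedge_i\CC_i^\perp$, and symmetrically joins and meets are interchanged, so the anti-isomorphism is one of complete lattices.

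\emph{Main obstacle.} The only nontrivial point is Step 2: verifying that the sum of all subobjects of $M$ lying in $\CC$ actually belongs to $\CC$, which requires closure under extensions (to handle $A\oplus B$), closure under quotients, and the finite-length condition together; everything after that is formal.
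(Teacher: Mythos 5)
Your proof is correct. The paper states this proposition without proof, treating it as a classical fact, and your argument is precisely the standard one it implicitly relies on: well-definedness of the two perpendicular operations via left/right exactness of $\Hom$, existence of the torsion pair decomposition $0 \to tM \to M \to M/tM \to 0$ (where the length hypothesis guarantees that the sum of all subobjects lying in $\CC$ is attained by a finite sum and hence lies in $\CC$), the resulting identities ${}^\perp(\CC^\perp) = \CC$ and $({}^\perp\DD)^\perp = \DD$, and the formal observation that a mutually inverse pair of order-reversing bijections between complete lattices automatically interchanges arbitrary joins and meets. You have correctly identified Step 2 as the only non-formal ingredient, and your maximality argument there (the preimage of a nonzero image would be a strictly larger subobject in $\CC$) is exactly right.
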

Moreover, it is known that $\tors\AA$ is completely semidistributive:
\begin{theorem}[{\cite[Theorem 3.1(a)]{DIRRT}}]
  Let $\AA$ be an abelian length category. Then $\tors\AA$ is completely semidistributive.
\end{theorem}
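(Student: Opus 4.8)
Since $\tors\AA$ is already known to be a complete lattice with $\bigwedge X=\bigcap X$, the only points that remain are conditions (1) and (2) of Definition \ref{def:semidist}, and the plan is to reduce to a single one of them. A subcategory of $\AA$ is a torsion-free class precisely when it is a torsion class in the opposite category $\AA^{\op}$, which is again an abelian length category; hence $\torf\AA=\tors(\AA^{\op})$ as posets, and the anti-isomorphism $(-)^{\perp}\colon\tors\AA\isoto\torf\AA$ identifies $\tors\AA$ with the opposite poset of $\tors(\AA^{\op})$. As conditions (1) and (2) are interchanged under passing to the opposite poset, it is enough to prove condition (2) for $\tors\BB$ for every abelian length category $\BB$: taking $\BB=\AA$ gives (2) for $\tors\AA$, and taking $\BB=\AA^{\op}$ and transporting along the anti-isomorphism gives (1) for $\tors\AA$.

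To prove condition (2), I would fix a torsion class $\TT$ and torsion classes $\{\XX_i\}_{i\in I}$ with $\TT\cap\XX_i=\UU$ for every $i$, and show $\TT\cap\bigvee_{i}\XX_i=\UU$. One uses here that $\bigvee_{i}\XX_i=\Filt(\bigcup_{i}\XX_i)$, which holds because $\bigcup_{i}\XX_i$ is closed under quotients and $\Filt$ of a quotient-closed class is a torsion class. Put $\TT':=\TT\cap\bigvee_{i}\XX_i$; the inclusion $\UU\subseteq\TT'$ is clear, so only $\TT'\subseteq\UU$ needs an argument. Since $(\UU,\UU^{\perp})$ is a torsion pair, $\UU={}^{\perp}(\UU^{\perp})$, and therefore it suffices to check $\Hom(M,F)=0$ for all $M\in\TT'$ and $F\in\UU^{\perp}$; as the image of any such morphism is a quotient of $M$ (so it lies in $\TT'$) and a subobject of $F$ (so it lies in $\UU^{\perp}$), the whole statement reduces to showing that the heart
\[
  \HH:=\TT'\cap\UU^{\perp}=\TT\cap\Bigl(\bigvee_{i\in I}\XX_i\Bigr)\cap\UU^{\perp}
\]
contains no nonzero object.

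The heart is then killed by induction on composition length. Suppose $\HH\neq 0$ and choose $B\in\HH$ nonzero of minimal length. Since $B$ lies in $\Filt(\bigcup_{j}\XX_j)$ and is nonzero, the bottom term of a filtration of $B$ is a nonzero subobject belonging to some $\XX_j$, so $T:=t_{\XX_j}(B)\neq 0$, where $t_{\XX_j}$ denotes the torsion radical of the torsion pair $(\XX_j,\XX_j^{\perp})$. If $T=B$ then $B\in\XX_j$, hence $B\in\TT\cap\XX_j=\UU$, which together with $B\in\UU^{\perp}$ forces $B=0$, a contradiction; so $T\subsetneq B$ and $B/T$ is nonzero of strictly smaller length. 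Now $B/T$ is a quotient of $B$, hence lies in $\TT$ and in $\bigvee_{i}\XX_i$ (both quotient-closed), while $B/T=B/t_{\XX_j}(B)$ is $\XX_j$-torsion-free and hence lies in $\XX_j^{\perp}\subseteq\UU^{\perp}$, the last inclusion holding because $\UU\subseteq\XX_j$. Thus $B/T$ is a nonzero element of $\HH$ shorter than $B$, contradicting minimality; so $\HH=0$. By the previous paragraph this proves condition (2), and by the first paragraph it proves condition (1), completing the argument.

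The step I expect to be the real obstacle is the induction that kills the heart. A torsion class need not be closed under subobjects --- a submodule of a module in $\TT$ can fail to lie in $\TT$ --- so one cannot peel a $\bigcup_{j}\XX_j$-filtration of $B$ from the bottom and keep the pieces inside $\TT$; and one cannot follow membership in $\UU^{\perp}$ along quotients, since torsion-free classes are not quotient-closed. The device that makes the induction succeed is to remove the $\XX_j$-torsion radical rather than a filtration term, and to detect membership in $\UU^{\perp}$ through the containment $\XX_j^{\perp}\subseteq\UU^{\perp}$, i.e.\ via the torsion-free side rather than directly. The duality reduction of the first paragraph is not logically necessary, but it roughly halves the bookkeeping.
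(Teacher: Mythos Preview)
Your argument is correct. The paper itself does not give a proof of this theorem: it simply cites \cite{DIRRT} and remarks that the argument there, written for $\mod\Lambda$, applies verbatim to any abelian length category. Your proof is essentially that argument --- reduce to one semidistributive law via the anti-isomorphism $\tors\AA\cong(\tors\AA^{\op})^{\op}$, compute the join as $\Filt(\bigcup_i\XX_i)$, and use induction on length, stripping off the $\XX_j$-torsion radical to stay inside the heart --- so there is nothing to compare.
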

Although it is assumed in \cite{DIRRT} that $\AA = \mod\Lambda$ for a finite-dimensional algebra $\Lambda$, the same proof applies for an abelian length category.

Therefore, we have a bijection $\kappa\colon \jirr(\tors\AA) \isoto \mirr(\tors\AA)$ by Theorem \ref{thm:kappa-bij}. Moreover, we have a description of $\jirr(\tors\AA)$ using \emph{bricks} in \cite{DIRRT, BCZ}, and a beautiful description of the bijection $\kappa$ is given in \cite{BTZ}.
In what follows, we explain their results and give (alternative) proofs in the setting of abelian length categories.

Let us begin with introducing \emph{bricks} and related notions.
\begin{definition}
  Let $\AA$ be an abelian length category.
  \begin{enumerate}
    \item An object $B$ of $\AA$ is called a \emph{brick} if $\End_\AA(B)$ is a division ring.
    \item For a subcategory $\CC$ of $\AA$, we denote by $\brick\CC$ the set of isomorphism classes of bricks contained in $\CC$.
    \item A \emph{semibrick in $\AA$} is a subset $\SS$ of $\brick\AA$ such that $\AA(B_1,B_2) = 0$ holds for $B_1 \neq B_2$.
    \item We denote by $\sbrick \AA$ the set of semibricks in $\AA$.
  \end{enumerate}
  We often identify an element of $\brick\AA$ with its representative.
\end{definition}

We need the following observation on the torsion closure of a brick later.
\begin{lemma}[{\cite[Lemma 4.4]{DIJ}}]\label{lem:tors-of-sbrick}
  Let $B$ be a brick in $\AA$. Then for every $M \in \TTT(B)$, every non-zero map $M \to B$ is a surjection.
\end{lemma}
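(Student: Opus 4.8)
The plan is to use the defining property of the torsion closure $\TTT(B)$ together with the brick condition on $B$. Since $B$ is a brick, $\End_\AA(B)$ is a division ring, so every endomorphism of $B$ is either $0$ or an isomorphism. Recall that $\TTT(B) = \Filt(\Fac\, B)$, i.e.\ $\TTT(B)$ consists of objects admitting a finite filtration with subquotients that are quotients of $B$; this is the standard description of the smallest torsion class containing $B$ in an abelian length category, obtained by closing $\{B\}$ under quotients and then under extensions.

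First I would reduce to the case where $M$ is a quotient of $B$. Given $M \in \TTT(B)$ and a non-zero map $f\colon M \to B$, take a filtration $0 = M_0 \subseteq M_1 \subseteq \cdots \subseteq M_n = M$ with each $M_i/M_{i-1} \in \Fac\, B$. I would induct on $n$: since $f \neq 0$, there is a largest index $i$ such that $f|_{M_{i-1}} = 0$ but $f|_{M_i} \neq 0$, so $f$ factors through a non-zero map $\bar f\colon M_i/M_{i-1} \to B$ (because $M_i/M_{i-1}$ is a quotient of $M_i$ and $f$ kills $M_{i-1}$); it then suffices to show $\bar f$ is a surjection and then argue that $f|_{M_i}$ is a surjection, and finally that $f$ itself is — but this last step needs care. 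Actually the cleaner route is: it suffices to prove that $\im f = B$. So I would instead argue directly that $\im f \subseteq B$ is all of $B$. Consider $N := \im f$, which is a non-zero subobject of $B$ and also a quotient of $M \in \TTT(B)$, hence $N \in \TTT(B)$ since torsion classes are closed under quotients. Now I would show any non-zero subobject $N$ of a brick $B$ with $N \in \TTT(B)$ must equal $B$.

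For that final claim: since $N \in \TTT(B) = \Filt(\Fac\, B)$, there is a non-zero map $B \twoheadrightarrow Q$ with $Q$ the bottom subquotient of a filtration of $N$, i.e.\ $Q$ is a non-zero quotient of $B$ that embeds into $N$ (take $Q = N_1$, the first step of the filtration, which is both a quotient of $B$ and a subobject of $N$). Composing $B \twoheadrightarrow Q \hookrightarrow N \hookrightarrow B$ gives an endomorphism of $B$ which is non-zero (it is surjective onto $Q \neq 0$ composed with monomorphisms, hence non-zero), so it is an isomorphism of $B$; in particular the monomorphism $N \hookrightarrow B$ is split epi, hence an isomorphism, so $N = B$. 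Applying this with $N = \im f$ shows $f$ is surjective, as desired.

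The main obstacle I anticipate is getting the filtration bookkeeping right — specifically, extracting from $M \in \TTT(B)$ a non-zero subobject that is simultaneously a quotient of $B$, so that one can build the endomorphism $B \to B$ whose non-vanishing forces an isomorphism. One must be careful that the first filtration step $N_1 \in \Fac\, B$ is genuinely non-zero and genuinely a subobject of $N$; this is immediate once one arranges the filtration so that $N_1 \neq 0$. Everything else (closure of torsion classes under quotients, the division-ring property forcing non-zero endomorphisms to be invertible, and a split mono into a brick being an isomorphism) is routine. The only subtlety worth flagging is that one should not try to prove $f$ is a split epimorphism or has some section — merely surjectivity is asserted, and the image argument above delivers exactly that without extra hypotheses.
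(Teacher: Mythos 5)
Your argument is correct, but it takes a genuinely different route from the paper. You work with the explicit description $\TTT(B)=\Filt(\Fac B)$, pass to $N:=\im f$ (a nonzero subobject of $B$ lying in $\TTT(B)$ because torsion classes are closed under quotients), extract a nonzero first filtration step $N_1\in\Fac B$ inside $N$, and use the composite $B\defl N_1\hookrightarrow N\hookrightarrow B$, which is a nonzero endomorphism of the brick $B$ and hence invertible, forcing $N\hookrightarrow B$ to be epi and therefore an isomorphism. The paper instead defines $\CC:=\{M\in\AA\mid\text{every nonzero map }M\to B\text{ is surjective}\}$, checks directly that $\CC$ is closed under quotients and extensions and contains $B$, and concludes $\TTT(B)\subseteq\CC$ by minimality. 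The paper's proof is shorter and needs only the definition of $\TTT(B)$ as the smallest torsion class containing $B$; yours requires the filtration description of the torsion closure as an external input (it is standard, and the paper itself invokes it elsewhere via \cite[Lemma 3.1]{MS}, so this is a legitimate but heavier prerequisite). Two small points to tidy: you should say explicitly that the nontrivial filtration step $N_1$ can be taken nonzero by discarding repeated terms; and strictly speaking $\Fac B$ should allow quotients of finite direct sums $B^n$, in which case you pick a summand on which the composite $B\to N_1\hookrightarrow N\hookrightarrow B$ is nonzero --- the argument goes through unchanged. Your initial induction-on-filtration sketch was rightly abandoned; the image argument you settled on is the correct one.
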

\begin{proof}
  Consider the following subcategory $\CC$ of $\AA$:
  \[
    \CC := \{ M \in \AA \mid \text{every non-zero map $M \to B$ is surjective} \}
    \]
  We can easily check that $\CC$ is closed under quotients and extensions, thus $\CC$ is a torsion class. Moreover, $B \in \CC$ holds since $B$ is a brick. Thus $\TTT(B) \subseteq \CC$ holds by the minimality of $\TTT(B)$.
\end{proof}
Now we have the following relation between bricks, join-irreducibles, and meet-irreducibles.
\begin{theorem}\label{thm:btz}
  Let $\AA$ be an abelian length category. Then the following hold.
  \begin{enumerate}
    \item \cite[Theorem 3.3(c)]{DIRRT}, \cite[Theorem 1.5]{BCZ} We have a bijection $\brick\AA \isoto \jirr(\tors\AA)$ given by $B \mapsto \TTT(B)$.
    \item Dually, we have a bijection $\brick\AA \isoto \mirr(\tors\AA)$ given by $B \mapsto {}^\perp B$.
    \item \cite[Theorem 4.3.1]{BTZ} The composite $\jirr(\tors\AA) \to \mirr(\tors\AA)$ of bijections in \textup{(1)} and \textup{(2)} coincides with $\kappa$:
    \[
      \begin{tikzcd}[row sep = 0]
        \jirr(\tors\AA)  & \brick\AA \lar["\sim"'] \rar["\sim"] & \mirr(\tors\AA), \\
        \TTT(B) & \lar[mapsto] B \rar[mapsto] & {}^\perp B.
      \end{tikzcd}
    \]
    Namely, $\kappa(\TTT(B)) = {}^\perp B$ holds for every $B \in \brick\AA$.
  \end{enumerate}
\end{theorem}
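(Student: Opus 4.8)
The plan is to establish (1) directly, reading off along the way the identity $\TTT(B)_*=\TTT(B)\cap{}^\perp B$; then to deduce (3) from this identity plus one small observation about subobjects of a brick; and finally to obtain (2) for free from (1), (3) and Theorem \ref{thm:kappa-bij}.

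For (1), I would first note that ${}^\perp B$ is a torsion class, and prove the key claim that every torsion class $\TT$ with $\TT\subsetneq\TTT(B)$ is contained in ${}^\perp B$: a nonzero map from some $M\in\TT\subseteq\TTT(B)$ to $B$ is surjective by Lemma \ref{lem:tors-of-sbrick}, so $B\in\Fac M\subseteq\TT$, forcing $\TTT(B)\subseteq\TT$, a contradiction. Since $B\notin{}^\perp B$, this yields $\TTT(B)_*\subseteq\TTT(B)\cap{}^\perp B\subsetneq\TTT(B)$, so $\TTT(B)$ is completely join-irreducible and $\TTT(B)_*=\TTT(B)\cap{}^\perp B$. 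Injectivity follows because $\TTT(B_1)=\TTT(B_2)=:\TT$ forces $B_i\notin\TT_*=\TT\cap{}^\perp B_{3-i}$, hence mutual nonzero maps between $B_1$ and $B_2$ that are surjective by Lemma \ref{lem:tors-of-sbrick}; comparing lengths shows they are isomorphisms. For surjectivity, given $j\in\jirr(\tors\AA)$ I would take an object $M$ of minimal length in $j\setminus j_*$ and check: $M\in(j_*)^\perp$ (an object of $j_*$ mapping nontrivially to $M$ would have image in $j_*$ and, the image being nonzero hence the quotient strictly shorter, also $M/\text{image}\in j_*$ by minimality, so $M\in j_*$); $M$ is a brick (the image of a nonzero non-invertible endomorphism of $M$ would be a strictly shorter nonzero object lying in $j$ as a quotient of $M$ and in $(j_*)^\perp$ as a subobject of $M$, since torsion-free classes are closed under subobjects, hence in $j\setminus j_*$ because $j_*\cap(j_*)^\perp=0$, contradicting minimality); and finally $\TTT(M)=j$, since $\TTT(M)\subseteq j$ but $\TTT(M)\not\subseteq j_*$ and $j$ is completely join-irreducible.

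For (3), I would use that $\kappa(\TTT(B))=\max\{x\in\tors\AA\mid\TTT(B)\cap x=\TTT(B)_*\}$ by Definition \ref{def:kappa}; by (1), ${}^\perp B$ is a torsion class lying in this set, so only maximality remains. Suppose $x$ is in the set but $x\not\subseteq{}^\perp B$, and pick $M\in x$ with a nonzero $f\colon M\to B$; set $U:=\im f$, a nonzero subobject of $B$ with $U\in\Fac M\subseteq x$. The crux is that $\AA(B/U,B)=0$: a nonzero map $B/U\to B$ precomposed with $B\twoheadrightarrow B/U$ would be a nonzero endomorphism of the brick $B$, hence an isomorphism, forcing $U=0$. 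Since also $B/U\in\TTT(B)$, we get $B/U\in\TTT(B)\cap{}^\perp B=\TTT(B)\cap x\subseteq x$, and then the exact sequence $0\to U\to B\to B/U\to 0$ gives $B\in x$, whence $B\in\TTT(B)\cap x=\TTT(B)\cap{}^\perp B\subseteq{}^\perp B$, which is absurd. Hence $x\subseteq{}^\perp B$, so $\kappa(\TTT(B))={}^\perp B$. Then (2) is immediate: $B\mapsto{}^\perp B$ is the composite $B\mapsto\TTT(B)\mapsto\kappa(\TTT(B))$ of the bijection from (1) with the bijection $\kappa$ of Theorem \ref{thm:kappa-bij}(1), hence a bijection $\brick\AA\isoto\mirr(\tors\AA)$; alternatively one dualizes the argument for (1) through the anti-isomorphism $(-)^\perp\colon\tors\AA\to\torf\AA$.

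The step I expect to be the main obstacle is the maximality claim in (3): it is not obvious a priori, and it hinges on the small lemma that $\AA(B/U,B)=0$ for every nonzero subobject $U$ of a brick $B$. In (1) the only delicate point is bookkeeping — making sure the minimal-length witness actually lies in the heart $j\cap(j_*)^\perp$ and that images of its endomorphisms remain there.
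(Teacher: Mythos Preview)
Your argument is correct. The differences from the paper's proof are as follows.

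For the surjectivity in (1), the paper invokes the identity $\TT=\Filt(\brick\TT)$ (Lemma~\ref{lem:heart-brick}) to write any $\TT$ as $\bigvee\{\TTT(B)\mid B\in\brick\TT\}$ and then uses join-irreducibility; your minimal-length argument is more self-contained and avoids that lemma, at the cost of a small amount of bookkeeping in the heart $j\cap(j_*)^\perp$.

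For (3), the paper proceeds by duality: having shown $j_*=\TTT(B)\cap{}^\perp B$, it applies the dual of the Claim in $\torf\AA$ to get $\FFF(B)_*=\FFF(B)\cap B^\perp$, transports this through ${}^\perp(-)$ to obtain $m^*=m\vee j$, and then reads off $\kappa(j)=m$ from Lemma~\ref{lem:label-char}. Your approach instead verifies the maximality in the definition of $\kappa$ directly, via the observation $\AA(B/U,B)=0$ for a nonzero subobject $U$ of a brick $B$ and an extension argument inside $x$. This is a genuinely different route: the paper's is shorter and symmetric but leans on the label characterization, while yours is elementary and stays entirely on the torsion side.

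Finally, the paper proves (2) before (3) by dualizing (1) through $\AA^{\op}$ and the anti-isomorphism ${}^\perp(-)\colon\torf\AA\to\tors\AA$; you reverse the order and obtain (2) from (1), (3) and the bijectivity of $\kappa$ (Theorem~\ref{thm:kappa-bij}). Both orderings are fine.
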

\begin{proof}
  (1) First, we show the following claim:

  \textbf{(Claim)}: $\TT \subsetneq \TTT(B)$ if and only if $\TT \subseteq \TTT(B) \cap {}^\perp B$.

  The ``if'' part follows from $\TTT(B) \cap {}^\perp B \subsetneq \TTT(B)$ (this is because $B \in \TTT(B)$ and $B \not \in {}^\perp B$).
  Conversely, suppose $\TT \subsetneq \TTT(B)$. It suffices to show that every $M \in \TT$ satisfies $\AA(M,B)= 0$. If this is not the case, then there is some non-zero map $M \to B$, which is surjective by Lemma \ref{lem:tors-of-sbrick}.
  Thus we obtain $B \in \TT$ from $M \in \TT$. Hence $\TTT(B) \subseteq \TT$ holds, which is a contradiction.

  Now (Claim) implies that $\TTT(B)_* = \TTT(B) \cap {}^\perp B \subsetneq \TTT(B)$, thus $\TTT(B)$ is completely join-irreducible.
  Next, let $\TT \in \tors\AA$. Since we have $\TT = \Filt (\brick \TT)$ (see Lemma \ref{lem:heart-brick}), it follows that $\TT = \bigvee \{ \TTT(B) \mid B \in \brick \TT \}$. Therefore, if $\TT$ is completely join-irreducible, then $\TT= \TTT(B)$ for some brick $B$. This shows that the map $\TTT \colon \brick\AA \to \jirr(\tors\AA)$ is surjective.

  Finally, we will show that $\TTT(B_1) = \TTT(B_2)$ for $B_1,B_2 \in \brick\AA$ implies $B_1 \iso B_2$. If $B_2 \in {}^\perp B_1$, then $\TTT(B_2) \subseteq \TTT(B_1) \cap {}^\perp B_1 = \TTT(B_1)_* \neq \TTT(B_1)$, which contradicts $\TTT(B_1) = \TTT(B_2)$. Therefore, we obtain $B_2 \not \in {}^\perp B_1$, that is, there is a non-zero map $B_2 \to B_1$. Lemma \ref{lem:tors-of-sbrick} implies that this map is a surjection. In the same way, we obtain a surjection $B_1 \defl B_2$.
  Since $\AA$ is a length category, $B_1$ and $B_2$ should be isomorphic.

  (2)
  By considering the opposite abelian category $\AA^{\op}$ and using (1), we have a bijection $\brick\AA\isoto \jirr(\torf \AA)$ given by $B \mapsto \FFF(B)$. Consider the lattice anti-isomorphism $^\perp(-) \colon \torf\AA \to \tors\AA$. This sends $\FFF(B)$ to $^\perp \FFF(B) ={}^\perp B$ and induces a bijection $\jirr(\torf\AA) \iso \mirr(\tors\AA)$, hence the assertion holds.

  (3)
  Let $B \in \brick\AA$, and put $j:= \TTT(B) \in \jirr(\tors\AA)$ and $m:= {}^\perp B \in \mirr(\tors\AA)$ for simplicity.
  Recall that we have $j_* = \TTT(B) \cap {}^\perp B = j \wedge m$ by (Claim), and dually we have $\FFF(B)_* = \FFF(B) \cap B^\perp$ in $\torf\AA$.
  Thus by the lattice anti-isomorphism $^\perp(-) \colon \torf\AA \to \tors\AA$, we have the following equality in $\tors\AA$:
  \begin{align*}
    m^* &= ({}^\perp \FFF(B))^* = {}^\perp (\FFF(B)_*)
    = {}^\perp (\FFF(B) \cap B^\perp)\\
    &= {}^\perp \FFF(B) \vee {}^\perp(B^\perp)
    = {}^\perp B \vee \TTT(B) = m \vee j.
  \end{align*}
  Therefore, $j \wedge m = j_*$ and $j \vee m = m^*$ hold. This implies $\mu(j \to j_*) = m$ by Lemma \ref{lem:label-char}, that is, $\kappa(j) = m$.
\end{proof}

\section{Computing the posets of torsion hearts}\label{sec:3}
In this section, we introduce the notion of \emph{torsion hearts}, and prove our first main result Theorem \ref{thm:intro-b} (Theorem \ref{thm:main}).

We begin with the following standard construction in a poset.
\begin{definition}
  Let $P$ be a poset. We define the subset $\itv P$ of $P \times P$ as follows:
  \[
    \itv P := \{ (x,y) \in P \times P \mid x \leq y \text{ in $P$} \}.
  \]
  We often write $[x,y] \in \itv P$ instead of $(x,y) \in \itv P$.
\end{definition}
The symbol $\itv$ stands for \emph{intervals}, and we identify a pair $(x,y) \in \itv P$ with the closed interval $[x,y]$ in $P$. Note that we only consider closed intervals.

\subsection{Preliminaries on torsion hearts}
Next, we introduce the notion of \emph{torsion hearts}, which are subcategories associated with elements of $\itv(\tors\AA)$ (called the \emph{heart of intervals in $\tors\AA$} in e.g. \cite{ES} and the \emph{heart of twin torsion pairs} in \cite{tattar}).

\begin{definition}
  Let $\AA$ be an abelian length category.
  \begin{enumerate}
    \item For $[\UU, \TT] \in \itv(\tors\AA)$, define the subcategory $\HH_{[\UU,\TT]}$ of $\AA$ as follows:
    \[
      \HH_{[\UU, \TT]} := \TT \cap \UU^\perp.
    \]
    We call $\HH_{[\UU,\TT]}$ the \emph{heart} of the interval $[\UU,\TT]$.
    \item A subcategory $\CC$ is a \emph{torsion heart of $\AA$} if there is some $[\UU,\TT] \in \itv (\tors\AA)$ satisfying $\CC = \HH_{[\UU,\TT]}$.
  \end{enumerate}
  We denote by $\theart \AA$ the poset of torsion hearts in $\AA$ ordered by inclusion. If $\AA = \mod\Lambda$ for an artinian ring $\Lambda$, we simply write $\theart\Lambda$ instead of $\theart(\mod\Lambda)$.
\end{definition}

\begin{example}
  Every torsion class $\TT$ and torsion-free class $\FF$ is a torsion heart because $\TT = \HH_{[0,\TT]}$ and $\FF = ({}^\perp\FF)^\perp = \HH_{[{}^\perp\FF,\AA]}$. Thus $\tors\AA$ and $\torf\AA$ can be regarded as full subposets of $\theart\AA$.
\end{example}

\begin{remark}
  In general, $\theart\AA$ is not a lattice since $\theart\AA$ may not be closed under intersections.
  For example, $\theart kQ$ is not a lattice for the path algebra $kQ$ over a field $k$ of the quiver $Q \colon 1 \ot 2 \ot 3$. Indeed, $\add \{ \sst{1}, \sst{2\\1}, \sst{3\\2\\1}, \sst{3}\}$ and $\add \{ \sst{1}, \sst{3\\2\\1}, \sst{3\\2}, \sst{3}\}$ belong to $\theart kQ$, but a meet of them does not exist. 
\end{remark}

Recently, the author introduced \emph{ICE-closed subcategories} in \cite{eno-rigid} which generalize torsion classes and wide subcategories as follows. We say that a subcategory $\CC$ of an abelian category $\AA$ is \emph{closed under kernels (resp. cokernels, images)} if for every morphism $f \colon C_1 \to C_2$ in $\AA$ with $C_1, C_2 \in \CC$, we have $\ker f \in \WW$ (resp. $\coker f \in \WW$, $\im f \in \WW$).
\begin{definition}
  Let $\AA$ be an abelian length category and $\CC$ a subcategory of $\AA$.
  \begin{enumerate}
    \item $\CC$ is a \emph{wide subcategory} if it is closed under extensions, kernels, and cokernels.
    \item $\CC$ is \emph{ICE-closed} if it is closed under images, cokernels, and extensions.
  \end{enumerate}
  We denote by $\wide\AA$ and $\ice\AA$ the posets of wide subcategories and ICE-closed subcategories respectively, ordered by inclusion.
  If $\AA = \mod\Lambda$ for an artinian ring $\Lambda$, we simply write $\wide\Lambda$ and $\ice\Lambda$ instead of $\wide(\mod\Lambda)$ and $\ice(\mod\Lambda)$.
\end{definition}
Since $\wide\AA$ and $\ice\AA$ are closed under intersections, these posets are actually complete lattices.
By considering the opposite category $\AA^{\op}$, properties of \emph{IKE-closed} subcategories (subcategories closed under images, kernels, and extensions) follow from those of ICE-closed subcategories. Therefore, we omit statements about IKE-closed subcategories.

The following theorem claims that wide subcategories and ICE-closed subcategories are torsion hearts, and also give purely lattice-theoretical characterizations of intervals whose hearts are wide or ICE-closed.
\begin{theorem}\label{thm:itv-char}
  Let $\AA$ be an abelian length category.
  \begin{enumerate}
    \item \cite[Proposition 6.3]{AP} Every wide subcategory $\WW$ of $\AA$ is a torsion heart. Explicitly, $\WW$ is the heart of $[\TTT(\WW) \cap {}^\perp \WW, \TTT(\WW)]$.
    Moreover, $\HH_{[\UU,\TT]}$ is a wide subcategory for $[\UU,\TT] \in \itv(\tors\AA)$ if and only if the following equality holds in $\tors\AA$:
    \[
      \TT = \UU \vee \bigvee \{ \UU' \in \tors\AA \mid \UU \covd \UU' \leq \TT \}.
    \]
    \item \cite[Proposition 3.1, Theorem 3.4]{ES} Every ICE-closed subcategory of $\AA$ is a torsion heart. Moreover, $\HH_{[\UU,\TT]}$ is ICE-closed for $[\UU,\TT] \in \itv(\tors\AA)$ if and only if the following holds in $\tors\AA$:
    \[
      \TT \leq \UU \vee \bigvee \{ \UU' \in \tors\AA \mid \UU \covd \UU'\}.
    \]
  \end{enumerate}
\end{theorem}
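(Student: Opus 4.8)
The plan is to study the heart $\HH:=\HH_{[\UU,\TT]}=\TT\cap\UU^\perp$ as an abelian subcategory of $\AA$, reduce the two characterisations to closure under suitable kernels and cokernels \emph{computed in $\AA$}, and finally translate these closure properties into covering conditions in $\tors\AA$ --- this last step being where I expect essentially all the difficulty to lie. For \emph{any} interval $[\UU,\TT]$ one records two structural facts. First, $\HH$ is closed under extensions, since both the torsion class $\TT$ and the torsion-free class $\UU^\perp$ are. Second, $\HH$ is closed under images taken in $\AA$: for $f\colon X\to Y$ with $X,Y\in\HH$, the object $\im f$ is a quotient of $X\in\TT$ and a subobject of $Y\in\UU^\perp$, hence lies in $\TT\cap\UU^\perp$. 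Since $\AA$ is a length category, every object has a $(\TT,\TT^\perp)$-torsion subobject $t_\TT(-)$ and a $(\UU,\UU^\perp)$-torsionfree quotient $(-)/t_\UU(-)$, and one checks (cf.\ \cite{tattar}) that $\HH$ is an abelian category in which the kernel of a morphism $f$ is $t_\TT(\ker_\AA f)$, the cokernel is $(\coker_\AA f)/t_\UU(\coker_\AA f)$, and images coincide with those of $\AA$. Consequently the inclusion $\HH\hookrightarrow\AA$ is exact --- equivalently, $\HH$ is a wide subcategory --- if and only if $\ker_\AA f\in\TT$ (it automatically lies in $\UU^\perp$) and $\coker_\AA f\in\UU^\perp$ (it automatically lies in $\TT$) for every morphism $f$ in $\HH$; and since extensions, images, and the torsion-class membership of cokernels are automatic, $\HH$ is ICE-closed if and only if $\coker_\AA f\in\UU^\perp$ for every $f$ in $\HH$.

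I would then reformulate these conditions in terms of subobjects and quotients. As $f$ ranges over morphisms of $\HH$ with a fixed target $Y$, the cokernel $\coker_\AA f$ ranges exactly over the quotients $Y/Y'$ with $Y'\le Y$ and $Y'\in\TT$ (such a $Y'$ automatically lies in $\HH$); dually, for a fixed source $X$, $\ker_\AA f$ ranges over the subobjects $X'\le X$ with $X/X'\in\UU^\perp$. Hence $\HH$ is ICE-closed if and only if every $\TT$-subobject $Y'$ of an object $Y\in\HH$ satisfies $Y/Y'\in\UU^\perp$ --- equivalently, no object of $\HH$ admits a nonzero quotient lying in $\UU$ whose kernel lies in $\TT$ --- and $\HH$ is wide if and only if, in addition, every subobject $X'\le X$ of an object $X\in\HH$ with $X/X'\in\UU^\perp$ already lies in $\TT$.

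The crux is translating these module-theoretic conditions into the lattice. Here I would invoke the brick labelling of $\Hasse(\tors\AA)$ from \cite{DIRRT}: each Hasse arrow of $\tors\AA$ carries a unique brick $B$, namely the unique brick of the heart of that length-one interval, and the top of the arrow equals the join of its bottom with $\TTT(B)$ (this uses Lemma \ref{lem:label-char}, and under Theorem \ref{thm:btz} this $\TTT(B)$ is the join-irreducible label of the arrow). Applying this to the Hasse arrows leaving $\UU$ shows that $\UU^{+}:=\UU\vee\bigvee\{\UU'\mid\UU\covd\UU'\}=\TTT(\UU\cup\SS)$, where $\SS$ is the semibrick of labels of those arrows. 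One then proves that the ICE condition above holds if and only if $\TT\subseteq\TTT(\UU\cup\SS)=\UU^{+}$, i.e.\ $\TT\le\UU\vee\bigvee\{\UU'\mid\UU\covd\UU'\}$, which is part (2): in one direction, an object of $\HH$ with a nonzero quotient in $\UU$ and kernel in $\TT$ produces, via Lemma \ref{lem:tors-of-sbrick}, an object of $\TT$ not in $\UU^{+}$; the converse is an induction on composition length exploiting the minimality of the brick labels. For part (1), ``wide'' is a self-dual condition, so $\HH$ is wide if and only if it is ICE-closed both in $\AA$ and in $\AA^{\op}$; applying part (2) in $\AA^{\op}$ --- where $\HH$ is the heart of $[\TT^\perp,\UU^\perp]$ in $\tors(\AA^{\op})=\torf\AA$ --- and transporting along the anti-isomorphism $(-)^\perp$ shows that the second requirement reads $\UU\supseteq\bigwedge\{\WW'\mid\WW'\covd\TT\}$. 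A purely lattice-theoretic computation in the completely semidistributive lattice $\tors\AA$ then identifies the conjunction of these two requirements with the single equality $\TT=\UU\vee\bigvee\{\UU'\mid\UU\covd\UU'\le\TT\}$. Finally, the explicit form in (1) follows by checking that $\UU_0:=\TTT(\WW)\cap{}^\perp\WW$ is a torsion class --- an intersection of the torsion class $\TTT(\WW)$ with the quotient- and extension-closed ${}^\perp\WW$ --- contained in $\TT_0:=\TTT(\WW)$, with $\WW\subseteq\TT_0\cap\UU_0^\perp$ immediate, while the reverse inclusion uses the classical correspondence \cite{MS} identifying $\WW$ with the largest wide subcategory contained in $\TT_0$, which in these terms is $\WL(\TT_0)=\TT_0\cap\UU_0^\perp$.

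The main obstacle is the last step: matching ``every quotient of an object of $\HH$ by a $\TT$-subobject is $\UU$-torsionfree'' with the covering conditions genuinely requires the brick labelling of the \emph{entire} Hasse quiver of $\tors\AA$ (not just the local data around the interval $[\UU,\TT]$) together with a careful induction on length, whereas the reduction carried out in the first two paragraphs and the passage to the explicit description in (1) are comparatively formal.
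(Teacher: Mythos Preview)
The paper does not supply its own proof of this theorem: it is stated with citations to \cite[Proposition 6.3]{AP} and \cite[Proposition 3.1, Theorem 3.4]{ES} and then used as a black box for the rest of Section~\ref{sec:3}. So there is no in-paper argument to compare your proposal against.

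Your outline is nonetheless broadly in the spirit of those references. The reduction in your first two paragraphs --- $\HH$ is always extension- and image-closed, so ICE (resp.\ wide) amounts to asking that $\AA$-cokernels (resp.\ $\AA$-kernels and $\AA$-cokernels) of maps in $\HH$ remain in $\HH$ --- is exactly how \cite{ES} and \cite{AP} begin. Two points deserve care. First, the step you correctly flag as the crux, matching the cokernel condition with $\TT\le\UU^{+}$, is essentially \cite[Theorem 3.4]{ES}; there the argument is organised through the description $\TTT(\CC)=\Filt(\Fac\CC)$ and the map $\WL$ rather than an explicit induction on length, but your brick-label route is viable. Second, your derivation of (1) from (2) by self-duality is fine up to obtaining the pair of conditions $\TT\le\UU\vee\bigvee\{\UU'\mid\UU\lessdot\UU'\}$ and $\UU\ge\TT\wedge\bigwedge\{\TT'\mid\TT'\lessdot\TT\}$, but the claimed ``purely lattice-theoretic computation'' collapsing their conjunction to the single equality $\TT=\UU\vee\bigvee\{\UU'\mid\UU\lessdot\UU'\le\TT\}$ is not a formal identity in completely semidistributive lattices; in \cite{AP} the various characterisations of wide intervals are proved together using representation-theoretic input, so you should not expect this step to be purely combinatorial. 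Finally, your last line asserts $\WL(\TT_0)=\TT_0\cap\UU_0^\perp$, which is not the definition of $\WL$ and needs its own argument; the cleaner route (as in \cite{AP}) is to check $\WW\subseteq\HH_{[\UU_0,\TT_0]}$ directly, observe that $\HH_{[\UU_0,\TT_0]}\subseteq\WL(\TT_0)$ from the definition of $\WL$, and conclude using $\WL(\TTT(\WW))=\WW$ from Proposition~\ref{prop:wide-related}(2).
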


Therefore, $\tors\AA$, $\torf\AA$, $\wide\AA$, and $\ice\AA$ are all full subposets of $\theart\AA$.
In what follows, we will compute $\theart\AA$, $\wide\AA$, and $\ice\AA$ from the lattice $\tors\AA$ using $\itv(\tors\AA)$ and tools developed in the previous section. The strategy is to use bricks, which can be represented by join-irreducibles in $\tors\AA$ by Theorem \ref{thm:btz}. The reason why this strategy works is due to the following fact.
We refer the reader to Definition \ref{def:filt} for the definition of $\Filt$.
\begin{lemma}\label{lem:heart-brick}
  Let $\CC$ be a torsion heart. Then $\CC = \Filt (\brick\CC)$ holds. In particular, for $\CC_1,\CC_2 \in \theart \AA$, we have $\CC_1 \subseteq \CC_2$ if and only if $\brick\CC_1 \subseteq \brick\CC_2$.
\end{lemma}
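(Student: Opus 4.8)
The inclusion $\Filt(\brick\CC)\subseteq\CC$ is the trivial half: $\brick\CC\subseteq\CC$, and a torsion heart $\CC=\HH_{[\UU,\TT]}=\TT\cap\UU^\perp$ is extension-closed because a torsion class and a torsion-free class both are. The real content is $\CC\subseteq\Filt(\brick\CC)$, and my plan is to prove it by induction on the length of an object $M\in\CC$. Write $\FF:=\UU^\perp$, so $\CC=\TT\cap\FF$. I would first establish two closure facts about $\CC$: it is closed under direct summands (a summand of an object of $\CC$ is both a quotient and a subobject of it), and --- the point that makes the induction run --- for every $M\in\CC$ and every $g\in\End_\AA(M)$ one has $M/t_\TT(\ker g)\in\CC$, where $t_\TT$ is the $\TT$-torsion radical. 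The proof of the latter is a short torsion-theoretic computation: $M/t_\TT(\ker g)$ is a quotient of $M\in\TT$, so it lies in $\TT$; and if it had nonzero $\UU$-torsion there would be $P$ with $t_\TT(\ker g)\subsetneq P\subseteq M$ and $P/t_\TT(\ker g)\in\UU$, forcing $P\in\TT$, while the restriction of $g$ to $P$ kills $t_\TT(\ker g)$, so $g(P)$ is a quotient of $P/t_\TT(\ker g)\in\UU$ and a subobject of $M\in\UU^\perp$, hence $g(P)=0$; then $P\subseteq\ker g$ and so $P\subseteq t_\TT(\ker g)$, contradicting $t_\TT(\ker g)\subsetneq P$.

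For the induction, the cases $M=0$, $M$ a brick (then $M\in\brick\CC$), and $M$ decomposable (split off a summand; both pieces lie in $\CC$, have smaller length, and $M$ is their extension, so use extension-closedness of $\Filt(\brick\CC)$) are straightforward. The remaining case is the crux and the step I expect to be the main obstacle: $M$ indecomposable and not a brick. Here the obvious approach fails --- for $f$ in the (nilpotent) radical $J$ of the local ring $\End_\AA(M)$ the image $\im f$ is a brick lying in $\CC$, but $\ker f$ need not lie in $\TT$ (so $\CC$ is not closed under kernels) and $M/\im f$ need not lie in $\FF$, so one cannot simply peel $\im f$ off. Instead I would fix $f\neq 0$ in $J$ and let $n$ be minimal with $f^n=0$, so $n\geq 2$. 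If $t_\TT(\ker f^j)\neq 0$ for $j=1$ or $j=n-1$, put $N:=t_\TT(\ker f^j)$; then $0\neq N\subsetneq M$ (as $N\subseteq\ker f^j\subsetneq M$), $N\in\CC$, and $M/N\in\CC$ by the fact above, both of smaller length, so induction applies to $N$ and $M/N$ and then to $M$. Otherwise $\ker f$ and $\ker f^{n-1}$ both lie in $\TT^\perp$; then $\im f^{n-1}\subseteq\ker f\in\TT^\perp$, so $M/\ker f^{n-1}\cong\im f^{n-1}\in\TT^\perp$, and $M$ is an extension of two objects of the extension-closed class $\TT^\perp$, hence $M\in\TT^\perp$. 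But $M\in\CC\subseteq\TT$ and $\TT\cap\TT^\perp=0$, so $M=0$ --- impossible. This disposes of the last case and completes the induction, yielding $\CC=\Filt(\brick\CC)$.

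The ``in particular'' clause is then formal: $\Filt(-)$ is inclusion-monotone, so $\brick\CC_1\subseteq\brick\CC_2$ gives $\CC_1=\Filt(\brick\CC_1)\subseteq\Filt(\brick\CC_2)=\CC_2$ via the main equality applied to both hearts, and the reverse implication $\CC_1\subseteq\CC_2\Rightarrow\brick\CC_1\subseteq\brick\CC_2$ is immediate. I note that the argument above is essentially the statement, in the background of \cite{AP, ES, tattar}, that $\HH_{[\UU,\TT]}$ is a length quasi-abelian category whose simple objects are bricks of $\AA$; the core difficulty, both there and here, is precisely that $\CC$ is not an exact abelian subcategory of $\AA$, which is what the torsion-radical identity $M/t_\TT(\ker g)\in\CC$ and the Fitting-type dichotomy on $f$ are designed to circumvent.
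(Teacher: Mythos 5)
Your proof is correct, but it takes a genuinely different route from the paper: for the key equality $\CC=\Filt(\brick\CC)$ the paper simply cites \cite[Lemma 3.10]{DIRRT} and only writes out the formal ``in particular'' step, whereas you give a complete self-contained induction on length. I checked the two ingredients that carry your induction and both are sound. The torsion-radical identity $M/t_\TT(\ker g)\in\CC$ works exactly as you say: the subobject $P$ lies in $\TT$ by extension-closedness, $g(P)$ lies in $\UU$ (quotient of $P/t_\TT(\ker g)$) and in $\UU^\perp$ (subobject of $M$), hence vanishes, so $P\subseteq t_\TT(\ker g)$, a contradiction. The dichotomy on a nonzero nilpotent $f$ is also fine: in the ``otherwise'' branch $t_\TT(\ker f)=0=t_\TT(\ker f^{n-1})$ puts $\ker f^{n-1}$ and $M/\ker f^{n-1}\cong\im f^{n-1}\subseteq\ker f$ in $\TT^\perp$, forcing $M\in\TT^\perp$ and hence $M=0$, which is absurd; the existence of such an $f$ when $M$ is indecomposable and not a brick is Fitting's lemma in a length category, which you use implicitly and should state. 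What your route buys is independence from \cite{DIRRT} (whose Lemma 3.10 establishes the same statement by a comparable length induction); what it costs is a page of torsion-theoretic bookkeeping that the paper avoids by citation. One small inaccuracy in your motivational aside: for $f$ in the radical of $\End_\AA(M)$ the image $\im f$ need not be a brick (take $M=k[x]/(x^3)$ over itself and $f$ multiplication by $x$), but since that sentence only describes the approach you then discard, it does not affect the argument.
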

\begin{proof}
  The equality $\CC = \Filt(\brick\CC)$ is shown in \cite[Lemma 3.10]{DIRRT}. The ``only if" part of the remaining statement is clear. Conversely, if $\brick\CC_1 \subseteq \brick\CC_2$, then we have $\CC_1 = \Filt(\brick\CC_1) \subseteq \CC_2$ since $\CC_2$ is closed under extensions.
\end{proof}

\subsection{Construction}

In this subsection, we will construct some posets from a given completely semidistributive lattice $L$ such that these posets for $\tors\AA$ will be shown to be isomorphic to $\theart\AA$, $\wide\AA$, and $\ice\AA$.
\emph{Throughout this subsection, we denote by $L$ a completely semidistributive lattice.}
Although the constructions and their names are motivated by the lattice of torsion classes, we emphasize that \emph{all the constructions in this subsection only depends on the lattice structure of $L$}.

Let $\2^{\jirr L}$ denote the power set of $\jirr L$. Then $\2^{\jirr L}$ is a complete lattice. Recall that we have a bijection $\kappa \colon \jirr L \isoto \mirr L$, see Definition \ref{def:kappa} and Theorem \ref{thm:kappa-bij}.
\begin{definition}\label{def:jlabel}
  Let $L$ be a completely semidistributive lattice.
  Define a map $\jlabel \colon \itv L \to \2^{\jirr L}$ as follows:
  \[
    \jlabel [a, b] = \{ j \in \jirr L \mid \text{$j \leq b$ and $\kappa(j) \geq a$} \}.
  \]
  For a subset $\II$ of $\itv L$, we denote by $\jlabel \II$ the image of $\II$ under $\jlabel$, which we regard as a full subposet of $\2^{\jirr L}$, namely, $\jlabel\II$ is the poset of sets of completely join-irreducible elements of the form $\jlabel[a,b]$ for some $[a,b] \in \II$, ordered by inclusion.
\end{definition}
In Theorem \ref{thm:jlabel-label}, we will prove that $\jlabel[a,b]$ is precisely the set of join-irreducible labels appearing in the interval $[a,b]$. The notation $\jlabel$ is due to this fact.
In addition, as we shall see in the proof of Theorem \ref{thm:main}, the map $\jlabel$ is a combinatorial analogue of the map $\brick \HH_{(-)}$, that is, $\jlabel [a,b]$ models the set of bricks contained in the heart of $[a,b]$.

Next, we define \emph{wide intervals} and \emph{ICE intervals} of $L$, which correspond to intervals in $\tors\AA$ whose hearts are wide and ICE-closed by Theorem \ref{thm:itv-char}.
\begin{definition}\label{def:wide-ice-itv}
  Let $[a,b] \in \itv L$.
  \begin{enumerate}
    \item $[a,b]$ is a \emph{wide interval} if the following holds:
    \[
      b = a \vee \bigvee \{ a' \in L \mid a \covd a' \leq b \}.
    \]
    \item $[a,b]$ is an \emph{ICE interval} if the following holds:
    \[
      b \leq a \vee \bigvee \{ a' \in L \mid a \covd a' \}.
    \]
  \end{enumerate}
  We denote by $\witv L$ and $\iitv L$ the set of wide intervals and ICE intervals respectively.
\end{definition}

In this way, we obtain the posets $\jlabel(\itv L)$, $\jlabel(\witv L)$, and $\jlabel(\iitv L)$.

\subsection{Posets of subcategories as posets of join-irreducibles}

Now we can state our first main result of this paper.
\begin{theorem}\label{thm:main}
  Let $\AA$ be an abelian length category, and put $L:= \tors\AA$. Then we have the following isomorphisms of posets:
  \begin{enumerate}
    \item $\jlabel(\itv L) \iso \theart\AA$,
    \item $\jlabel(\witv L) \iso \wide\AA$,
    \item $\jlabel(\iitv L) \iso \ice\AA$.
  \end{enumerate}
  In particular, the posets $\theart\AA$, $\wide\AA$, and $\ice\AA$ can be computed only from the lattice structure of $L = \tors\AA$.
\end{theorem}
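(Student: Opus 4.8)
The plan is to transport the statement through the bijection $\brick\AA \isoto \jirr L$ of Theorem~\ref{thm:btz}(1), under which the function $\CC \mapsto \brick\CC$ on torsion hearts becomes the combinatorial map $\jlabel$. Concretely, the key computation --- which I would carry out first --- is: for a brick $B \in \brick\AA$ and an interval $[\UU,\TT] \in \itv L$,
\begin{align*}
  B \in \HH_{[\UU,\TT]}
  &\iff \TTT(B) \leq \TT \text{ and } \kappa(\TTT(B)) \geq \UU \\
  &\iff \TTT(B) \in \jlabel[\UU,\TT].
\end{align*}
This is a routine unravelling of definitions: $B \in \HH_{[\UU,\TT]} = \TT \cap \UU^\perp$ means $B \in \TT$, equivalently $\TTT(B) \subseteq \TT$ by minimality of $\TTT(B)$, together with $B \in \UU^\perp$, equivalently $\UU \subseteq {}^\perp B$; and ${}^\perp B = \kappa(\TTT(B))$ by Theorem~\ref{thm:btz}(3). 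It follows that, under the bijection $\brick\AA \isoto \jirr L$, the set $\brick\HH_{[\UU,\TT]}$ is carried exactly onto $\jlabel[\UU,\TT]$.

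For part (1), composing this bijection with $\brick$ gives a map $\Phi \colon \theart\AA \to \2^{\jirr L}$, $\CC \mapsto (\text{image of } \brick\CC)$. The previous paragraph shows $\Phi(\HH_{[\UU,\TT]}) = \jlabel[\UU,\TT]$; in particular $\Phi$ is well-defined (it does not depend on the chosen presentation of a torsion heart as $\HH_{[\UU,\TT]}$, since $\brick\CC$ depends only on $\CC$), it takes values in $\jlabel(\itv L)$, and it is surjective onto $\jlabel(\itv L)$. By Lemma~\ref{lem:heart-brick}, $\brick$ is injective on $\theart\AA$ and satisfies $\CC_1 \subseteq \CC_2 \iff \brick\CC_1 \subseteq \brick\CC_2$; hence $\Phi$ is injective and $\CC_1 \subseteq \CC_2 \iff \Phi(\CC_1) \subseteq \Phi(\CC_2)$. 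Since $\jlabel(\itv L)$ carries the inclusion order, $\Phi$ is a poset isomorphism, which is (1).

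Parts (2) and (3) follow by restriction. By Theorem~\ref{thm:itv-char}(1), a subcategory is a wide subcategory precisely when it equals $\HH_{[\UU,\TT]}$ for some wide interval $[\UU,\TT]$; thus $\Phi$ restricts to a bijection from $\wide\AA$ onto $\{\jlabel[\UU,\TT] \mid [\UU,\TT] \in \witv L\} = \jlabel(\witv L)$. As $\wide\AA$ is a full subposet of $\theart\AA$, $\jlabel(\witv L)$ a full subposet of $\jlabel(\itv L)$, and $\Phi$ an order isomorphism, this restriction is again a poset isomorphism. The argument for $\ice\AA$ and $\jlabel(\iitv L)$ is word-for-word the same using Theorem~\ref{thm:itv-char}(2). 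The closing assertion of the theorem is then immediate, since $\itv L$, $\witv L$, $\iitv L$ and the maps $\jlabel$ and $\kappa$ are defined purely from the lattice $L = \tors\AA$.

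I do not expect a serious obstacle here: the genuine content --- the identity $\kappa(\TTT(B)) = {}^\perp B$ of Theorem~\ref{thm:btz}(3) and the interval characterizations of Theorem~\ref{thm:itv-char} --- is already available, and what remains is bookkeeping via Lemma~\ref{lem:heart-brick}. The only point needing a little care is the well-definedness of $\Phi$ on $\theart\AA$ (as opposed to on $\itv L$), which is why I would phrase the first paragraph in terms of $\brick\HH_{(-)}$ rather than trying to identify $\jlabel[\UU,\TT]$ directly with a subcategory.
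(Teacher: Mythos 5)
Your proposal is correct and follows essentially the same route as the paper: establish $B \in \HH_{[\UU,\TT]} \iff \TTT(B) \in \jlabel[\UU,\TT]$ via Theorem \ref{thm:btz}, then transport the order through $\brick$ using Lemma \ref{lem:heart-brick}, and restrict to wide/ICE intervals via Theorem \ref{thm:itv-char}. The only cosmetic difference is that you justify $B \in \UU^\perp \iff \kappa(\TTT(B)) \supseteq \UU$ directly from the definition of $^\perp B$, where the paper isolates this as Lemma \ref{lem:kappa-torf} and argues through the anti-isomorphism $\torf\AA \isoto \tors\AA$; both are valid.
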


\begin{remark}
  In this theorem, $\wide\AA$ and $\ice\AA$ are not just posets but are complete lattices, but it is not clear a priori that $\jlabel(\witv L)$ and $\jlabel(\iitv L)$ are complete lattices.
\end{remark}

Before proving this theorem, we begin with the following easy but important observation.
\begin{lemma}\label{lem:kappa-torf}
  Let $\AA$ be an abelian length category, $B \in \brick\AA$, and $\UU \in \tors\AA$. Then we have $B \in \UU^\perp$ if and only if $\kappa(\TTT(B)) \geq \UU$ in $\tors\AA$.
\end{lemma}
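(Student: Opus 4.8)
The plan is to unwind both conditions into a statement about Hom-vanishing against the brick $B$, using the description $\kappa(\TTT(B)) = {}^\perp B$ from Theorem \ref{thm:btz}(3). First I would note that $\kappa(\TTT(B)) \geq \UU$ in $\tors\AA$ means exactly $\UU \subseteq {}^\perp B$, i.e. $\AA(M,B) = 0$ for every $M \in \UU$. On the other hand, $B \in \UU^\perp$ means $\AA(M,B) = 0$ for every $M \in \UU$ by the very definition of $\UU^\perp$. So the two conditions are literally the same, and the lemma is almost immediate once Theorem \ref{thm:btz}(3) is invoked.

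More carefully, I would write: by Theorem \ref{thm:btz}(3), $\kappa(\TTT(B)) = {}^\perp B$. Since the order on $\tors\AA$ is inclusion, $\kappa(\TTT(B)) \geq \UU$ is equivalent to $\UU \subseteq {}^\perp B$. By definition of ${}^\perp B = {}^\perp\{B\}$, this holds if and only if $\AA(M, B) = 0$ for all $M \in \UU$, which is in turn equivalent to $B \in \UU^\perp$ by definition of $\UU^\perp$. This chain of equivalences gives the claim.

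The only subtlety worth flagging is matching the two orthogonal-subcategory conventions: here ${}^\perp B$ is a torsion class (the one appearing as a meet-irreducible via Theorem \ref{thm:btz}(2)), while $\UU^\perp$ is a torsion-free class, and the statement $\UU \subseteq {}^\perp B \Leftrightarrow B \in \UU^\perp$ is just the symmetry of the bifunctor $\AA(-,-)$ being zero. There is no real obstacle here; the content of the lemma is entirely carried by Barnard--Todorov--Zhu's identification $\kappa(\TTT(B)) = {}^\perp B$, and the rest is a definitional rephrasing. I would keep the proof to two or three lines.
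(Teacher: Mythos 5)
Your proof is correct and rests on the same key ingredient as the paper's, namely the identification $\kappa(\TTT(B)) = {}^\perp B$ from Theorem \ref{thm:btz}(3); after that, both arguments are definitional. The only (cosmetic) difference is that the paper routes the final equivalence through $\FFF(B)$ and the anti-isomorphism $^\perp(-)\colon \torf\AA \to \tors\AA$, whereas you observe directly that $\UU \subseteq {}^\perp B$ and $B \in \UU^\perp$ are the same Hom-vanishing condition --- your version is, if anything, slightly more transparent.
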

\begin{proof}
  Since $\UU^\perp$ is a torsion-free class, we clearly have that $B \in \UU^\perp$ if and only if $\FFF(B) \leq \UU^\perp$ in $\torf\AA$. By the poset anti-isomorphism $^\perp(-) \colon \torf\AA \isoto \tors\AA$, this is equivalent to $^\perp \FFF(B) \geq {}^\perp (\UU^\perp) = \UU$.
  Now the assertion follows from $^\perp \FFF(B) = {}^\perp B = \kappa(\TTT(B))$ by Theorem \ref{thm:btz}.
\end{proof}

Now we are ready to prove Theorem \ref{thm:main}.

\begin{proof}[Proof of Theorem \ref{thm:main}]
  Recall that we have a bijection $\brick\AA \isoto \jirr L$ given by $B \mapsto \TTT(B)$ by Theorem \ref{thm:btz}. This bijection induces a poset isomorphism $\Phi \colon \2^{\brick\AA} \isoto \2^{\jirr L}$ between the power sets.
  Consider the following diagram, where $\HH_{(-)} \colon \itv (\tors\AA) \defl \theart \AA$ is given by taking hearts and $\brick \colon \theart\AA \to \2^{\brick\AA}$ is given by $\CC \mapsto \brick\CC$.
  \begin{equation}\label{eq:diagram}
    \begin{tikzcd}
      \itv L \dar["\jlabel"'] \rar["\HH_{(-)}", twoheadrightarrow]
      & \theart\AA \dar["\brick", hookrightarrow]\\
      \2^{\jirr L} & \2^{\brick\AA} \lar["\sim"', "\Phi"]
    \end{tikzcd}
  \end{equation}
  We first show that (\ref{eq:diagram}) is commutative. Let $[\UU,\TT] \in \itv L$. To prove $\Phi(\brick \HH_{[\UU,\TT]}) = \jlabel [\UU,\TT]$, it suffices to show that $B \in \HH_{[\UU,\TT]}$ for a brick $B \in \brick\AA$ if and only if $\TTT(B) \in \jlabel[\UU,\TT]$. This can be proved as follows:
  \begin{align*}
    B \in \HH_{[\UU,\TT]} & \Longleftrightarrow B \in \TT \cap \UU^\perp \\
    & \Longleftrightarrow B \in \TT \text{ and } B \in \UU^\perp \\
    & \Longleftrightarrow \TTT(B) \leq \TT \text{ and } \kappa(\TTT(B)) \geq \UU \\
    & \Longleftrightarrow \TTT(B) \in \jlabel [\UU, \TT]
  \end{align*}
  Here, the third equivalence follows from Lemma \ref{lem:kappa-torf}, and the last from the definition of the map $\jlabel$. Therefore, (\ref{eq:diagram}) is a commutative diagram.

  (1)
  In what follows, we always regard each subset of $\2^{\brick\AA}$ and $\2^{\jirr L}$ as a full subposet of them.
  The map $\HH_{(-)}$ is surjective by the definition of torsion hearts.
  Moreover, $\theart\AA \to \2^{\brick\AA}$ given by $\CC \mapsto \brick\CC$ is an injective poset embedding by Lemma \ref{lem:heart-brick}, thus $\theart\AA$ is isomorphic to its image $\brick(\theart\AA) \subseteq \2^{\brick \AA}$ as posets. Therefore, $\theart\AA$ is isomorphic to the image of $\brick \circ \HH_{(-)}$. Since $\Phi$ is a poset isomorphism, this in turn is isomorphic to the image of $\Phi \circ \brick \circ \HH_{(-)}$. Since (\ref{eq:diagram}) is commutative, the image of $\Phi \circ \brick \circ \HH_{(-)}$ coincides with the image of $\jlabel$, that is, $\jlabel(\itv L)$. Therefore, $\theart\AA$ is isomorphic to $\jlabel(\itv L)$ as posets.

  (2), (3)
  By $\wide\AA, \ice\AA \subseteq \theart\AA$, we have the following two diagrams similar to (\ref{eq:diagram}).
  \[
    \begin{tikzcd}
      \witv L \dar["\jlabel"'] \rar["\HH_{(-)}", twoheadrightarrow]
      & \wide\AA \dar["\brick", hookrightarrow]\\
      \2^{\jirr L} & \2^{\brick\AA} \lar["\sim"', "\Phi"]
    \end{tikzcd}
    \qquad
    \begin{tikzcd}
      \iitv L \dar["\jlabel"'] \rar["\HH_{(-)}", twoheadrightarrow]
      & \ice\AA \dar["\brick", hookrightarrow]\\
      \2^{\jirr L} & \2^{\brick\AA} \lar["\sim"', "\Phi"]
    \end{tikzcd}
  \]
  Here in each case, $\HH_{(-)}$ is well-defined and surjective by Theorem \ref{thm:itv-char} and Definition \ref{def:wide-ice-itv}. Therefore, in the same way as to (1), we can prove that $\wide\AA$ and $\ice\AA$ are isomorphic to $\jlabel(\witv L)$ and $\jlabel (\iitv L)$ as posets respectively.
\end{proof}

\begin{example}
  Let $k$ be a field, $Q$ be the quiver $1 \to 2$, and $kQ$ be the path algebra of $Q$. Then the Hasse quiver of $L := \tors kQ$ is as follows, where we also show its join-irreducible labeling.
  \[
    \begin{tikzcd}[sep = tiny]
      & x \ar[ld, "w"'] \ar[rdd, "z"] \\
      y \ar[dd, "y"'] \\
      & & w \ar[ldd, "w"] \\
      z \ar[rd, "z"'] \\
      & 0
    \end{tikzcd}
  \]
  By simple computation, we have $\jirr L = \{ y, z, w\} = \mirr L$ and $\kappa(y) = z$, $\kappa(z) = w$, $\kappa(w) = y$.
  To compute $\jlabel [z, x]$ for example, we check which $j \in \jirr L$ satisfies $j \leq x$ and $\kappa(j) \geq z$, and we obtain $\jlabel [z,x] = \{y, w\}$. Instead, we can use Theorem \ref{thm:jlabel-label} to compute $\jlabel$ using the join-irreducible labeling.
  \begin{table}[htp]
    \begin{tabular}{c|c}
      Subsets $\II$ of $\itv L$ & $\jlabel \II$ \\ \hline \hline
      \makecell{
        $\itv L = \{ [0,x],[0,y], [0,z], [0,w], [0,0], [w,x], $\\
        $[w,w], [z,x], [z,y], [z,z], [y, x], [y,y], [x,x]\}$
      } &
      $\{\varnothing, \{y\}, \{z\}, \{w\}, \{y, z\}, \{y, w\}, \{y, z, w\} \}$ \\ \hline
      \makecell{
        $\witv L = \{ [0,x], [0,z], [0,w], [0,0], [w,x], $\\
        $[w,w], [z,y], [z,z], [y, x], [y,y], [x,x]\}$
      } &
      $\{ \varnothing, \{y\}, \{z\}, \{w\}, \{y,z,w\} \}$ \\ \hline
      \makecell{
        $\iitv L = \{ [0,x],[0,y], [0,z], [0,w], [0,0], [w,x], $\\
        $[w,w], [z,y], [z,z], [y, x], [y,y], [x,x]\}$
      } &
      $\{ \varnothing, \{y\}, \{z\}, \{w\}, \{y,z\}, \{y, z, w\} \}$
    \end{tabular}
    \caption{Sets of intervals and their images under $\jlabel$}
    \label{tab:a2itv}
  \end{table}
  In Table \ref{tab:a2itv}, we list $\itv L$, $\witv L$, and $\iitv L$, and their images under $\jlabel \colon \itv L \to \2^{\jirr L}$. Hence the second column (viewed as the posets ordered by inclusion) gives the posets isomorphic to $\theart\Lambda$, $\wide\Lambda$, and $\ice\Lambda$ respectively.
\end{example}
See Example \ref{ex:intro-alg} in the introduction for more examples.

\subsection{The map \texorpdfstring{$\jlabel$}{jlabel} in terms of the join-irreducible labeling}

In this subsection, we give a more intuitive description of the map $\jlabel \colon \itv L \to \2^{\jirr L}$ for a completely semidistributive lattice: $\jlabel [a,b]$ is precisely the set of all join-irreducible labels appearing in $[a,b]$.
\begin{theorem}\label{thm:jlabel-label}
  Let $L$ be a completely semidistributive lattice and $[a,b] \in \itv L$. Then we have the following equality:
  \[
    \jlabel [a,b] = \{ \gamma(x \to y) \mid a \leq y \covd x \leq b \},
  \]
  where $\gamma \colon \Hasse_1 L \to \jirr L$ is the join-irreducible labeling  (Definition \ref{def:label}).
\end{theorem}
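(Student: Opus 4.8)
The plan is to prove the two inclusions separately, translating each side into a statement about the join-irreducible labeling $\gamma$ and using Lemma~\ref{lem:label-char} together with Theorem~\ref{thm:kappa-bij}. Write $J := \{ \gamma(x \to y) \mid a \leq y \covd x \leq b \}$ for the right-hand side.

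First I would show $J \subseteq \jlabel[a,b]$. Take a Hasse arrow $x \to y$ with $a \leq y \covd x \leq b$ and put $j := \gamma(x \to y)$. By Lemma~\ref{lem:label-char}(1) we have $y \vee j = x$ and $y \wedge j = j_*$; in particular $j \leq x \leq b$, giving one of the two conditions in the definition of $\jlabel[a,b]$. For the other, I must check $\kappa(j) \geq a$. By Theorem~\ref{thm:kappa-bij}(2), $\mu(x \to y) = \kappa(\gamma(x \to y)) = \kappa(j)$, and by Lemma~\ref{lem:label-char}(2) the element $m := \mu(x \to y)$ satisfies $x \wedge m = y$; hence $m \geq y \geq a$, i.e. $\kappa(j) \geq a$. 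So $j \in \jlabel[a,b]$.

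Next, the reverse inclusion $\jlabel[a,b] \subseteq J$, which I expect to be the main obstacle since it requires actually locating an arrow in $[a,b]$ carrying the label $j$. Let $j \in \jirr L$ with $j \leq b$ and $\kappa(j) \geq a$, and set $m := \kappa(j)$. The idea is to exhibit a Hasse arrow inside $[a,b]$ labeled $j$ by intersecting the ``canonical'' arrow $j \to j_*$ with the interval. Concretely, consider $y := a \vee (b \wedge m)$ and note $y \geq a$. I claim $y \vee j$ covers $y$ and $\gamma(y \vee j \to y) = j$, and that $y \vee j \leq b$. For the last point: $b \wedge m \leq b$ and $j \leq b$, and $a \leq \kappa(j)^* = j \vee m$ (Theorem~\ref{thm:kappa-bij}(3)), so one checks $a \vee (b \wedge m) \vee j \leq b$ using $a \leq j \vee m$ and $a \leq b$ — here the completely semidistributive laws may be needed to control the joins. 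To identify the label, by Lemma~\ref{lem:label-char}(1) it suffices to verify $y \wedge j = j_*$: since $y \wedge j \leq b \wedge m \wedge j$... more carefully, $y \wedge j = (a \vee (b\wedge m)) \wedge j$, and using $j \wedge m = j_*$ (Theorem~\ref{thm:kappa-bij}(3)), $a \leq m$, so $a \vee (b \wedge m) \leq m$, whence $y \wedge j \leq m \wedge j = j_*$; and $y \wedge j \geq a \wedge j \geq$ (well, one shows $j_* \leq y$ directly: $j_* \leq b$ since $j_* < j \leq b$, and $j_* \leq m$ since $j_* = j \wedge m \leq m$, so $j_* \leq b \wedge m \leq y$), giving $j_* \leq y \wedge j$. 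Combined with $y \wedge j \leq j_*$ and $y \wedge j \leq j$, and since $j$ is join-irreducible so $j_*$ is the unique element covered by $j$, we get $y \wedge j = j_*$ (ruling out $y \wedge j = j$ because that would force $j \leq y \leq m$, contradicting $j \wedge m = j_* \neq j$). Then $y \vee j$ covers $y$ with label $j$, and $a \leq y \covd y \vee j \leq b$, so $j \in J$.

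The delicate part throughout is justifying that the constructed meet/join $y := a \vee (b \wedge m)$ really does produce a \emph{covering} relation $y \covd y \vee j$ with the right label, and that $y \vee j \leq b$; this is exactly where one leans on Lemma~\ref{lem:label-char}, on part (3) of Theorem~\ref{thm:kappa-bij} (the identities $j \vee \kappa(j) = \kappa(j)^*$, $j \wedge \kappa(j) = j_*$), and possibly on the semidistributive laws to rearrange infinite-looking joins. I would organize the write-up as: (i) the easy inclusion, a few lines; (ii) a short lemma isolating the claim ``if $j \leq b$ and $\kappa(j) \geq a$ then setting $y := a \vee (b \wedge \kappa(j))$ one has $a \leq y \covd y\vee j \leq b$ and $\gamma(y \vee j \to y) = j$''; (iii) conclude. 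An alternative, perhaps cleaner, route for (ii) is to pass to the interval sublattice $[a,b]$ — which is again completely semidistributive — and check that $\gamma$ restricted to $\Hasse_1[a,b]$ is compatible with $\gamma$ on $L$ for arrows with the relevant labels; but the direct computation above should suffice and avoids developing sublattice machinery.
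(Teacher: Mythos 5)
Your first inclusion is correct and coincides with the paper's argument. For the reverse inclusion, your witness $y := a \vee (b\wedge\kappa(j))$ is a legitimate variant of the paper's (which takes the arrow $a\vee j \to (a\vee j)\wedge\kappa(j)$, via its Lemma \ref{lem:itv-label}), but there is a genuine gap at exactly the point you yourself flag as ``the delicate part'': you never prove that $y\vee j$ covers $y$; you prove $y\wedge j=j_*$ and then assert the covering. This cannot be skipped. First, $\gamma$ is defined only on Hasse arrows, so Lemma \ref{lem:label-char}(1) identifies the label only \emph{after} the covering is established. Second, the implication ``$y\wedge j=j_*$ (equivalently $j_*\le y\le\kappa(j)$) implies $y\covd y\vee j$'' is simply false in a completely semidistributive lattice: in the weak order on $S_3$ with $j=s$ one has $j_*=e$ and $\kappa(s)=ts$, and $y=t$ satisfies $t\wedge s=e=s_*$, yet $t\vee s=w_0$ does not cover $t$ (since $t<ts<w_0$). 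So the covering genuinely depends on the particular $y$ you constructed, and an argument is required; this is precisely why the paper isolates the covering statement as a separate lemma before deducing the theorem.

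The good news is that your construction does close the gap, in a few lines you did not write: suppose $y\le z\le y\vee j$ with $z\ne y\vee j$. Then $j\not\le z$, so $j_*=y\wedge j\le z\wedge j<j$ forces $z\wedge j=j_*$, whence $z\le\kappa(j)$ by the maximality in the definition $\kappa(j)=\max\{x\mid x\wedge j=j_*\}$; combined with $z\le y\vee j\le b$ this gives $z\le b\wedge\kappa(j)\le y$, so $z=y$. You should state and prove this as the lemma you sketch in step (ii) rather than assert it. Two minor points: $y\vee j\le b$ follows immediately from $a\le b$, $b\wedge\kappa(j)\le b$ and $j\le b$ --- no semidistributive laws and no appeal to $a\le j\vee\kappa(j)$ are needed; and your verification of $y\wedge j=j_*$ (including ruling out $y\wedge j=j$) is fine as written.
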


To prove this, we need the following lemma. This is a lattice-theoretical analogue of \cite[Theorem 3.4]{DIRRT}.

\begin{lemma}\label{lem:itv-label}
  Let $L$ be a completely semidistributive lattice, $x \in L$, and $j \in \jirr L$. Suppose that $x \leq \kappa(j)$ holds. Then there is a Hasse arrow $x \vee j \to (x \vee j) \wedge \kappa(j)$ in $L$, and its join-irreducible label is $j$.
\end{lemma}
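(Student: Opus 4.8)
The plan is to name the relevant elements and then reduce everything to Lemma~\ref{lem:label-char}(1). Write $m := \kappa(j)$, $a := x \vee j$ and $b := a \wedge m = (x \vee j) \wedge m$; the goal is to show that $a \to b$ is a Hasse arrow with $\gamma(a \to b) = j$. By Lemma~\ref{lem:label-char}(1), once we know that $a$ covers $b$ it suffices to verify the two identities $b \vee j = a$ and $b \wedge j = j_*$. So I would split the argument into (i) the three elementary facts $b < a$, $b \vee j = a$, $b \wedge j = j_*$, and (ii) the covering relation $a \to b$, which I expect to be the only nontrivial point.

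For (i), recall from Definition~\ref{def:kappa} (equivalently Theorem~\ref{thm:kappa-bij}(3)) that $m = \kappa(j)$ satisfies $m \wedge j = j_*$ and, crucially, that $m$ is the \emph{maximum} element $w$ of $L$ with $w \wedge j = j_*$. The hypothesis $x \le m$ gives $x \le a \wedge m = b$; since $j_* = j \wedge m \le m$ and $j_* \le j \le a$, also $j_* \le a \wedge m = b$; and trivially $b \le m$. From $x \le b$ we get $a = x \vee j \le b \vee j \le a$, so $b \vee j = a$. From $j_* \le b \le m$ we get $j_* \le b \wedge j \le m \wedge j = j_*$, so $b \wedge j = j_*$. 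Finally, if $b = a$ then $a = a \wedge m \le m$, forcing $j \le m$ and hence $j = j \wedge m = j_*$, contradicting $j \in \jirr L$; so $b < a$.

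The hard part is (ii), showing that $a$ covers $b$, and the key insight is that the defining maximality of $\kappa$ is exactly what makes this work. I would take an arbitrary $z$ with $b \le z \le a$ and show $z \in \{b, a\}$. If $j \le z$, then $z \ge b \vee j = a$, so $z = a$. Otherwise $z \wedge j < j$, and since $j$ is completely join-irreducible (so $j_*$ is the largest element strictly below $j$) this forces $z \wedge j \le j_*$; combined with $j_* \le b \le z$ and $j_* \le j$ we obtain $z \wedge j = j_*$. Now the maximality property of $m = \kappa(j)$ yields $z \le m$, and together with $z \le a$ this gives $z \le a \wedge m = b$, i.e. $z = b$. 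Hence the interval $[b,a]$ is trivial, $a \to b$ is a Hasse arrow, and Lemma~\ref{lem:label-char}(1) applied to it, using the two identities from (i), gives $\gamma(a \to b) = j$. The only real obstacle is recognising that the maximality characterisation of the kappa map collapses every intermediate element down to $b$; this is a lattice-theoretic distillation of the argument behind \cite[Theorem~3.4]{DIRRT}, and once it is in place the rest is a short chase through the lattice operations.
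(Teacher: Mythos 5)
Your proof is correct and follows essentially the same route as the paper's: the covering relation $x\vee j \cov (x\vee j)\wedge\kappa(j)$ is obtained by the same dichotomy on $z\wedge j$ (either $j\le z$ or $z\wedge j=j_*$) together with the maximality defining $\kappa(j)$. The only cosmetic difference is at the end, where the paper first computes the meet-irreducible label $\mu(p)=\kappa(j)$ and then deduces $\gamma(p)=j$ via Theorem~\ref{thm:kappa-bij}, whereas you verify the two conditions of Lemma~\ref{lem:label-char}(1) directly from the identities $b\vee j=a$ and $b\wedge j=j_*$ established in your step (i); both are immediate.
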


\begin{proof}
  Put $m := \kappa(j)$ for simplicity. We first show that there is a Hasse arrow $p \colon x \vee j \to (x \vee j) \wedge m$ in $L$.
  Suppose that $(x \vee j) \wedge m \leq y \leq x \vee j$ holds for $y \in L$. Then by applying $j \wedge (-)$, we obtain $j \wedge m \leq j \wedge y \leq j$. Since we have $j \wedge m = j_*$ by the definition of $m = \kappa(j)$, we must have either $j \wedge y = j \wedge m$ or $j \wedge y = j$.
  If we have $j \wedge y = j \wedge m$, then $y \leq j \wedge y = j \wedge m \leq m$ holds, thus $y \leq (x \vee j) \wedge m$ holds, which implies $y = (x \vee j) \wedge m$. On the other hand, if we have $j \wedge y = j$, then $j \leq y$ holds. In addition, we have $x \leq m$ by the assumption, hence $x \leq (x\vee j) \wedge m \leq y$ holds. Therefore, we obtain $x \vee j \leq y$, which implies $y = x \vee j$.

  Next, we check $\gamma(p) = j$. To this aim, we first compute the meet-irreducible label. We have $(x \vee j) \vee m = j \vee m = m^*$ by $x \leq m$ and Theorem \ref{thm:kappa-bij}. Thus Lemma \ref{lem:label-char} implies $\mu(p) = m$. Therefore, Theorem \ref{thm:kappa-bij} implies $\gamma(p) = \kappa^{d}(m) = j$.
\end{proof}

\begin{proof}[Proof of Theorem \ref{thm:jlabel-label}]
  Denote by $J$ the right hand side. We will prove $\jlabel [a,b] = J$.

  Let $j \in J$. Then there is a Hasse arrow $x \to y$ satisfying $\gamma(x \to y) = j$ and $a \leq y < x \leq b$. By the definition of $\gamma$, we have $y \vee j = x$, hence $j \leq y \vee j = x \leq b$ holds. On the other hand, Theorem \ref{thm:kappa-bij} implies $\mu(x \to y) = \kappa(j)$, hence Lemma \ref{lem:label-char} implies $x \wedge \kappa(j) = y$. Therefore, we obtain $a \leq y = x \wedge \kappa(j) \leq \kappa(j)$. Thus we have $j \in \jlabel[a,b]$.

  Conversely, let $j \in \jlabel [a,b]$, that is, $j \in \jirr L$, $j \leq b$, and $a \leq \kappa(j)$ hold. Then Lemma \ref{lem:itv-label} implies that there is a Hasse arrow $p \colon a \vee j \to (a \vee j) \wedge \kappa(j)$ whose join-irreducible label is $j$. Note that we have $a \leq (a \vee j) \wedge \kappa(j)$ by $a \leq \kappa(j)$ and $a \vee j \leq b$ by $j \leq b$. Hence $p$ lies inside $[a,b]$, which shows $j = \gamma(p) \in J$.
\end{proof}

\section{The kappa order, the core label order, and \texorpdfstring{$\wide\AA$}{wide A}}\label{sec:4}
In this section, we will give two more descriptions of $\wide\AA$.
More precisely, for a given completely semidistributive lattice $L$, we first consider the set $L_0$ of elements with \emph{canonical join representations}. Then we provide two poset structures on $L_0$: the \emph{kappa order $\leq_\kappa$} and the \emph{core label order} $\leq_\clo$, where the former is defined using the \emph{extended kappa map} introduced by Barnard--Todorov--Zhu, and the latter using the join-irreducible labeling. 
Then we will show that these posets are isomorphic to $\wide\AA$ for $L = \tors\AA$. In particular, two orders coincide in this case, which is not true even for finite congruence-uniform lattices (Example \ref{ex:not-same}).

The core label order is a generalization of the poset structure on finite congruence-uniform lattices (also known as the \emph{shard intersection order}) introduced by Reading \cite{reading-shard,reading-region} and studied by several papers such as \cite{muhle,biclosed,cjc-biclosed}.
Thus our result can be regarded as another characterization of the core label order using the extended kappa map when $L$ is isomorphic to $\tors\AA$ for some abelian length category $\AA$. This class contains important two classes: the weak order of the finite Weyl group (the resulting core label order is Reading's original shard intersection order), and the Cambrian lattice of simply-laced Dynkin type (the resulting core label order is isomorphic to the lattice of non-crossing partitions).
In Section \ref{sec:combi}, we will see some consequences for these classes.

\subsection{Canonical join representation and the extended kappa map}
In this subsection, we introduce canonical join representations and explain the extended kappa map given in \cite{BTZ}, and study their basic properties.
\begin{definition}\label{def:cjr}
  Let $L$ be a complete lattice and $x$ an element of $L$.
  \begin{enumerate}
    \item A \emph{join representation of $x$} is an expression of the form $x = \bigvee A$ for some subset $A \subseteq L$.
    \item Let $x = \bigvee A = \bigvee B$ be two join representations. We say that \emph{$A$ refines $B$} if for every $a \in A$ there is some $b \in B$ with $a \leq b$.
    \item A join representation $x = \bigvee A$ is a \emph{canonical join representation} if it satisfies the following conditions:
    \begin{enumerate}
      \item $A$ refines \emph{every} join representation of $x$, that is, if $x = \bigvee B$, then $A$ refines $B$.
      \item $A$ is an antichain, that is, $a_1 \leq a_2$ with $a_1, a_2 \in A$ implies $a_1 = a_2$.
    \end{enumerate}
  \end{enumerate}
  It is easily checked that a canonical join representation of $x \in L$ is unique if it exists. We denote by $\CJR(x) := A$ if $x$ has a canonical join representation $x = \bigvee A$. We define a subset $L_0$ of $L$ as follows:
  \[
    L_0 = \{ x \in L \mid \text{$\CJR(x)$ exists} \}.
  \]
  Dually, we define \emph{canonical meet representation}, $\CMR(x)$ for $x \in L$ and
  \[
    L^0 = \{ x \in L \mid \text{$\CMR(x)$ exists} \}.
  \]
\end{definition}

\begin{remark}
  Consider the following condition for a join representation $x = \bigvee A$:
  \begin{itemize}
    \item[(b)$'$] $x = \bigvee A$ is \emph{irredundant}, that is, for every proper subset $A' \subsetneq A$, we have $\bigvee A' < x$.
  \end{itemize}
  Then it is easily checked that $x = \bigvee A$ satisfies (a) and (b) if and only if it satisfies (a) and (b)$'$.
\end{remark}

\begin{remark}\label{rem:CJR-def}
  Let us emphasize some ambiguity in the definition of canonical join representations for infinite lattices in the literature.
  This version of definition seems to be standard in lattice theory, e.g. \cite{AN, free-lattice, Gor, JR, reading-region, RST}.
  On the other hand, our definition is different from the definition in \cite{BCZ,BTZ} for the infinite case. In these papers, a join representation $x = \bigvee A$ is called a canonical join representation if
  \begin{itemize}
    \item[(b)$'$] $x = \bigvee A$ is irredundant.
    \item[(a)$'$] If $x = \bigvee B$ is an \emph{irredundant} join representation, then $A$ refines $B$. 
  \end{itemize}
  If $L$ is a finite lattice, then every join representation is refined by some irredundant join representation (by removing unnecessary elements), thus our definition coincides with theirs.
  However, if $L$ is an infinite lattice, a join representation is canonical in their sense if it is so in our sense, but the converse fails.
  For example, $x = \bigvee \{ x \}$ is a canonical join representation in our sense if and only if $x$ is \emph{completely} join-irreducible, while it is so in their definition if and only if $x$ is join-irreducible.
  As we will see in Theorems \ref{thm:cjr-sbrick} and \ref{thm:ext-kappa-bij}, when we consider the extended kappa map and canonical join representations and study their relation to semibricks and wide subcategories, our definition seems to be more suitable.
\end{remark}

There is the following characterization of elements with canonical join representations due to Gorbunov, which we shall need later.
\begin{proposition}[{\cite[Theorem 1]{Gor}}]\label{prop:cjr-char}
  Let $L$ be a completely semidistributive lattice and $x \in L$. Then the following are equivalent.
  \begin{enumerate}
    \item $x$ has a canonical join representation.
    \item For every $y \in L$ with $y < x$, there exists $x' \in L$ satisfying $y \leq x' \covd x$.
  \end{enumerate}
  In particular, if $L$ is finite, then every element has a canonical join representation.
\end{proposition}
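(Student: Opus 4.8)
The plan is to prove $(1)\Leftrightarrow(2)$ directly, the main tools being the join-irreducible labeling $\gamma$, the kappa map, and Lemma \ref{lem:itv-label}.

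For $(1)\Rightarrow(2)$, suppose $x=\bigvee A$ is the canonical join representation and let $y<x$. One first records that every $a\in A$ is \emph{completely} join-irreducible: writing $a=\bigvee C$ gives a join representation $x=\bigvee((A\setminus\{a\})\cup C)$, and refinement by $A$ together with the antichain property of $A$ forces $a\in C$. Since $y<\bigvee A$, choose $a\in A$ with $a\not\leq y$ and set $y_a:=y\vee\bigvee(A\setminus\{a\})$. Using refinement and the antichain property one checks $y_a\vee a_*<x$ (otherwise $x$ would have a join representation refined by $A$ with $a$ below none of its parts, contradicting the antichain property). If $a\leq y_a\vee a_*$ then $x=a\vee\bigvee(A\setminus\{a\})\leq y_a\vee a_*<x$, a contradiction, so $(y_a\vee a_*)\wedge a<a$, hence equals $a_*$ because $a$ is completely join-irreducible. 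Thus $y_a\vee a_*\leq\kappa(a)$, in particular $y_a\leq\kappa(a)$. Now Lemma \ref{lem:itv-label}, applied with $y_a$ in place of $x$ and $j=a$, produces a Hasse arrow $x=y_a\vee a\to(y_a\vee a)\wedge\kappa(a)=x\wedge\kappa(a)$; hence $x':=x\wedge\kappa(a)$ satisfies $x'\covd x$, and $y\leq y_a\leq x\wedge\kappa(a)=x'$, as required.

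For $(2)\Rightarrow(1)$, assume (2). If $x$ is the least element, then $x=\bigvee\varnothing$ is its canonical join representation; otherwise put $A:=\{\gamma(x\to x')\mid x'\covd x\}$, nonempty by (2). One checks the three defining properties. (a) $\bigvee A=x$: if $\bigvee A=:s<x$, then by (2) there is $x'\covd x$ with $s\leq x'$, so $\gamma(x\to x')\leq s\leq x'$; but $x'\vee\gamma(x\to x')=x$ by definition of $\gamma$, forcing $x'=x$, a contradiction. (b) $A$ refines every join representation $x=\bigvee B$: if some $\gamma(x\to x')$ lies below no $b\in B$, then $x'\vee b\neq x$ for all $b$ (else $\gamma(x\to x')\leq b$ by minimality of the label), so $x'\vee b=x'$, i.e.\ $b\leq x'$, hence $x=\bigvee B\leq x'$, a contradiction. (c) $A$ is an antichain. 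Granting (a)--(c), $x=\bigvee A$ is the canonical join representation. Finally, if $L$ is finite then $\{z\mid y\leq z<x\}$ is finite, and a maximal element of it is a lower cover of $x$, so (2) holds for every $x$.

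The delicate point is (c): the labels $\gamma(x\to x')$ attached to distinct lower covers $x'$ of $x$ are pairwise incomparable, and this is where semidistributivity is needed beyond what makes $\gamma$ well defined. I would argue by contradiction: for distinct lower covers $x_1,x_2$ with $j_i:=\gamma(x\to x_i)$ and (say) $j_1<j_2$, complete join-irreducibility of $j_2$ and Lemma \ref{lem:label-char} give $j_1\leq(j_2)_*\leq x_2$ while $j_1\not\leq x_1$, and $\kappa(j_1)$, $\kappa(j_2)$ are then forced to be incomparable; a contradiction is extracted from the identities $x_1\vee j_1=x=x_1\vee x_2$ and $x_i\wedge j_i=(j_i)_*$ using both semidistributivity axioms together with Theorem \ref{thm:kappa-bij}(3). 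Alternatively, (c) is part of Gorbunov's original argument and may be taken from \cite[Theorem 1]{Gor}. I expect making (c) fully rigorous to be the step requiring the most care.
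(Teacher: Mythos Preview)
The paper does not give its own proof of this proposition; it simply cites \cite[Theorem~1]{Gor}. So there is no in-paper argument to compare against, and your attempt should be judged on its own merits.

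Your proof of $(1)\Rightarrow(2)$ is correct and clean. For $(2)\Rightarrow(1)$, steps (a) and (b) are fine. The only issue is (c), which you flag as ``delicate'' and treat with a vague sketch invoking both semidistributivity laws and the kappa map; as written, that sketch does not arrive at a contradiction, and you fall back on citing Gorbunov. In fact (c) is far easier than you suspect and needs no further semidistributivity beyond the existence of $\gamma$. Suppose $x_1\neq x_2$ are lower covers of $x$ with labels $j_i=\gamma(x\to x_i)$ and $j_1\leq j_2$. Since $x_1,x_2$ are distinct elements covered by $x$, one has $x_1\vee x_2=x$ (otherwise $x_1\leq x_1\vee x_2<x$ forces $x_1=x_1\vee x_2\geq x_2$, and symmetrically $x_2\geq x_1$). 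Hence $x_1\in\{z\mid x_2\vee z=x\}$, so by minimality $j_2\leq x_1$. Then $j_1\leq j_2\leq x_1$ gives $x_1\vee j_1=x_1$, contradicting $x_1\vee j_1=x$. Thus the labels of distinct lower covers are pairwise incomparable, and $A$ is an antichain. With this one-line fix, your argument is complete and self-contained.
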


We can easily check that each element appearing in canonical join representations  is completely join-irreducible:
\begin{lemma}\label{lem:joinand-irred}
  Let $L$ be a complete lattice and $x = \bigvee A$ is a canonical join representation. Then each $a \in A$ is completely join-irreducible.
\end{lemma}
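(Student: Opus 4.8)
The plan is to argue by contradiction: suppose some $a_0 \in A$ is \emph{not} completely join-irreducible, and derive a violation of either minimality (condition (a)) or the antichain condition (b) of the canonical join representation $x = \bigvee A$. Since $a_0$ is not completely join-irreducible, by the remark following the definition of $a_*$ we have $a_0 = (a_0)_* = \bigvee \{ y \in L \mid y < a_0 \}$, so $a_0$ admits a join representation $a_0 = \bigvee Y$ with $Y = \{ y \in L \mid y < a_0\}$ and every $y \in Y$ strictly below $a_0$.

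The key step is then to substitute this representation of $a_0$ back into $x = \bigvee A$. Set $B := (A \setminus \{a_0\}) \cup Y$. Then $\bigvee B = \bigl(\bigvee (A\setminus\{a_0\})\bigr) \vee \bigl(\bigvee Y\bigr) = \bigl(\bigvee(A\setminus\{a_0\})\bigr)\vee a_0 = \bigvee A = x$, so $B$ is a join representation of $x$. By condition (a), $A$ refines $B$: in particular for $a_0 \in A$ there is some $b \in B$ with $a_0 \leq b$. The element $b$ cannot lie in $Y$, since every element of $Y$ is strictly below $a_0$; hence $b \in A \setminus \{a_0\}$, and we get $a_0 \leq b$ with $a_0, b \in A$ and $a_0 \neq b$, contradicting that $A$ is an antichain (condition (b)). This forces $a_0$ to be completely join-irreducible, proving the lemma.

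I do not expect any serious obstacle here; the only point requiring a little care is making sure the representation $a_0 = \bigvee\{ y : y < a_0\}$ is legitimate — this is exactly the characterization of failure of complete join-irreducibility recorded just after the definition of $a_*$, and it uses that $L$ is a complete lattice so that the join exists. One should also note that the argument genuinely uses the strong refinement condition (a) (refining \emph{every} join representation), not merely refinement of irredundant ones; this is consistent with the paper's chosen definition of canonical join representations and with Remark \ref{rem:CJR-def}. Everything else is a routine manipulation of joins.
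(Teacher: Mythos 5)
Your proof is correct and uses essentially the same mechanism as the paper's: substitute a join representation of the element $a$ into $x = \bigvee A$, invoke the refinement condition (a) to find $b$ with $a \leq b$, and rule out $b \in A\setminus\{a\}$ via the antichain condition (b). The only cosmetic difference is that you argue by contradiction using the specific representation $a_0 = \bigvee\{y \mid y < a_0\}$ (via the equivalence with $a_0 = (a_0)_*$ noted after the definition), whereas the paper runs the same argument directly on an arbitrary join representation $a = \bigvee B$.
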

\begin{proof}
  Suppose $a = \bigvee B$ for $B \subseteq L$. Then we have
  $x = a \vee \bigvee (A \setminus \{ a \}) = \bigvee \big(B \cup (A \setminus \{a\}) \big)$. Since $A$ refines every join representation, we have $a \leq b$ for some $b \in B$ or $b \in A \setminus \{ a\}$, but the latter is impossible since $A$ is an antichain. Thus $b \in B$, which implies $a \leq b \leq \bigvee B = a$, namely, $a = b \in B$. Thus $a$ is completely join-irreducible.
\end{proof}
By this property, we can extend the kappa map $\kappa \colon \jirr L \to \mirr L$ in a completely semidistributive lattice as follows.
\begin{definition}[{\cite[Definition 1.1.3]{BTZ}}]
  Let $L$ be a completely semidistributive lattice. Then we define the \emph{extended kappa map $\exk \colon L_0 \to L$} as follows:
  \[
    \exk (x) = \bigwedge \{ \kappa(j) \mid j \in \CJR(x) \}.
  \]
\end{definition}
If $j$ is completely join irreducible, then $j = \bigvee\{j\}$ is a canonical join representation, hence we have $\exk(j) = \kappa (j)$.
Note that due to the difference of the definition of canonical join representations between this paper and \cite{BTZ} (Remark \ref{rem:CJR-def}), the domain of $\exk$ differ from theirs.

We will need some properties of canonical join representations later.
First, the following simple observation is quite useful.
\begin{lemma}[{\cite[Lemma 2.57]{free-lattice}}]\label{lem:kappa}
  Let $L$ be a completely semidistributive lattice, $x \in L$, and $j \in \jirr L$. Then $x \leq \kappa(j)$ if and only if $x \vee j_* \neq x \vee j$ holds.
\end{lemma}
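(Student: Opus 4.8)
The statement to prove is Lemma~\ref{lem:kappa}: for a completely semidistributive lattice $L$, an element $x \in L$, and $j \in \jirr L$, we have $x \leq \kappa(j)$ if and only if $x \vee j_* \neq x \vee j$.

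\textbf{Approach.} This is a purely lattice-theoretic statement, so the plan is to unwind the defining property of $\kappa(j)$, namely that $\kappa(j) = \max\{y \in L \mid y \wedge j = j_*\}$, and combine it with complete semidistributivity. The cleanest route is to prove the contrapositive of each direction, or equivalently to show $x \leq \kappa(j) \iff x \vee j_* = x \vee j$.

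\textbf{Key steps.} First, for the ``only if'' direction, suppose $x \leq \kappa(j)$. Since $j_* \leq j$ always, we get $x \vee j_* \leq x \vee j$. For the reverse inequality, I would intersect with $j$: we have $j \wedge (x \vee j) \geq j$ trivially so that is not directly useful; instead I would argue that $x \vee j_* \geq j$ fails in general, so a more careful argument is needed. The right move: since $x \leq \kappa(j)$ and $j_* \leq \kappa(j)$ (because $j \wedge \kappa(j) = j_*$ forces $j_* \leq \kappa(j)$), we have $x \vee j_* \leq \kappa(j)$. Now compute $(x \vee j_*) \wedge j$: it lies between $j_*$ and $j$; since it is $\leq \kappa(j) \wedge j = j_*$, we get $(x \vee j_*)\wedge j = j_*$, i.e. $x \vee j_* \in \{y \mid y \wedge j = j_*\}$. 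That alone does not give $x \vee j_* = x \vee j$; rather, I need to show $x \vee j \leq x \vee j_*$. For this, observe $x \vee j = (x \vee j_*) \vee j$, and I want to absorb the $j$. Use semidistributivity in the form of the meet: actually the slick way is via Theorem~\ref{thm:kappa-bij}(3), which gives $j \vee \kappa(j) = \kappa(j)^*$, combined with the fact that $x \vee j_* \leq \kappa(j)$ and $x \vee j \not\leq \kappa(j)$ would be needed for the converse — so let me instead structure both directions around the equivalence $x \leq \kappa(j) \iff x \vee j \leq \kappa(j)^*$ and $x \vee j \neq x \vee j_*$.

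\textbf{Revised plan.} I would prove: (i) if $x \vee j_* = x \vee j$ then $j \leq x \vee j_*$, and since $x \vee j_* \wedge j$ must then equal $j$, whereas if $x \not\leq \kappa(j)$ we derive a contradiction using that $\kappa(j)$ is the maximum element whose meet with $j$ is $j_*$; (ii) conversely, if $x \leq \kappa(j)$, then $x \vee j_* \leq \kappa(j)$, so $(x\vee j_*) \wedge j \leq \kappa(j) \wedge j = j_*$, giving $(x \vee j_*) \wedge j = j_*$, in particular $j \not\leq x \vee j_*$, hence $x \vee j_* \neq x \vee j$ since $j \leq x \vee j$. Direction (ii) is therefore short. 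For direction (i), assume $x \vee j_* \ne x \vee j$; since $j_* \covd j$ and $x \vee j_* < x \vee j$, and since the interval $[x \vee j_*, x \vee j]$ — hmm, this needs $x \vee j$ to cover $x \vee j_*$, which is a standard consequence in semidistributive lattices of $j_* \covd j$; granting that, $\gamma(x \vee j \to x \vee j_*) = j$ by Lemma~\ref{lem:label-char} (checking $(x \vee j_*) \vee j = x \vee j$ and $(x \vee j_*) \wedge j = j_*$, the latter because $x \vee j_* \ne x \vee j$ forces $j \not\le x\vee j_*$ so the meet is the proper lower bound $j_*$), hence $\mu(x \vee j \to x \vee j_*) = \kappa(j)$, so $(x \vee j) \wedge \kappa(j) = x \vee j_*$, which gives $x \leq x \vee j_* \leq \kappa(j)$.

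\textbf{Main obstacle.} The delicate point is establishing that $x \vee j$ covers $x \vee j_*$ (equivalently that the interval has no strictly intermediate element) so that Lemma~\ref{lem:label-char} applies; this is exactly the content of Lemma~\ref{lem:itv-label}-type reasoning but in a degenerate form, and I would either invoke that lemma directly (with $\kappa(j)$ in the role there) or reprove the short covering argument: if $x \vee j_* \leq y \leq x \vee j$ then $j_* \leq j \wedge y \leq j$, so $j \wedge y \in \{j_*, j\}$; if $j \wedge y = j$ then $j \leq y$ so $x \vee j \leq y$; if $j \wedge y = j_*$ then — and here I'd use that $y \leq x \vee j$ together with semidistributivity to conclude $y = x \vee j_*$, though pinning this down requires knowing $x \leq \kappa(j)$ which is what we're trying to prove, so in fact I should only run the covering argument inside direction (i) after I've bootstrapped enough, or simply cite Lemma~\ref{lem:itv-label} with the pair $(x, j)$ once $x \leq \kappa(j)$ is known and handle direction (i) by contraposition from (ii). Cleanest overall: prove (ii) directly as above; prove (i) as the contrapositive ``$x \not\leq \kappa(j) \implies x \vee j_* = x \vee j$'' by noting $x \vee \kappa(j) > \kappa(j) \geq \kappa(j)^{**}$... — I expect the contrapositive of (ii) to be the efficient path and I'll reconcile the directions there, so the genuine work is just the careful bookkeeping with $\kappa(j) \wedge j = j_*$ and semidistributivity, with no deep idea required.
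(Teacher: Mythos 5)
Your forward direction is correct and is essentially the paper's argument: from $x \leq \kappa(j)$ and $j_* \leq \kappa(j)$ you get $x \vee j_* \leq \kappa(j)$, hence $(x \vee j_*) \wedge j = j_*$, so $j \not\leq x \vee j_*$ while $j \leq x \vee j$.

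The converse direction as proposed has a genuine gap. Your route needs $x \vee j$ to cover $x \vee j_*$ so that Lemma~\ref{lem:label-char} applies, and you assert this is ``a standard consequence in semidistributive lattices of $j_* \covd j$''. It is not: in the five-element lattice $\tors kQ$ for the quiver $1 \to 2$ appearing in the paper (chains $0 \covd z \covd y \covd x$ and $0 \covd w \covd x$, with $w \in \jirr L$ and $w_* = 0$), one has $z \vee w_* = z$ and $z \vee w = x$, and $y$ lies strictly between them, so there is no cover. Lemma~\ref{lem:itv-label} only produces a Hasse arrow $x \vee j \to (x \vee j) \wedge \kappa(j)$ under the hypothesis $x \leq \kappa(j)$ --- circular here, as you note --- and even then its target need not be $x \vee j_*$ (it is $y$, not $z$, in the example). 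Your fallback of ``handling direction (i) by contraposition from (ii)'' is also a logical slip: the contrapositive of (ii) is the converse of (i), not (i) itself. The frustrating part is that you already wrote down everything needed. From $x \vee j_* \neq x \vee j$ you correctly deduce $j \not\leq x \vee j_*$, hence $j_* \leq (x \vee j_*) \wedge j < j$, hence $(x \vee j_*) \wedge j = j_*$ because $j_*$ is the maximum element strictly below the completely join-irreducible $j$. At that point you are done: by Definition~\ref{def:kappa}, $\kappa(j)$ is the \emph{maximum} of $\{\, w \in L \mid w \wedge j = j_* \,\}$, so $x \leq x \vee j_* \leq \kappa(j)$ follows immediately, with no covering relation and no labeling. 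This one-line finish is exactly the paper's proof.
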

\begin{proof}
  Suppose that $x \leq \kappa(j)$ holds. Then if $x \vee j_* = x \vee j$, then $j \leq x \vee j = x \vee j_* \leq \kappa(j) \vee j_* = \kappa(j)$, which is a contradiction, hence $x \vee j_* \neq x \vee j$ holds.

  Conversely, suppose that $x \vee j_* \neq x \vee j$ holds. Then we have $j \not \leq x \vee j_*$. On the other hand, $j_* \leq (x \vee j_*) \wedge j \leq j$, and $(x \vee j_*) \wedge j \neq j$ holds. Hence $(x \vee j_*) \wedge j = j_*$ holds, which implies $x \leq x \vee j_* \leq \kappa(j)$ by the definition of $\kappa(j)$.
\end{proof}
We have the following necessary condition for a set of completely join-irreducible elements to form a canonical join representation.
\begin{lemma}[{c.f. \cite[Theorem 5.13]{RST}}]\label{lem:join-ortho}
  Let $L$ be a completely semidistributive lattice and $x = \bigvee A$ a canonical join representation. Then for every $i, j \in A$ with $i \neq j$, we have $i \leq \kappa(j)$.
\end{lemma}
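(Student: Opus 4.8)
The plan is to use Lemma \ref{lem:kappa}, which reduces the claim $i \le \kappa(j)$ to the purely join-theoretic statement $i \vee j_* \ne i \vee j$. So fix $i, j \in A$ with $i \ne j$, and suppose for contradiction that $i \vee j_* = i \vee j$. The idea is that the canonical join representation $x = \bigvee A$ is, by condition (a) of Definition \ref{def:cjr}, a refinement of every join representation of $x$; I will construct from the assumed equality a join representation of $x$ not refined by $A$, contradicting (a) (equivalently, I will use the irredundancy form (b)$'$ of the remark). Concretely, replace the summand $j$ in $A$ by $j_*$: set $A' := (A \setminus \{j\}) \cup \{j_*\}$. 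Then, using $i \vee j_* = i \vee j$ and $i \in A \setminus \{j\}$, I compute
\[
  \bigvee A' = j_* \vee i \vee \bigvee (A \setminus \{i,j\}) = j \vee i \vee \bigvee (A \setminus \{i,j\}) = \bigvee A = x,
\]
so $x = \bigvee A'$ is again a join representation of $x$.

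Now I derive the contradiction from canonicity. Since $x = \bigvee A$ is a canonical join representation, $A$ must refine $A'$: for each $a \in A$ there is $a' \in A'$ with $a \le a'$. Applying this to $a = j$, there is $a' \in A'$ with $j \le a'$. The element $a'$ is either $j_*$ or some element of $A \setminus \{j\}$. The case $a' = j_*$ is impossible, since $j \le j_* < j$ is absurd. So $j \le a$ for some $a \in A$ with $a \ne j$; but $A$ is an antichain, so this forces $a = j$, a contradiction. Hence the assumption $i \vee j_* = i \vee j$ was false, i.e. $i \vee j_* \ne i \vee j$, and Lemma \ref{lem:kappa} gives $i \le \kappa(j)$ as required.

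I expect the only subtle point — and the step to state carefully — to be checking that $A'$ is genuinely a valid object to compare against, namely that $\bigvee A' = x$ really holds; this uses both the hypothesis $i \vee j_* = i \vee j$ and the fact that $i$ is among the summands of $A$ other than $j$ (which is exactly where $i \ne j$ is used). Everything else is a routine application of the antichain property and the refinement condition (a). One should also note $j \in \jirr L$, hence $j_*$ is well-defined and $j_* < j$, which is needed for the final contradiction; this is automatic since each element of a canonical join representation is completely join-irreducible by Lemma \ref{lem:joinand-irred}. No appeal to semidistributivity is needed beyond what is already packaged into Lemma \ref{lem:kappa}.
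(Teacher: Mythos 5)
Your proposal is correct and follows essentially the same route as the paper's proof: both reduce $i \leq \kappa(j)$ to $i \vee j_* \neq i \vee j$ via Lemma \ref{lem:kappa}, form the alternative join representation $x = j_* \vee \bigvee (A \setminus \{j\})$ using $i \in A \setminus \{j\}$, and derive a contradiction from the refinement property together with the antichain condition. The points you flag as subtle (that $\bigvee A' = x$ genuinely holds, and that $j_*$ is defined because each element of $A$ is completely join-irreducible by Lemma \ref{lem:joinand-irred}) are exactly the ones the paper also records.
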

\begin{proof}
  Observe that $j \in \jirr L$ holds for $j \in A$ by Lemma \ref{lem:joinand-irred}, hence $\kappa(j)$ is defined. Suppose that there are $i, j \in A$ with $i \neq j$ satisfying $i \not\leq \kappa(j)$.
  Then Lemma \ref{lem:kappa} implies $i \vee j = i \vee j_*$, thus we have the following join representation:
  \[
    x = j_* \vee \bigvee (A \setminus \{ j \})
  \]
  Since $x = \bigvee A$ is the canonical join representation, it refines the above join representation. Therefore, $j \leq j_*$ or $j \leq a$ for some $a \in A \setminus \{j\}$. Since the former is impossible, $j \leq a$ for some $a \in A$ with $a \neq j$. This contradicts the fact that $A$ is an antichain.
\end{proof}

We have the following converse of the above lemma. This generalizes \cite[Theorem 5.13]{RST} where $L$ is assumed to be finite.
\begin{proposition}\label{prop:cjr-partial-inverse}
  Let $L$ be a completely semidistributive lattice and $x = \bigvee A$ a join representation with $A \subseteq \jirr L$. Suppose that $i \leq \kappa(j)$ holds for every $i,j \in A$ with $i\neq j$. If $x$ has a canonical join representation, then $\CJR(x) = A$ holds.
\end{proposition}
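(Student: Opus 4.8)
The plan is to write $A' := \CJR(x)$, which exists by hypothesis, and prove $A = A'$ by a two-way inclusion. The only facts about $A'$ that will be used are that it is an antichain and that it refines every join representation of $x$ — in particular the given one $x = \bigvee A$ — and the hypothesis $i \leq \kappa(j)$ for distinct $i,j \in A$ will enter through Lemma \ref{lem:kappa}.

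First I would record the key inequality: for every $a \in A$,
\[
  \Bigl(\bigvee(A \setminus \{a\})\Bigr) \vee a_* \;<\; x .
\]
Indeed, each $i \in A \setminus \{a\}$ is distinct from $a$, so $i \leq \kappa(a)$ by hypothesis (note $a \in \jirr L$, so $\kappa(a)$ is defined), hence $\bigvee(A \setminus \{a\}) \leq \kappa(a)$. Applying Lemma \ref{lem:kappa} with the element $\bigvee(A \setminus \{a\})$ in place of $x$ and with $j = a$ gives $\bigvee(A \setminus \{a\}) \vee a_* \neq \bigvee(A \setminus \{a\}) \vee a = x$, and since the left-hand side is plainly $\leq x$, the strict inequality follows.

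Next I would deduce $A \subseteq A'$. Fix $a \in A$ and set $y := \bigl(\bigvee(A \setminus \{a\})\bigr) \vee a_*$, so $y < x = \bigvee A'$ by the previous step; hence there is $a' \in A'$ with $a' \not\leq y$. Since $A'$ refines $A$, pick $a'' \in A$ with $a' \leq a''$; if $a'' \neq a$ then $a' \leq a'' \leq \bigvee(A \setminus \{a\}) \leq y$, a contradiction, so $a' \leq a$. If moreover $a' < a$, then $a' \leq a_* \leq y$, again a contradiction, so $a' = a$, whence $a \in A'$. For the reverse inclusion, take $a' \in A'$; refinement gives $a \in A$ with $a' \leq a$, and $a \in A'$ by what was just shown, so $a' = a \in A$ because $A'$ is an antichain. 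Thus $A = A' = \CJR(x)$.

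The one delicate point, and the only place the hypotheses are genuinely used, is upgrading plain irredundancy $\bigvee(A\setminus\{a\}) < x$ to the stronger $\bigvee(A\setminus\{a\}) \vee a_* < x$: without the $a_*$ term one could only conclude $a' \leq a$, not $a' = a$. Everything after that is a formal manipulation of refinements and antichains, and, as a bonus, the argument never assumes $A$ is an antichain — that falls out at the end.
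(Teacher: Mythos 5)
Your proof is correct and follows essentially the same route as the paper's: both hinge on combining the hypothesis with Lemma \ref{lem:kappa} to get $a_* \vee \bigvee(A\setminus\{a\}) \neq x$, and then exploit the refinement and antichain properties of the canonical join representation. The only difference is organizational --- you argue directly, locating for each $a \in A$ an element of $\CJR(x)$ that is forced to equal $a$, whereas the paper argues by contradiction starting from $a \in A \setminus \CJR(x)$.
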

\begin{proof}
  Let $x = \bigvee B$ be a canonical join representation, and we will show $A = B$. Suppose that $A \not \subseteq B$ holds, and take $a \in A$ with $a \not \in B$. Then since $B$ refines $A$, every $b \in B$ satisfies that either $b \leq a$ or $b \leq \bigvee (A \setminus \{a\})$, and the former is equivalent to $b \leq a_*$ by $a \not\in B$. Therefore, we have the following inequality:
  \[
    x = \bigvee B \leq a_* \vee \bigvee (A \setminus \{a\})
    \leq a \vee \bigvee(A \setminus \{a\}) = \bigvee A = x,
  \]
  hence we obtain $a_* \vee \bigvee(A \setminus \{a\}) = a \vee \bigvee (A \setminus \{a\})$. Then Lemma \ref{lem:kappa} implies $\bigvee (A \setminus \{a\}) \not\leq \kappa(a)$. Therefore, there is some $a' \in A \setminus \{a\}$ satisfying $a' \not\leq \kappa(a)$, which is a contradiction. Hence $A \subseteq B$ holds. Let $b \in B$. Then $b \leq a$ for some $a \in A \subseteq B$, and since $B$ is an antichain, we must have $b = a$. Thus $A = B$ holds.
\end{proof}

The following interpretation of the canonical join representation in terms of the join-irreducible labeling is useful.
For an element $x$ in a completely semidistributive lattice, we denote by $\jlabel_\downarrow x$ (resp. $\jlabel^\uparrow x$) the set of join-irreducible labels of Hasse arrows starting at $x$ (resp. ending at $x$).
This result is shown in \cite[Lemma 19]{cjc} when $L$ is finite.
\begin{lemma}\label{lem:cjr-label}
  Let $L$ be a completely semidistributive lattice and $x \in L$, and suppose that $x$ has a canonical join representation. Then $\CJR(x) = \jlabel_\downarrow x$ holds.
\end{lemma}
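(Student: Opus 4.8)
The plan is to prove the two inclusions $\CJR(x)\subseteq\jlabel_\downarrow x$ and $\jlabel_\downarrow x\subseteq\CJR(x)$ separately; throughout I write $A:=\CJR(x)$, so that $x=\bigvee A$ is the canonical join representation.

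For $\CJR(x)\subseteq\jlabel_\downarrow x$, I would fix $a\in A$. By Lemma~\ref{lem:joinand-irred} we have $a\in\jirr L$, so $\kappa(a)$ is defined. Set $w:=\bigvee(A\setminus\{a\})$, so that $w\vee a=\bigvee A=x$. By Lemma~\ref{lem:join-ortho}, $i\leq\kappa(a)$ for every $i\in A\setminus\{a\}$, and hence $w\leq\kappa(a)$. Now Lemma~\ref{lem:itv-label}, applied to the element $w$ and the completely join-irreducible $a$, yields a Hasse arrow $x=w\vee a\to(w\vee a)\wedge\kappa(a)$ whose join-irreducible label is $a$. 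Therefore $a\in\jlabel_\downarrow x$.

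For the reverse inclusion $\jlabel_\downarrow x\subseteq\CJR(x)$, I would take a Hasse arrow $x\to y$ and set $j:=\gamma(x\to y)$. By Lemma~\ref{lem:label-char}(1) we have $y\vee j=x$ and $y\wedge j=j_*$; in particular $j_*\leq y$. Since $x=\bigvee\{y,j\}$ is a join representation of $x$ and $x=\bigvee A$ is the canonical one, $A$ refines $\{y,j\}$: every $a\in A$ satisfies $a\leq y$ or $a\leq j$. They cannot all satisfy $a\leq y$, since that would force $x=\bigvee A\leq y<x$; hence some $a\in A$ has $a\leq j$ but $a\not\leq y$. As $j\in\jirr L$ has $j_*$ as its unique lower cover, $a<j$ would give $a\leq j_*\leq y$, a contradiction, so $a=j$ and therefore $j\in A=\CJR(x)$.

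I do not anticipate a real obstacle: once Lemmas~\ref{lem:joinand-irred}, \ref{lem:join-ortho}, \ref{lem:itv-label} and \ref{lem:label-char} are in hand, both directions are short. The point that requires a little care is the second inclusion, where the refinement property must be applied to the possibly non-join-irreducible pair $\{y,j\}$ and the alternative $a<j$ ruled out via $j_*\leq y$; the edge case $A=\{x\}$ (so $x\in\jirr L$ and $w$ is the bottom element) causes no trouble.
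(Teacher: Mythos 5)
Your proof is correct, but it takes a genuinely different route from the paper's. The paper invokes a theorem of Gorbunov (cited as \cite[Section 5]{Gor}) asserting that $j \mapsto x_j := \bigvee\{a \in L \mid j \not\leq a \leq x\}$ is a bijection from $\CJR(x)$ onto the set of lower covers of $x$, and then verifies that $\gamma(x \to x_j) = j$, so the equality of sets falls out of the bijection. You instead prove the two inclusions directly and self-containedly from the lemmas already established in the paper: for $\CJR(x) \subseteq \jlabel_\downarrow x$ you combine Lemma~\ref{lem:joinand-irred}, Lemma~\ref{lem:join-ortho} (to get $\bigvee(A\setminus\{a\}) \leq \kappa(a)$) and Lemma~\ref{lem:itv-label} (to manufacture a down-arrow at $x$ labeled $a$); for the reverse inclusion you apply the refinement property of the canonical join representation to the join representation $x = y \vee j$ coming from Lemma~\ref{lem:label-char}, and rule out $a < j$ via $a \leq j_* \leq y$. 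Both directions are sound, including the edge cases $A = \varnothing$ and $A = \{x\}$. What your approach buys is independence from the external Gorbunov citation; what the paper's approach buys is the slightly stronger packaged statement that the canonical joinands are in bijection with the lower covers of $x$ (i.e.\ distinct joinands label distinct down-arrows), which is not needed for the set equality asserted in the lemma but is implicitly reused elsewhere.
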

\begin{proof}
  Suppose that $x$ has a canonical join representation $x = \bigvee J$.
  It is shown in \cite[Section 5]{Gor} that there is a bijection
  \[
    \begin{tikzcd}[row sep= 0]
      J \rar["\sim"] & \{ y \in L \mid x \cov y\}, \\
      j \rar[mapsto] & x_j := \bigvee \{ a \in L \mid j \not\leq a \leq x \}.
    \end{tikzcd}
  \]
  Therefore, it is enough to show that $y \mapsto \gamma(x \to y)$ is an inverse of this bijection, that is, $\gamma(x \to x_j) = j$.
  Since $J$ is an antichain, $J \setminus \{j\}$ is a subset of $\{ a \in L \mid j \not\leq a \leq x \}$. Therefore, we have $x = x_j \vee j$.
  Suppose that $x_j \vee z = x$, and it suffices to show $j \leq z$ in order to prove $\gamma(x \to x_j) = j$. Indeed, if $j \not \leq z$, then $z \leq x_j$ by the definition of $x_j$. Therefore, $x_j \vee z = x_j \neq x$, which is a contradiction.
\end{proof}

This gives the following description of the extended kappa map in terms of the join-irreducible labeling at least when $L$ is finite.
\begin{corollary}\label{cor:ext-kappa-char}
  Let $L$ be a finite semidistributive lattice. Then for each $x \in L$, there is a unique element $y$ satisfying $\jlabel_\downarrow x = \jlabel^\uparrow y$, and in this case, $y = \exk(x)$ holds. Moreover, $\exk \colon L \to L$ is bijective.
\end{corollary}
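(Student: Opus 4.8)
The plan is to deduce everything from Lemma \ref{lem:cjr-label}, the duality between $\CJR$ and $\CMR$, and the fact that $L$ being finite forces $L_0 = L = L^0$ (Proposition \ref{prop:cjr-char} and its dual). First I would record the dual of Lemma \ref{lem:cjr-label}: for every $y \in L$ one has $\CMR(y) = \{ \mu(x \to y) \mid x \cov y \}$, and applying the bijection $\kappa$ (Theorem \ref{thm:kappa-bij}(2)) this rewrites as $\kappa^{-1}(\CMR(y)) = \jlabel^\uparrow y$, i.e. the set of join-irreducible labels of Hasse arrows ending at $y$ is $\{ \kappa^{d}(m) \mid m \in \CMR(y) \}$. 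Symmetrically, Lemma \ref{lem:cjr-label} gives $\jlabel_\downarrow x = \CJR(x)$.

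Next I would establish existence and uniqueness of $y$ with $\jlabel_\downarrow x = \jlabel^\uparrow y$. For \emph{existence}, set $y := \exk(x) = \bigwedge\{\kappa(j) \mid j \in \CJR(x)\}$; the claim is that $\jlabel^\uparrow y = \CJR(x)$, equivalently (via the dual of Lemma \ref{lem:cjr-label}) that $\kappa(\CJR(x)) = \CMR(y)$, i.e. that $y = \bigwedge \kappa(\CJR(x))$ \emph{is} the canonical meet representation of $y$. By the dual of Proposition \ref{prop:cjr-partial-inverse}, it suffices to check that $\kappa(\CJR(x))$ is an antichain of meet-irreducibles whose meet is $y$ and which satisfies the dual orthogonality condition: for $i \neq j$ in $\CJR(x)$, $\kappa(i) \geq \kappa^{d}(\kappa(j)) = j$. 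But Lemma \ref{lem:join-ortho} says exactly $i \leq \kappa(j)$ for all $i \neq j$ in $\CJR(x)$, and since $\kappa(i)$ is the largest element meeting $i$ to $i_*$, from $i \leq \kappa(j)$ one gets... — more directly, the needed inequality $j \leq \kappa(i)$ for $i \neq j$ is again just Lemma \ref{lem:join-ortho} with the roles swapped, so the dual orthogonality holds and $\kappa(\CJR(x)) = \CMR(y)$. For \emph{uniqueness}, if $\jlabel^\uparrow y = \jlabel^\uparrow y'$ then $\CMR(y) = \kappa(\jlabel^\uparrow y) = \kappa(\jlabel^\uparrow y') = \CMR(y')$, whence $y = \bigwedge\CMR(y) = \bigwedge\CMR(y') = y'$. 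This simultaneously identifies $y$ with $\exk(x)$.

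Finally, for bijectivity of $\exk \colon L \to L$: the map $x \mapsto \jlabel_\downarrow x = \CJR(x)$ is injective on $L$ because $x = \bigvee \CJR(x)$ recovers $x$ from $\CJR(x)$; dually $y \mapsto \jlabel^\uparrow y$ is injective because $y = \bigwedge \CMR(y)$ and $\CMR(y)$ is determined by $\jlabel^\uparrow y$. Since $\exk$ is, by the previous paragraph, the composite $x \mapsto \jlabel_\downarrow x = \jlabel^\uparrow y \mapsto y$ of the first injection followed by the inverse of the second, and since both maps run between the same finite set $L$ with the same image — namely $\{ \jlabel_\downarrow x : x \in L\} = \{\CJR(x) : x\in L\}$ on one side and $\{\jlabel^\uparrow y : y \in L\}$ on the other, which coincide by the existence statement — $\exk$ is a bijection. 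I expect the main obstacle to be the bookkeeping in the existence step, namely verifying cleanly that $\kappa(\CJR(x))$ satisfies the hypotheses of the dual of Proposition \ref{prop:cjr-partial-inverse} (antichain property and that its meet is genuinely $\exk(x)$ with no collapse), since this is where the orthogonality Lemma \ref{lem:join-ortho} and the finiteness of $L$ both get used; the uniqueness and bijectivity parts are then formal.
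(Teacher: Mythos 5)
Your proposal is correct and follows essentially the same route as the paper: finiteness gives $L = L_0 = L^0$, Lemma \ref{lem:cjr-label} and its dual translate $\CJR$/$\CMR$ into arrow labels, Lemma \ref{lem:join-ortho} supplies the orthogonality hypothesis for the dual of Proposition \ref{prop:cjr-partial-inverse}, which identifies $\CMR(\exk(x))$ with $\kappa(\CJR(x))$, and uniqueness and bijectivity then follow formally. The only cosmetic difference is that you phrase the last step via injectivity of $x \mapsto \jlabel_\downarrow x$ and $y \mapsto \jlabel^\uparrow y$ plus a comparison of images (which in the finite case follows from cardinality), whereas the paper leaves that part implicit.
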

\begin{proof}
  Since $L$ is finite, every element has a canonical join representation and a canonical meet representation by Proposition \ref{prop:cjr-char}, hence $L = L_0 = L^0$.
  Let $x \in L$. Then Lemma \ref{lem:cjr-label} shows $\CJR(x) = \jlabel_\downarrow x$, and Lemma \ref{lem:join-ortho} implies that $i \leq \kappa(j)$ holds for $i,j \in \CJR(x)$ with $i \neq j$.

  Put $y:= \exk(x) = \bigwedge \{\kappa(j) \mid j \in \CJR(x)\}$. Since $y$ has a canonical meet representation, the dual of Lemma \ref{prop:cjr-partial-inverse} implies $\CMR(y) = \{\kappa(j) \mid j \in \CJR(x)\}$. Hence the dual of Lemma \ref{lem:cjr-label} together with Theorem \ref{thm:kappa-bij}(2) implies $\jlabel^\uparrow y = \CJR(x) = \jlabel_\downarrow x$.

  Conversely, suppose that $y \in L$ satisfies $\jlabel^\uparrow y = \jlabel_\downarrow x$. Since $y$ has a canonical meet representation, $\CMR(y) = \{\kappa(j) \mid j \in \jlabel_\downarrow x\}$ holds by the dual of Lemma \ref{lem:cjr-label} and Theorem \ref{thm:kappa-bij}. In particular, $y = \bigwedge \CMR(y) = \exk(x)$ holds.
  The proof of the last statement is clear from the above argument, hence we omit it.
\end{proof}

\begin{example}
  Let $L$ be a lattice in Example \ref{ex:intro-alg}, whose join-irreducible labels are shown in Figure \ref{fig:label-ex}. Consider $\exk(\ov{5})$. Then we have $\jlabel_\downarrow \ov{5} = \{ 2, 3\}$. Therefore, $\exk(\ov{5})$ is the unique element $y$ satisfying $\jlabel^\uparrow y = \{2, 3\}$ by Corollary \ref{cor:ext-kappa-char}, and we can find that $1$ is such an element. Thus $\exk(\ov{5}) = 1$ holds.
\end{example}

We will see later in Theorem \ref{thm:ext-kappa-bij} that $\exk$ gives a bijection between $L_0$ and $L^0$ when $L = \tors \AA$ for an abelian length category $\AA$. Thus we have the following natural question.
\begin{question}\label{q:1}
  Let $L$ be a completely semidistributive lattice. Then does $\exk(x)$ have a canonical meet representation for $x \in L_0$? If so, then $\exk$ gives a bijection $L_0 \isoto L^0$ by the same argument as in Corollary \ref{cor:ext-kappa-char}
\end{question}

\subsection{Canonical join representation and widely generated torsion classes}
The aim of this section is to describe the relation between widely generated torsion classes and canonical join representations, and to explain a representation-theoretic interpretation of the extended kappa map given in \cite{BTZ}.
We begin with introducing the related notions.
\begin{definition}\label{def:filt}
  Let $\AA$ be an abelian length category.
  \begin{enumerate}
    \item A torsion class $\TT$ in $\AA$ is \emph{widely generated} \cite{AP} if there exists some wide subcategory $\WW$ of $\AA$ satisfying $\TT = \TTT(\WW)$.
    \item For a class $\CC$ of objects in $\AA$, we denote by $\Filt\CC$ the subcategory of $\AA$ consisting of $M \in \AA$ such that there is a filtration $0 = M_0 \subseteq M_1 \subseteq \cdots \subseteq M_n = M$ of subobjects of $M$ satisfying $M_i/M_{i-1} \in \CC$ for each $i$.
    \item For a wide subcategory $\WW$ of $\AA$, we denote by $\simp\WW$ the set of isomorphism classes of simple objects in an abelian category $\WW$.
    \item For a torsion class $\TT$ in $\AA$, we define the subcategory $\WL(\TT)$ of $\AA$ as follows:
    \[
      \WL(\TT) = \{ W \in \TT \mid \text{$\ker f \in \TT$ for every $f \colon T \to W$ with $T \in \TT$} \}.
    \]
  \end{enumerate}
\end{definition}

Since completely join-irreducible elements of $\tors\AA$ can be described by bricks, one can consider the following map $\ov{\CJR}$ instead of $\CJR$:
\begin{definition}
  Let $\AA$ be an abelian length category, and suppose that $\TT \in \tors\AA$ has a canonical join representation in $\tors\AA$. Then we define a set $\ov{\CJR}(\TT)$ of bricks as follows: Consider $\CJR(\TT)$, which is a set of completely join-irreducible elements by Lemma \ref{lem:joinand-irred}. Under the bijection between completely join-irreducible elements and bricks in Theorem \ref{thm:btz}, we obtain a set of bricks $\ov{\CJR}(\TT)$ corresponding to $\CJR(\TT)$.
\end{definition}
Next, we recall some results related to wide subcategories and torsion classes.
\begin{proposition}\label{prop:wide-related}
  Let $\AA$ be an abelian length category.
  \begin{enumerate}
    \item \cite[1.2]{ringel} $\Filt \colon \sbrick\AA \to \wide\AA$ and $\simp \colon \wide\AA \to \sbrick\AA$ are mutually inverse bijections.
    \item \cite[Proposition 3.3]{MS} $\WL(\TT) \in \wide\AA$ holds for $\TT \in \tors\AA$, thus we have a map $\WL \colon \tors\AA \allowbreak\to \wide\AA$. Moreover, the composition $\WL \circ \TTT \colon \wide\AA \to \tors\AA \to \wide\AA$ is the identity.
    \item \cite[Theorem 7.2]{AP} A torsion class $\TT$ is widely generated if and only if $\TT$ satisfies the following condition: for every $\UU \in \tors\AA$ with $\UU \subsetneq \TT$, there exists $\TT' \in \tors\AA$ satisfying $\UU \subseteq \TT' \covd \TT$.
  \end{enumerate}
\end{proposition}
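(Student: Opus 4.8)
The plan is to adapt the three cited proofs---Ringel's for (1), Marks--\v{S}\v{t}ov\'i\v{c}ek's for (2), and Asai--Pfeifer's for (3)---to an arbitrary abelian length category $\AA$; none of the arguments uses finite-dimensionality, only that every object of $\AA$ has a finite composition series.

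For (1), the point is that a wide subcategory $\WW$ is itself an abelian length category: being closed under kernels and cokernels it is an exact abelian subcategory of $\AA$, and every object of $\WW$ has finite length already in $\AA$. Hence $\simp\WW$ is defined, and Schur's lemma shows its members are bricks which are pairwise Hom-orthogonal, so $\simp\WW \in \sbrick\AA$; moreover $\WW = \Filt(\simp\WW)$ by the Jordan--H\"older theorem inside $\WW$. Conversely, for a semibrick $\SS$ I would show $\Filt\SS \in \wide\AA$ by induction on filtration length: for a morphism $f$ between objects of $\Filt\SS$ one checks $\ker f$, $\coker f$, $\im f \in \Filt\SS$ by peeling off one $\SS$-layer at a time, using that the members of $\SS$ are pairwise Hom-orthogonal bricks, so that on each layer $f$ is zero or an isomorphism. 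Then one verifies $\simp(\Filt\SS) = \SS$, whence $\Filt$ and $\simp$ are mutually inverse.

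For (2) I would first verify $\WL(\TT) \in \wide\AA$ by three diagram chases. Closure under kernels is immediate from the definition (take $T$ to be the source). For an extension $0 \to W_1 \to W \to W_2 \to 0$ with $W_1,W_2 \in \WL(\TT)$ and $f\colon T \to W$ with $T \in \TT$: the composite $T \to W_2$ has kernel in $\TT$, and that kernel maps to $W_1$ with kernel $\ker f$, which then lies in $\TT$ since $W_1 \in \WL(\TT)$. The delicate case is cokernels: given $f\colon W_1 \to W_2$ and $g\colon T \to \coker f$ with $T \in \TT$, pull $g$ back along $W_2 \twoheadrightarrow \coker f$ to get $T'$ fitting in $0 \to \im f \to T' \to T \to 0$; since $\im f$ is a quotient of $W_1 \in \TT$ we get $T' \in \TT$, and the induced map $T' \to W_2$ has kernel isomorphic to $\ker g$, so $\ker g \in \TT$. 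For $\WL \circ \TTT = \id$ I would use the Marks--\v{S}\v{t}ov\'i\v{c}ek description $\TTT(\WW) = \Filt(\Fac\WW)$: the inclusion $\WW \subseteq \WL(\TTT(\WW))$ holds because any morphism from an object of $\Filt(\Fac\WW)$ into an object of $\WW$ has image in $\WW$ (images of morphisms between objects of $\WW$ lie in $\WW$, then induct on the filtration length), so its kernel lies in $\TTT(\WW)$; the reverse inclusion is obtained by taking $X \in \WL(\TTT(\WW))$ and using its defining kernel property to promote the $\Fac\WW$-layers of a filtration of $X$ to honest $\WW$-subobjects.

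Finally, for (3) I would follow Asai--Pfeifer: if $\TT = \TTT(\WW)$ with $\WW = \WL(\TT)$, then for $\UU \subsetneq \TT$ there is a simple object of $\WW$ not annihilated by $\UU$, and adjoining it produces a torsion class $\TT'$ with $\UU \subseteq \TT' \covd \TT$; conversely, the covering condition forces $\TT = \TTT(\WL(\TT))$, i.e.\ $\TT$ is widely generated, by the same layer-by-layer analysis. I would also note that this covering condition is exactly Gorbunov's criterion in Proposition \ref{prop:cjr-char}, so (3) identifies the widely generated torsion classes with the elements of $L_0$ for $L = \tors\AA$, a remark convenient later. The main obstacle I expect is the two-sided verification of $\WL \circ \TTT = \id$ in (2): besides the pullback handling cokernels, one must carefully track how the $\Fac\WW$-layers of a filtration interact with the defining property of $\WL$ in order to conclude that an object satisfying that property lies in $\WW$---this bookkeeping is the technical core of the Marks--\v{S}\v{t}ov\'i\v{c}ek argument.
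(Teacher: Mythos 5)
The paper gives no proof of this proposition: it is stated purely as a recollection of the cited results of Ringel, Marks--\v{S}t\!'ov\'{i}\v{c}ek, and Asai--Pfeifer, so there is no in-paper argument to compare yours against. Your reconstruction is essentially the standard one and the concrete parts check out. In particular, all three diagram chases for the wideness of $\WL(\TT)$ in (2) are correct: for kernels one precomposes with the mono $\ker f \hookrightarrow W_1$; for extensions the kernel of $f\colon T \to W$ equals the kernel of the induced map $f^{-1}(W_1) \to W_1$; and for cokernels your pullback $T' = W_2 \times_{\coker f} T$ sits in $0 \to \im f \to T' \to T \to 0$ with $\ker(T' \to W_2) \cong \ker g$, as claimed. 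Your closing remark that (3) is exactly Gorbunov's criterion is also right, and is precisely how the paper uses (3) in the first proof of Theorem \ref{thm:cjr-sbrick}(1).

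Two places where the sketch is thinner than the actual work. First, in $\WW \subseteq \WL(\TTT(\WW))$ the passage from ``$\im f \in \WW$'' to ``$\ker f \in \TTT(\WW)$'' is not immediate, since $\Filt(\Fac\WW)$ is not closed under subobjects; the induction does go through, but only via the exact sequence $0 \to \ker f \cap M \to \ker f \to \ker \bar{f} \to 0$ for the top filtration step $M \subseteq T$, using the identity $f^{-1}(\im(f|_M)) = \ker f + M$ to see that the right-hand term really is all of $\ker\bar f \in \Fac\WW$. Second, the reverse inclusion $\WL(\TTT(\WW)) \subseteq \WW$ needs, already in the base case $X \in \Fac\WW$, the fact that $\TTT(\WW) \cap \Sub\WW \subseteq \WW$ (one writes $X = W/\ker\pi$ with $\ker\pi \in \TTT(\WW) \cap \Sub\WW$); this is exactly Step 2 of the paper's Lemma \ref{lem:ie-closed}, so it is worth invoking that lemma explicitly rather than leaving it as ``bookkeeping.'' Finally, the cover construction in (3) is only gestured at: the torsion class below $\TT$ containing $\UU$ is obtained as $\TT \cap {}^\perp S$ for a suitable $S \in \simp\WL(\TT)$ (one shows that if no such $S$ works then $\simp\WL(\TT) \subseteq \UU$, forcing $\TT = \TTT(\simp\WL(\TT)) \subseteq \UU$), and this relies on the brick-labelling results of \cite{AP}; your phrase ``adjoining a simple not annihilated by $\UU$'' has the construction somewhat backwards, though the intended argument is recoverable. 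None of these is a fatal gap, but they are the genuine content of the cited proofs.
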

Since $\TTT(\SS) = \TTT(\Filt\SS)$ holds for a semibrick $\SS$, the above first result implies that a torsion class $\TT$ is widely generated if and only if there is a semibrick $\SS$ satisfying $\TT = \TTT(\SS)$. We also note that $\TTT(\SS) = \bigvee \{ \TTT(B) \mid B \in \SS \}$ holds.

Now we are ready to prove the following relation between semibricks, wide subcategories, and torsion classes with canonical join representations. We note that this result is implicitly given in \cite[Section 3.2, Corollary 5.1.8]{BCZ}, but since their definition of canonical join representations is different from ours (see Remark \ref{rem:CJR-def}), they did not state it in this form (c.f. \cite[Remark 4.4.10]{BTZ}).
We shall give two proofs: a new lattice-theoretic proof, and a representation-theoretic proof which is essentially in \cite{BTZ}. This will help us understand the relation between lattice theory and representation theory of algebras.
\begin{theorem}\label{thm:cjr-sbrick}
  Let $\AA$ be an abelian length category.
  \begin{enumerate}
    \item $\TT \in \tors\AA$ is widely generated if and only if $\CJR(\TT)$ in $\tors\AA$ exists.
    \item $\TTT \colon \wide\AA \to (\tors\AA)_0$ and $\WL \colon (\tors\AA)_0 \to \wide\AA$ are mutually inverse bijections.
    \item $\ov{\CJR}(\TT)$ is a semibrick for $\TT \in (\tors\AA)_0$, and $\TTT \colon \sbrick\AA \to (\tors\AA)_0$ and $\ov{\CJR} \colon (\tors\AA)_0 \to \sbrick\AA$ are mutually inverse bijections.
    \item We have the following commutative diagram consisting of bijections.
    \[
      \begin{tikzcd}[column sep = large]
        \sbrick \AA \dar[shift left, "\Filt"] \rar["\TTT", shift left]
        & (\tors\AA)_0 \dar[equal] \lar["\ov{\CJR}", shift left]\\
        \wide\AA \rar["\TTT", shift left] \uar["\simp", shift left]
        & (\tors\AA)_0 \lar["\WL", shift left]
      \end{tikzcd}
    \]
  \end{enumerate}
\end{theorem}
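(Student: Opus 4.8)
The plan is to prove the four statements in the order (1), (3), (2), (4), leveraging the dictionary between bricks and completely join-irreducible torsion classes from Theorem \ref{thm:btz} together with the lattice-theoretic facts on canonical join representations established earlier. For (1), the point is simply that the two conditions in play are verbatim the same. By Proposition \ref{prop:wide-related}(3), $\TT\in\tors\AA$ is widely generated if and only if for every $\UU\subsetneq\TT$ there is $\TT'\in\tors\AA$ with $\UU\subseteq\TT'\covd\TT$; by Gorbunov's criterion (Proposition \ref{prop:cjr-char}), $\TT$ has a canonical join representation if and only if for every $\UU<\TT$ there is $x$ with $\UU\leq x\covd\TT$. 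Since $\tors\AA$ is completely semidistributive, these coincide, which is (1).

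For (3) the key translation is the Barnard--Todorov--Zhu formula $\kappa(\TTT(B))={}^\perp B$ of Theorem \ref{thm:btz}(3). Given $\TT\in(\tors\AA)_0$, write $\CJR(\TT)=A$ and let $\SS=\ov{\CJR}(\TT)$ be the corresponding set of bricks (the correspondence $A\to\SS$ is a bijection by Theorem \ref{thm:btz}(1)). For $B_1\neq B_2$ in $\SS$ with $\TTT(B_1),\TTT(B_2)\in A$, Lemma \ref{lem:join-ortho} gives $\TTT(B_1)\leq\kappa(\TTT(B_2))={}^\perp B_2$; since $B_1\in\TTT(B_1)$ this forces $\AA(B_1,B_2)=0$, and symmetrically $\AA(B_2,B_1)=0$, so $\SS$ is a semibrick. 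Conversely, for a semibrick $\SS$ the set $A:=\{\TTT(B)\mid B\in\SS\}$ consists of completely join-irreducibles satisfying $\TTT(B_1)\leq{}^\perp B_2=\kappa(\TTT(B_2))$ for $B_1\neq B_2$, and $\TTT(\SS)=\bigvee A$ is widely generated, being $\TTT(\Filt\SS)$ with $\Filt\SS\in\wide\AA$ by Proposition \ref{prop:wide-related}(1); hence $\TTT(\SS)\in(\tors\AA)_0$ by (1), so Proposition \ref{prop:cjr-partial-inverse} applies and gives $\CJR(\TTT(\SS))=A$, i.e.\ $\ov{\CJR}(\TTT(\SS))=\SS$. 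The reverse composite is immediate: $\TTT(\ov{\CJR}(\TT))=\bigvee\{\TTT(B)\mid B\in\ov{\CJR}(\TT)\}=\bigvee\CJR(\TT)=\TT$. This proves (3), and in particular that $\TTT\colon\sbrick\AA\to(\tors\AA)_0$ and $\ov{\CJR}$ are mutually inverse.

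For (2), I would factor $\TTT\colon\wide\AA\to(\tors\AA)_0$ as $\wide\AA\xrightarrow{\simp}\sbrick\AA\xrightarrow{\TTT}(\tors\AA)_0$; this is legitimate because $\TTT(\simp\WW)=\TTT(\Filt(\simp\WW))=\TTT(\WW)$ by Proposition \ref{prop:wide-related}(1), and both factors are bijections (Proposition \ref{prop:wide-related}(1) and part (3)), so $\TTT$ is a bijection onto $(\tors\AA)_0$. Since $\WL\circ\TTT=\id_{\wide\AA}$ by Proposition \ref{prop:wide-related}(2), the restriction of $\WL$ to $(\tors\AA)_0$ is necessarily the two-sided inverse of this bijection, proving (2). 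Part (4) is then pure assembly: all four maps in the square are bijections by (2), (3), and Proposition \ref{prop:wide-related}(1); the right-hand square commutes trivially; and the left-hand square commutes because $\TTT(\Filt\SS)=\TTT(\SS)$ for a semibrick $\SS$ and $\TTT(\simp\WW)=\TTT(\WW)$ for a wide subcategory $\WW$, as already used.

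I expect the main obstacle to be the bookkeeping in (3), specifically the need to know in advance that $\TTT(\SS)$ admits a canonical join representation before Proposition \ref{prop:cjr-partial-inverse} can be invoked: this is exactly where the passage back through ``widely generated'' and hence through (1) is indispensable, and where the discrepancy between our notion of canonical join representation and that of \cite{BCZ,BTZ} (Remark \ref{rem:CJR-def}) must be kept in mind, since the cited results of Gorbunov and Proposition \ref{prop:cjr-partial-inverse} are formulated for our notion. Once that is in place, everything else is a routine translation through Theorem \ref{thm:btz}.
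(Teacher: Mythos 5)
Your proof is correct, and while it shares the skeleton of the paper's argument for (1) (Gorbunov's criterion matched against Proposition \ref{prop:wide-related}(3)), for (2), and for the semibrick claim in (3) (Lemma \ref{lem:join-ortho} plus $\kappa(\TTT(B))={}^\perp B$ from Theorem \ref{thm:btz}), you complete (3) and (4) by a genuinely different route. The paper's first proof establishes only the semibrick claim in (3) and defers the bijectivity to the identity $\ov{\CJR}=\simp\circ\WL$, which it proves in (4) via the brick labeling, citing external results of Asai--Pfeifer and Demonet--Iyama--Reading--Reiten--Thomas; its second proof instead cites Barnard--Carroll--Zhu for both directions. You avoid both external detours: for a semibrick $\SS$ you observe that $\TTT(\SS)=\bigvee\{\TTT(B)\mid B\in\SS\}$ is widely generated, hence lies in $(\tors\AA)_0$ by (1), and then Proposition \ref{prop:cjr-partial-inverse} pins down $\CJR(\TTT(\SS))=\{\TTT(B)\mid B\in\SS\}$, giving $\ov{\CJR}\circ\TTT=\id$ directly; the commutativity in (4), including $\ov{\CJR}=\simp\circ\WL$, then follows formally by inverting the already-established bijections. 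Your version is more self-contained relative to the lemmas actually proved in the paper, at the cost of being slightly less informative: the paper's detour through brick labels yields the additional fact that $\ov{\CJR}(\TT)$ consists of the brick labels of the Hasse arrows starting at $\TT$ (equivalently $\CJR(\TT)=\jlabel_\downarrow\TT$), which is used elsewhere; in your setup that description still follows from Lemma \ref{lem:cjr-label} but is not a byproduct of the proof itself. You are also right that the place requiring care is knowing $\TTT(\SS)\in(\tors\AA)_0$ before invoking Proposition \ref{prop:cjr-partial-inverse}; your passage back through ``widely generated'' handles exactly that.
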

\begin{proof}
  We provide two different proofs of (1) and (3). The first one is based on the lattice theoretic observations in \cite{AP,Gor}, and the second on the representation theoretic observations in \cite{BCZ}.

  \noindent
  \emph{First proof.}
  (1)
  This follows from Proposition \ref{prop:wide-related}(3) and Proposition \ref{prop:cjr-char}.

  (2)
  Since $\WL \circ \TTT \colon \wide\AA \to \tors\AA \to \wide\AA$ is the identity by Proposition \ref{prop:wide-related}(2), it follows that $\TTT$ and $\WL$ induces bijections between $\wide\AA$ and the image of $\TTT \colon \wide\AA \to \tors\AA$. Then (2) follows from (1).

  (3)
  Here we only prove that $\ov{\CJR}(\TT)$ is a semibrick for $\TT \in (\tors\AA)_0$. The remaining assertion will then follow from the commutativity $\ov{\CJR} = \simp \circ \WL$, which will be proved in (4).

  Let $B,C \in \ov{\CJR}(\TT)$ with $B$ and $C$ non-isomorphic. By Lemma \ref{lem:join-ortho}, we have that $\TTT(B) \leq \kappa(\TTT(C))$ holds in $\tors\AA$. Since $\kappa(\TTT(C)) = {}^\perp C$ by Theorem \ref{thm:btz}, we have $\TTT(B) \subseteq {}^\perp C$, hence $B \in {}^\perp C$. This shows that $\AA(B,C) = 0$, hence $\ov{\CJR}(\TT)$ is a semibrick.

  (4)
  We only have to show $\simp\WL(\TT) = \ov{\CJR}(\TT)$ holds for $\TT \in (\tors\AA)_0$, since the other commutativity is clear.
  By \cite[Theorem 6.7]{AP}, we have that $\simp\WL(\TT)$ is the set of \emph{brick labels} of Hasse arrows starting at $\TT$. Here, we omit the definition of the brick labeling, but this labeling is compatible with the join-irreducible labeling by \cite[Theorem 3.11]{DIRRT} under the bijection in Theorem \ref{thm:btz}. Thus $\simp\WL(\TT) = \{ B \in \brick \AA \mid \TTT(B) \in \jlabel_\downarrow \TT\}$ holds. Then Lemma \ref{lem:cjr-label} implies $\simp\WL(\TT) = \ov{\CJR}(\TT)$.

  \noindent
  \emph{Second proof.}
  (1)
  Suppose that $\TT$ is widely generated. This means that $\TT = \TTT(\SS)$ holds for some semibrick $\SS$. Thus $\TT = \bigvee \{ \TTT(B) \mid B \in \SS \}$ holds. The fact that this is a canonical join representation of $\TT$ is shown in \cite[Proposition 3.7]{BCZ} (one can easily check that their proof works also for our definition of canonical join representations).
  Conversely, suppose that $\TT$ has a canonical join representation. Then it should be of the form $\TT = \bigvee \{ \TTT(B) \mid B \in \SS \}$ for some set $\SS$ of bricks by Lemma \ref{lem:joinand-irred} and Theorem \ref{thm:btz}. Then the fact that $\SS$ should be a semibrick is shown in \cite[Proposition 3.5]{BCZ}. Thus $\TT$ is widely generated.

  (3)
  By the second proof of (1), we have maps $\ov{\CJR} \colon (\tors\AA)_0 \to \sbrick\AA$ and $\TTT \colon \sbrick\AA \to (\tors\AA)_0$ which are mutually inverse to each other.
\end{proof}
By considering the opposite category $\AA^{\op}$ and the lattice anti-isomorphism $^\perp(-) \colon \torf\AA \to \tors\AA$, one obtains the following dual result. We omit the definition of $\ov{\CMR} \colon (\tors\AA)^0 \to \sbrick\AA$, which is dual to $\ov{\CJR} \colon (\tors\AA)_0 \to \sbrick\AA$.
\begin{corollary}\label{cor:cmr-sbrick}
  Let $\AA$ be an abelian length category.
  Then we have the following mutually inverse bijections between $\sbrick\AA$ and $(\tors\AA)^0$.
    \[
      \begin{tikzcd}[column sep = large]
        \sbrick \AA \rar["^\perp (-)", shift left]
        & (\tors\AA)_0 \lar["\ov{\CMR}", shift left]
      \end{tikzcd}
    \]
\end{corollary}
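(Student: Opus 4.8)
The plan is to obtain Corollary~\ref{cor:cmr-sbrick} by dualizing Theorem~\ref{thm:cjr-sbrick}(3): first apply that theorem to the opposite category $\AA^{\op}$, and then transport the resulting statement along the lattice anti-isomorphism $^\perp(-)\colon\torf\AA\to\tors\AA$.

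First I would record the two dualities at play. Passing to $\AA^{\op}$ identifies $\brick\AA$ with $\brick(\AA^{\op})$ (being a brick is self-dual, since $\End_{\AA^{\op}}(B)=\End_\AA(B)^{\op}$ and the opposite ring of a division ring is a division ring), hence also $\sbrick\AA$ with $\sbrick(\AA^{\op})$ as the semibrick condition is symmetric; and it identifies $\tors(\AA^{\op})$ with $\torf\AA$ as posets, under which $\TTT_{\AA^{\op}}$ becomes $\FFF$. Therefore Theorem~\ref{thm:cjr-sbrick}(3) applied to $\AA^{\op}$ reads: for $\FF\in(\torf\AA)_0$ the set $\ov{\CJR}^{\AA^{\op}}(\FF)$ is a semibrick, and $\FFF\colon\sbrick\AA\to(\torf\AA)_0$ and $\ov{\CJR}^{\AA^{\op}}\colon(\torf\AA)_0\to\sbrick\AA$ are mutually inverse bijections.

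Next I would show that the anti-isomorphism $^\perp(-)\colon\torf\AA\to\tors\AA$ restricts to a bijection $(\torf\AA)_0\isoto(\tors\AA)^0$: a lattice anti-isomorphism reverses the order, hence turns joins into meets and refinements of join representations into the dual notion for meet representations, and it preserves antichains, so it carries canonical join representations to canonical meet representations. Composing $\FFF$ with this bijection and using the elementary identity $^\perp(\FFF(\SS))={}^\perp\SS$ — valid because $\{C\in\AA\mid\AA(M,C)=0\}$ is a torsion-free class for each $M\in\AA$, so $^\perp(-)$ of a collection depends only on its torsion-free closure — I get that $^\perp(-)\colon\sbrick\AA\to(\tors\AA)^0$ is a bijection. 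One then defines $\ov{\CMR}\colon(\tors\AA)^0\to\sbrick\AA$ by reading off $\CMR(\TT)\subseteq\mirr(\tors\AA)$ (a set of completely meet-irreducibles by the dual of Lemma~\ref{lem:joinand-irred}) and translating via the bijection $\mirr(\tors\AA)\isoto\brick\AA$, $\,{}^\perp B\mapsto B$, of Theorem~\ref{thm:btz}(2); unwinding the definitions shows $\ov{\CMR}$ equals the composite $\ov{\CJR}^{\AA^{\op}}\circ(-)^\perp$, hence is the inverse of $^\perp(-)$.

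The only point requiring care is bookkeeping: verifying that the brick attached to a completely meet-irreducible element $^\perp B\in\mirr(\tors\AA)$ by Theorem~\ref{thm:btz}(2) coincides with the brick attached to the completely join-irreducible element $\FFF(B)\in\jirr(\torf\AA)$ by Theorem~\ref{thm:btz}(1) for $\AA^{\op}$, under the identification $\mirr(\tors\AA)\cong\jirr(\torf\AA)$ induced by $^\perp(-)$. But this compatibility is precisely what is established inside the proof of Theorem~\ref{thm:btz}(2), so no genuinely new argument is needed — the corollary is a formal consequence of Theorem~\ref{thm:cjr-sbrick} together with the $\tors$–$\torf$ anti-isomorphism.
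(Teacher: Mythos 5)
Your proposal is correct and follows exactly the route the paper takes: the paper simply asserts that the corollary is obtained "by considering the opposite category $\AA^{\op}$ and the lattice anti-isomorphism $^\perp(-)\colon\torf\AA\to\tors\AA$," and your write-up just fills in the routine verifications (self-duality of bricks and semibricks, $\tors(\AA^{\op})\cong\torf\AA$, transport of canonical join representations to canonical meet representations, and the identity $^\perp\FFF(\SS)={}^\perp\SS$), all of which are accurate.
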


Combining Theorem \ref{thm:cjr-sbrick} and its dual Corollary \ref{cor:cmr-sbrick}, we obtain the following representation-theoretic interpretation of the extended kappa map. This also extends the bijections $\jirr (\tors\AA) \iso \brick\AA \iso \mirr(\tors\AA)$ given in Theorem \ref{thm:btz}.

\begin{theorem}[{c. f. \cite[Corollary 4.4.3]{BTZ}}]\label{thm:ext-kappa-bij}
  Let $\AA$ be an abelian length category. Then the extended kappa map gives a bijection $\kappa \colon (\tors\AA)_0 \isoto (\tors\AA)^0$. Moreover, we have the following commutative diagram consisting of bijections.
  \[
    \begin{tikzcd}
      (\tors\AA)_0 \rar["\ov{\CJR}"', shift right] \ar[rr, bend left, "\exk"]
      & \sbrick\AA \rar["^\perp(-)", shift left] \lar["\TTT"', shift right]
      \dar[shift left, "\Filt"]
      & (\tors\AA)^{0} \lar["\ov{\CMR}", shift left] \\
      & \wide\AA \uar[shift left, "\simp"] \ar[ul, bend left, "\TTT"]
      \ar[ur, bend right, "^\perp(-)"']
    \end{tikzcd}
  \]
  In particular, we have $\exk(\TTT(\SS)) = {}^\perp \SS$ for $\SS \in \sbrick\AA$ and $\exk(\TTT(\WW)) = {}^\perp \WW$ for $\WW \in \wide\AA$.
\end{theorem}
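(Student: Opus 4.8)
The plan is to assemble the diagram from bijections already established and then identify the extended kappa map with the composite along the top row. Theorem~\ref{thm:cjr-sbrick} supplies the mutually inverse bijections $\TTT\colon\sbrick\AA\to(\tors\AA)_0$ and $\ov{\CJR}\colon(\tors\AA)_0\to\sbrick\AA$, the bijections $\Filt\colon\sbrick\AA\to\wide\AA$ and $\simp\colon\wide\AA\to\sbrick\AA$, the bijections $\TTT\colon\wide\AA\to(\tors\AA)_0$ and $\WL\colon(\tors\AA)_0\to\wide\AA$, and the commutativities $\ov{\CJR}\circ\TTT=\simp$ and $\simp\circ\WL=\ov{\CJR}$ from Theorem~\ref{thm:cjr-sbrick}(4). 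Corollary~\ref{cor:cmr-sbrick} supplies the mutually inverse bijections ${}^\perp(-)\colon\sbrick\AA\to(\tors\AA)^0$ and $\ov{\CMR}\colon(\tors\AA)^0\to\sbrick\AA$. Consequently ${}^\perp(-)\circ\ov{\CJR}\colon(\tors\AA)_0\to(\tors\AA)^0$ is already a bijection, and what remains is to check that it equals $\exk$ and that the two triangles of the diagram involving $\wide\AA$ commute.

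The key computation is $\exk={}^\perp(-)\circ\ov{\CJR}$. Fix $\TT\in(\tors\AA)_0$ and put $\SS:=\ov{\CJR}(\TT)\in\sbrick\AA$; by the definition of $\ov{\CJR}$ together with Theorem~\ref{thm:btz}(1) we have $\CJR(\TT)=\{\TTT(B)\mid B\in\SS\}$. Since meets in $\tors\AA$ are intersections and $\kappa(\TTT(B))={}^\perp B$ by Theorem~\ref{thm:btz}(3),
\[
  \exk(\TT)=\bigwedge_{B\in\SS}\kappa(\TTT(B))=\bigcap_{B\in\SS}{}^\perp B={}^\perp\SS ,
\]
which is exactly ${}^\perp(-)$ applied to $\ov{\CJR}(\TT)$; hence $\exk$ restricts to a bijection $(\tors\AA)_0\isoto(\tors\AA)^0$. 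From this the two ``in particular'' formulas fall out: for $\SS\in\sbrick\AA$ one has $\ov{\CJR}(\TTT(\SS))=\SS$ by Theorem~\ref{thm:cjr-sbrick}(3), so $\exk(\TTT(\SS))={}^\perp\SS$; and for $\WW\in\wide\AA$, using $\WW=\Filt(\simp\WW)$ one gets $\TTT(\WW)=\TTT(\simp\WW)$ and ${}^\perp\WW={}^\perp(\simp\WW)$ (because $\AA(-,C)=0$ is preserved under extensions of $C$), whence $\exk(\TTT(\WW))={}^\perp(\simp\WW)={}^\perp\WW$. Together with $\TTT(\Filt\SS)=\TTT(\SS)$ and ${}^\perp(\Filt\SS)={}^\perp\SS$, this gives commutativity of all the small triangles, and every arrow in sight is a bijection.

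I do not expect a genuine obstacle: the substantive content — identifying $(\tors\AA)_0$ with semibricks and with wide subcategories, and identifying $\kappa$ with ${}^\perp(-)$ on bricks — is already carried out in Theorems~\ref{thm:btz} and~\ref{thm:cjr-sbrick} and Corollary~\ref{cor:cmr-sbrick}, so this theorem is essentially a bookkeeping assembly. The one point that needs a moment's care is the first equality in the displayed computation, namely that the lattice-theoretic meet $\bigwedge$ defining $\exk$ really is the intersection ${}^\perp\SS$; this is where one must invoke that $\bigwedge=\bigcap$ in $\tors\AA$ and that $\kappa(\TTT(B))={}^\perp B$, after which the whole statement is immediate.
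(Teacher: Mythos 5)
Your proposal is correct and follows essentially the same route as the paper: both reduce the theorem to the bijections and commutativities already established in Theorem~\ref{thm:cjr-sbrick} and Corollary~\ref{cor:cmr-sbrick}, and both identify $\exk$ with ${}^\perp(-)\circ\ov{\CJR}$ by the computation $\exk(\TTT(\SS))=\bigwedge\{\kappa(\TTT(B))\mid B\in\SS\}=\bigcap_{B\in\SS}{}^\perp B={}^\perp\SS$, using that $\TTT(\SS)=\bigvee\{\TTT(B)\mid B\in\SS\}$ is a canonical join representation, that $\kappa(\TTT(B))={}^\perp B$ by Theorem~\ref{thm:btz}, and that meets in $\tors\AA$ are intersections.
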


\begin{proof}
  The fact that the above diagram except $\exk$ is a commutative diagram consisting of bijections is shown in Theorem \ref{thm:cjr-sbrick} and Corollary \ref{cor:cmr-sbrick}. Thus it suffices to check $\exk(\TTT(\SS)) = {}^\perp \SS$ for a semibrick $\SS$. Since the expression $\TTT(\SS) = \bigvee \{ \TTT(B) \mid B \in \SS \}$ is a canonical join representation by Theorem \ref{thm:cjr-sbrick}, we have $\exk(\TTT(\SS)) = \bigwedge \{ \kappa(\TTT(B)) \mid B \in \SS \} = \bigwedge \{ {}^\perp B \mid B \in \SS \} = \bigcap_{B \in \SS} {}^\perp B = {}^\perp \SS$,
  where the second equality follows from Theorem \ref{thm:btz}.
\end{proof}
In particular, Question \ref{q:1} is true for $L = \tors\AA$.

\begin{remark}
  By considering torsion-free classes and the map $\WR \colon \torf\AA \to \wide\AA$ which is dual to $\WL$, or equivalently, by considering the opposite category $\AA^{\op}$ and $\WL \colon \tors (\AA^{\op}) \to \wide (\AA^{\op})$, the diagram in Theorem \ref{thm:ext-kappa-bij} can be completed into the following larger diagram consisting of bijections.
  From this diagram, we get the impression that the extended kappa map connects two kinds of dualities: the duality between $\AA$ and its opposite category $\AA^{\op}$, and the duality between $\tors\AA$ and $\torf\AA$ induced by perpendicular categories.
  \[
    \begin{tikzcd}
      & &
      (\tors\AA)^0 \dar[shift left, "(-)^\perp"] \ar[dl, "\ov{\CMR}"']
      \\
      (\tors\AA)_0 \rar["\ov{\CJR}"', shift right]
      \ar[urr, "\exk", bend left=15] \dar[equal]
      & \sbrick\AA \rar["\FFF", shift left] \lar["\TTT"', shift right]
      \dar[shift left, "\Filt"]
      & (\torf\AA)_0 \lar["\ov{\CJR}", shift left] \dar[equal]
      \uar[shift left, "^\perp(-)"]
      \\
      (\tors\AA)_0 \rar[shift left, "\WL"]
      & \wide\AA \uar[shift left, "\simp"] \lar["\TTT", shift left]
      \rar[shift right, "\FFF"']
      & (\torf\AA)_0 \lar[shift right, "\WR"']
    \end{tikzcd}
  \]
\end{remark}

\subsection{The kappa order and wide subcategories}
We have established a bijection $\TTT \colon \wide\AA \isoto (\tors\AA)_0$ in Theorem \ref{thm:cjr-sbrick}.
In this and next subsections, we recover the poset $\wide\AA$ using this bijection. More precisely, we define two partial orders on $L_0$ for a completely semidistributive lattice $L$: the \emph{kappa order $\leq_\kappa$} and the \emph{core label order $\leq_\clo$}, and show that $(\tors\AA)_0$ with these poset structures are isomorphic to $\wide\AA$ as posets.

First, we introduce the kappa order, which is defined using the extended kappa map.
\begin{definition}
  Let $L$ be a completely semidistributive lattice. Define a binary relation $\leq_\kappa$ on the set $L_0$ of elements with canonical join representations as follows: $a \leq_\kappa b$ if both $a \leq b$ and $\exk(a) \geq \exk(b)$ hold. This relation clearly gives a poset structure on $L_0$, which we call the \emph{kappa order}, and we denote by $L_\kappa$ the poset $(L_0, \leq_\kappa)$.
\end{definition}

The following is the main theorem of this subsection.
\begin{theorem}\label{thm:kappa-wide}
  Let $\AA$ be an abelian length category. Then the map $\TTT \colon \wide\AA \to \tors\AA$ induces a poset isomorphism $\wide\AA \isoto (\tors\AA)_\kappa$.
\end{theorem}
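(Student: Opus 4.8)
The strategy is to show that the bijection $\TTT \colon \wide\AA \isoto (\tors\AA)_0$ from Theorem \ref{thm:cjr-sbrick} is an isomorphism of posets when $(\tors\AA)_0$ carries the kappa order, i.e. that for wide subcategories $\WW_1, \WW_2$ we have $\WW_1 \subseteq \WW_2$ if and only if $\TTT(\WW_1) \leq_\kappa \TTT(\WW_2)$. Unwinding the definition of $\leq_\kappa$, the latter means $\TTT(\WW_1) \subseteq \TTT(\WW_2)$ and $\exk(\TTT(\WW_1)) \supseteq \exk(\TTT(\WW_2))$. By Theorem \ref{thm:ext-kappa-bij} we have $\exk(\TTT(\WW)) = {}^\perp\WW$, so the second condition reads ${}^\perp\WW_1 \supseteq {}^\perp\WW_2$. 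Thus the whole task reduces to the purely representation-theoretic statement
\[
  \WW_1 \subseteq \WW_2 \iff \big(\TTT(\WW_1) \subseteq \TTT(\WW_2) \text{ and } {}^\perp\WW_1 \supseteq {}^\perp\WW_2\big).
\]

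\textbf{The easy direction.} If $\WW_1 \subseteq \WW_2$, then $\TTT(\WW_1) \subseteq \TTT(\WW_2)$ is immediate from the definition of $\TTT$ as the smallest torsion class containing the subcategory, and ${}^\perp\WW_1 \supseteq {}^\perp\WW_2$ is immediate from the definition of ${}^\perp(-)$ (a larger subcategory has a smaller left-perpendicular category). So the forward implication is formal.

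\textbf{The hard direction.} Suppose $\TTT(\WW_1) \subseteq \TTT(\WW_2)$ and ${}^\perp\WW_1 \supseteq {}^\perp\WW_2$; we must deduce $\WW_1 \subseteq \WW_2$. Here I would use that a wide subcategory $\WW$ is recovered from its associated data: by Theorem \ref{thm:cjr-sbrick}(1)--(4), $\WW = \Filt(\simp\WW)$ and $\simp\WW = \ov{\CJR}(\TTT(\WW))$, the semibrick of bricks corresponding to $\CJR(\TTT(\WW)) = \jlabel_\downarrow \TTT(\WW)$. A brick $B$ lies in $\simp\WW$ precisely when $\TTT(B) \in \jlabel_\downarrow(\TTT(\WW)) = \jlabel[\TTT(\WW)_*, \TTT(\WW)]$ in the sense of the $\jlabel$ map (using Lemma \ref{lem:cjr-label} and Theorem \ref{thm:jlabel-label}), which by Definition \ref{def:jlabel} means $\TTT(B) \leq \TTT(\WW)$ and $\kappa(\TTT(B)) \geq \TTT(\WW)$; by Theorem \ref{thm:btz}(3) and Lemma \ref{lem:kappa-torf} this says exactly $B \in \TTT(\WW)$ and $B \in \TTT(\WW)^{\perp}$... but actually the cleaner route is: $\TTT(B) \leq \TTT(\WW)$ means $B \in \TTT(\WW)$, and $\kappa(\TTT(B)) = {}^\perp B \geq \TTT(\WW)$ combined with the identity $\exk(\TTT(\WW)) = {}^\perp\WW = \bigwedge\{{}^\perp B' : B' \in \simp\WW\}$... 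Let me instead argue directly: given $B \in \simp\WW_1$, I want $B \in \WW_2$, for which (since $\WW_2 = \Filt(\simp\WW_2)$) it suffices that $B$ is filtered by $\simp\WW_2$, and since $B$ is a brick it suffices to show $B \in \simp\WW_2$, i.e. $\TTT(B) \in \CJR(\TTT(\WW_2))$. The condition $\TTT(B) \in \CJR(\TTT(\WW_1))$ gives, via Lemma \ref{lem:join-ortho} applied inside $\CJR(\TTT(\WW_2))$ and Proposition \ref{prop:cjr-partial-inverse}: if I can show $\TTT(B) \leq \TTT(\WW_2)$ and that $\TTT(B)$ is ``$\kappa$-orthogonal'' to the other elements of $\CJR(\TTT(\WW_2))$, then $\TTT(B) \in \CJR(\TTT(\WW_2))$. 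The inequality $\TTT(B) \leq \TTT(\WW_1) \leq \TTT(\WW_2)$ is one hypothesis. For the orthogonality: for $j' \in \CJR(\TTT(\WW_2))$ we have $\kappa(j') = \exk$-type bound, and $\exk(\TTT(\WW_2)) = \bigwedge_{j' \in \CJR(\TTT(\WW_2))}\kappa(j') \leq {}^\perp\WW_1 = \exk(\TTT(\WW_1)) = \bigwedge_{j \in \CJR(\TTT(\WW_1))}\kappa(j) \leq \kappa(\TTT(B)) = {}^\perp B$; hmm, this gives the wrong direction. The \emph{main obstacle} is precisely to extract $B \in \WW_2$ from the two inequalities — the honest approach is probably to work at the level of wide subcategories and bricks using ${}^\perp\WW_2 \subseteq {}^\perp\WW_1$ together with $B \in \WW_1 \subseteq \TTT(\WW_1) \subseteq \TTT(\WW_2)$: one shows that a brick $B$ lying in both a torsion class $\TTT(\WW_2)$ and in ${}^\perp(\text{something})$ forces membership in the wide subcategory, using the characterization $\WW_2 = \WL(\TTT(\WW_2)) = \{W \in \TTT(\WW_2) : \ker f \in \TTT(\WW_2)\ \forall f\colon T \to W, T \in \TTT(\WW_2)\}$ from Proposition \ref{prop:wide-related}(2) and Theorem \ref{thm:cjr-sbrick}(2). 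I expect the cleanest writeup to invoke the already-established diagram: since $\simp\WW_1 = \ov{\CJR}(\TTT(\WW_1))$ is the semibrick of $\jlabel_\downarrow \TTT(\WW_1)$-labels and likewise for $\WW_2$, and $\jlabel[\TTT(\WW)_*, \TTT(\WW)] = \{B : \TTT(B) \leq \TTT(\WW),\ {}^\perp B \geq \TTT(\WW)\}$, for $B \in \simp\WW_1$ one has ${}^\perp B \geq \TTT(\WW_1)$; to upgrade to ${}^\perp B \geq \TTT(\WW_2)$ one needs more than the given hypotheses applied naively, so the proof likely reduces $\WW_1 \subseteq \WW_2$ to $\simp\WW_1 \subseteq \WW_2$ and then shows each brick of $\simp\WW_1$, being in $\TTT(\WW_2)$, is Ext-perpendicular-generated by $\simp\WW_2$ via the wide-subcategory structure — i.e. uses that $\WW_2$ is a wide subcategory of $\TTT(\WW_2)$ and $\simp\WW_1 \subseteq \TTT(\WW_1) \subseteq \TTT(\WW_2)$ forces $\simp\WW_1 \subseteq \WL(\TTT(\WW_2)) = \WW_2$ because ${}^\perp\WW_1 \supseteq {}^\perp\WW_2$ guarantees the kernels stay in $\TTT(\WW_2)$. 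Verifying this last kernel condition is the technical heart; everything else is unwinding the already-proven bijections.
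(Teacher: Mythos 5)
Your reduction is exactly the paper's: unwind $\leq_\kappa$ via $\exk(\TTT(\WW))={}^\perp\WW$ (Theorem \ref{thm:ext-kappa-bij}) so that everything comes down to
\[
  \WW_1 \subseteq \WW_2 \iff \bigl(\TTT(\WW_1)\subseteq\TTT(\WW_2)\ \text{and}\ {}^\perp\WW_1\supseteq{}^\perp\WW_2\bigr),
\]
and your forward implication is correct. But the backward (``hard'') direction is where your proposal has a genuine gap: you cycle through several strategies without completing any of them. The first one — reducing $B\in\WW_2$ to $B\in\simp\WW_2$, i.e.\ $\TTT(B)\in\CJR(\TTT(\WW_2))$ — is not merely incomplete but doomed, since $\WW_1\subseteq\WW_2$ does not force $\simp\WW_1\subseteq\simp\WW_2$ (take $Q\colon 1\to 2$, $\WW_2=\mod kQ$ and $\WW_1=\add P_1$: then $\simp\WW_1=\{P_1\}\not\subseteq\{S_1,S_2\}=\simp\WW_2$). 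Your fallback, that ``${}^\perp\WW_1\supseteq{}^\perp\WW_2$ guarantees the kernels stay in $\TTT(\WW_2)$'' so that $\simp\WW_1\subseteq\WL(\TTT(\WW_2))$, is asserted rather than proved, and you yourself flag it as the unverified technical heart.

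The missing idea in the paper is Lemma \ref{lem:ie-closed}: any subcategory $\CC$ closed under images and extensions satisfies $\CC=\TTT(\CC)\cap\FFF(\CC)$ (a nontrivial three-step pullback/pushout argument). Granting this, the hard direction is immediate: the anti-isomorphism $(-)^\perp\colon\tors\AA\to\torf\AA$ converts ${}^\perp\WW_1\supseteq{}^\perp\WW_2$ into $\FFF(\WW_1)=({}^\perp\WW_1)^\perp\subseteq({}^\perp\WW_2)^\perp=\FFF(\WW_2)$, whence
\[
  \WW_1=\TTT(\WW_1)\cap\FFF(\WW_1)\subseteq\TTT(\WW_2)\cap\FFF(\WW_2)=\WW_2.
\]
So the correct move is to pass from the torsion-class inequality ${}^\perp\WW_1\supseteq{}^\perp\WW_2$ to the torsion-free-class containment $\FFF(\WW_1)\subseteq\FFF(\WW_2)$ and then recover each $\WW_i$ as the intersection of its torsion and torsion-free closures — not to chase individual bricks through canonical join representations or the kernel condition defining $\WL$.
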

To prove this, we need the following general observation of subcategories. The author would like to thank Osamu Iyama for sharing the proof of this.
\begin{lemma}\label{lem:ie-closed}
  Let $\AA$ be an abelian length category and $\CC$ a subcategory of $\AA$ which is closed under images and extensions. Then the equality $\CC = \TTT(\CC) \cap \FFF(\CC)$ holds.
\end{lemma}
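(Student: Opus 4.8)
The plan is to show the two inclusions $\CC \subseteq \TTT(\CC)\cap\FFF(\CC)$ and $\TTT(\CC)\cap\FFF(\CC) \subseteq \CC$ separately. The first inclusion is immediate: $\CC \subseteq \TTT(\CC)$ and $\CC \subseteq \FFF(\CC)$ by the very definitions of the torsion closure and torsion-free closure, so $\CC \subseteq \TTT(\CC)\cap\FFF(\CC)$. The content is in the reverse inclusion, and the natural approach is to use the torsion pair structure: since $\TTT(\CC)$ is a torsion class, every object $M \in \AA$ fits into a canonical short exact sequence $0 \to tM \to M \to M/tM \to 0$ with $tM \in \TTT(\CC)$ and $M/tM \in \TTT(\CC)^\perp$. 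Dually, there is a torsion-free-class version coming from $\FFF(\CC)$.

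First I would give an explicit description of $\TTT(\CC)$ when $\CC$ is closed under images and extensions. I claim $\TTT(\CC) = \Filt(\Fac\,\CC)$, where $\Fac\,\CC$ denotes the quotients of objects of $\CC$; more usefully, since $\CC$ is already closed under images (hence, I would check, under quotients — any quotient $C \twoheadrightarrow N$ is the image of the map $C \to N$, both in $\CC$), one gets $\Fac\,\CC = \CC$ and therefore $\TTT(\CC) = \Filt(\CC)$. So every $M \in \TTT(\CC)$ has a filtration $0 = M_0 \subseteq M_1 \subseteq \cdots \subseteq M_n = M$ with all subquotients $M_i/M_{i-1} \in \CC$. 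Dually, every $M \in \FFF(\CC)$ has a filtration with subquotients in $\CC$ (here using that $\CC$ is closed under images and extensions, hence under subobjects by the dual argument, so $\Sub\,\CC = \CC$ and $\FFF(\CC) = \Filt(\CC)$ as well — note both closures coincide as subcategories but carry different filtration orders).

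Now take $M \in \TTT(\CC)\cap\FFF(\CC)$. I would argue by induction on the length of $M$. Pick a filtration of $M$ witnessing $M \in \TTT(\CC) = \Filt(\CC)$, so there is a short exact sequence $0 \to M' \to M \to C \to 0$ with $M' \in \Filt(\CC)$ and $C \in \CC$, where $M'$ has strictly smaller length (if $M' = 0$ then $M = C \in \CC$ and we are done). The key point is that $M' \in \FFF(\CC)$ as well: $\FFF(\CC)$ is a torsion-free class, hence closed under subobjects, and $M' \subseteq M \in \FFF(\CC)$. Also $M' \in \TTT(\CC)$ since $\TTT(\CC)$ is closed under... no — rather, $M'$ still lies in $\Filt(\CC) = \TTT(\CC)$ directly from the chosen filtration. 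So by induction $M' \in \CC$. It remains to produce $M$ from $M'$ and $C$ by operations preserving $\CC$; the short exact sequence $0 \to M' \to M \to C \to 0$ with $M', C \in \CC$ gives $M \in \CC$ by closure under extensions. This completes the induction and the proof.

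The main obstacle I anticipate is justifying cleanly that a subcategory closed under images and extensions is automatically closed under quotients (and, via the opposite category, under subobjects), so that both $\TTT(\CC)$ and $\FFF(\CC)$ are realized as $\Filt(\CC)$. Once that is in hand — it should follow from the factorization of an epimorphism $C \twoheadrightarrow N$ as the image of $C \to N$ with both terms in $\CC$ — the induction is routine, the only subtlety being to keep track of which filtration (the one from $\TTT(\CC)$) one uses, while extracting the torsion-free membership of the subobject $M'$ from the fact that $\FFF(\CC)$ is closed under subobjects. An alternative, slightly slicker route avoiding induction would be to write the canonical sequence $0 \to t_{\FFF} M \to M \to M'' \to 0$ where $M'' \in \FFF(\CC)^{\perp}$-type class, but the inductive argument above is the most transparent.
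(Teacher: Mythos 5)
There is a genuine gap, and it sits at the very foundation of your argument: the claim that a subcategory closed under images and extensions is automatically closed under quotients is false, and your justification for it is circular. To conclude that the image of $C \to N$ lies in $\CC$ via image-closure, the paper's definition requires \emph{both} the source and the target of the morphism to lie in $\CC$ --- but $N \in \CC$ is exactly what you are trying to prove. A concrete counterexample: in $\mod k(1\to 2)$ the subcategory $\add P_1$ (with $P_1$ the projective cover of the simple $S_1$) is wide, hence closed under images and extensions, yet the quotient $S_1$ of $P_1$ is not in it. Indeed, if your claim were true, every such $\CC$ would already be a torsion class and a torsion-free class, and the lemma would be vacuous; the whole point is that $\CC$ (e.g.\ a wide subcategory) is generally \emph{not} closed under quotients or subobjects. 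Consequently $\TTT(\CC) \neq \Filt(\CC)$ in general: the correct description is $\TTT(\CC) = \Filt(\Fac\CC)$ and $\FFF(\CC) = \Filt(\Sub\CC)$, so the subquotients in your filtration of $M$ lie only in $\Fac\CC$, not in $\CC$. Your induction then collapses at the last step, since the sequence $0 \to M' \to M \to C \to 0$ has $C \in \Fac\CC$ only, and extension-closure of $\CC$ does not apply.

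For comparison, the paper's proof has to work considerably harder precisely because of this. It proceeds in three steps: first, $\Fac\CC \cap \Sub\CC \subseteq \CC$, which is the one place image-closure is legitimately used (an object that is simultaneously a quotient of $C_1 \in \CC$ and a subobject of $C_2 \in \CC$ is the image of the composite $C_1 \twoheadrightarrow X \hookrightarrow C_2$); second, $\TTT(\CC) \cap \Sub\CC \subseteq \CC$ by induction on the $(\Fac\CC)$-filtration length, using a pullback along a surjection $C \twoheadrightarrow Z$ with $C \in \CC$ to replace the top factor by an honest object of $\CC$ and reduce to Step 1; third, $\TTT(\CC) \cap \FFF(\CC) \subseteq \CC$ by a dual induction on the $(\Sub\CC)$-filtration length using a pushout. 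Your instinct to induct on a filtration is right, but without the pullback/pushout devices to trade factors in $\Fac\CC$ or $\Sub\CC$ for genuine objects of $\CC$, the argument cannot close.
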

\begin{proof}
  Clearly $\CC$ is contained in $\TTT(\CC) \cap \FFF(\CC)$. Thus we only prove $\TTT(\CC) \cap \FFF(\CC) \subseteq \CC$.
  To this aim, we will need the following well-known description: $\TTT(\CC) = \Filt(\Fac \CC)$ and $\FFF(\CC) = \Filt(\Sub\CC)$ hold, where $\Fac \CC$ (resp. $\Sub\CC$) consists of $M$ such that there is a surjection $C \defl M$ (resp. an injection $M \hookrightarrow C$) with $C \in \CC$. See e.g. \cite[Lemma 3.1]{MS} for a proof.
  We divide the proof into three steps.

  \un{(Step 1): $\Fac\CC\cap\Sub\CC \subseteq \CC$}. Let $X$ be an object in $\Fac \CC \cap \Sub \CC$.
  Then there exist a surjection $C_1 \defl X$ with $C_1 \in \CC$ and an injection $X \hookrightarrow C_2$ with $C_2 \in \CC$. Thus $X$ is the image of the composition $\varphi \colon C_1 \defl X \hookrightarrow C_2$. Since $\CC$ is closed under images, we have $X = \im \varphi \in \CC$.

  \un{(Step 2): $\TTT(\CC) \cap \Sub \CC \subseteq \CC$}. Let $X$ be in $\TTT(\CC) \cap \Sub \CC$.
  Recall that $\TTT(\CC) = \Filt(\Fac\CC)$.
  We will show $X \in \CC$ by induction on the $(\Fac\CC)$-filtration length $n$ of $X$. If $n=1$, then this follows from (Step 1).
  Suppose $n>1$. There is a short exact sequence
    \[
    \begin{tikzcd}
      0 \rar & Y \rar & X \rar & Z \rar & 0,
    \end{tikzcd}
    \]
  where $Z$ is in $\Fac\CC$ and the $(\Fac\CC)$-filtration length of $Y$ is smaller than $n$. Since $X$ is in $\Sub\CC$, so is $Y$. By the induction hypothesis, we have $Y\in\CC$.
  Since $Z$ is in $\Fac\CC$, there is a surjection $C \defl Z$ with $C \in\CC$. Then we obtain the following pullback diagram.
    \[
    \begin{tikzcd}
      0 \rar & Y \rar \dar[equal]
      & E \dar[twoheadrightarrow] \rar \ar[rd, phantom, "{\rm p.b.}"]
      & C \rar \dar[twoheadrightarrow] & 0 \\
      0 \rar & Y \rar & X \rar & Z \rar & 0
    \end{tikzcd}
    \]
  Since $\CC$ is closed under extensions, we have $E \in \CC$. Then $X$ is in $\Fac\CC$, thus we obtain $X \in \Fac\CC \cap \Sub \CC \subseteq \CC$ by (Step 1).

  \un{(Step 3): $\TTT(\CC) \cap \FFF(\CC) \subseteq \CC$}. Let $X$ be in $\TTT(\CC) \cap \FFF(\CC)$.
  Recall that $\FFF(\CC) = \Filt(\Sub\CC)$ holds.
  We show $X \in \CC$ by the induction on the $(\Sub\CC)$-filtration length $n$ of $X$. If $n=1$, then this follows from (Step 2).
  Suppose $n>1$. There is a short exact sequence
    \[
    \begin{tikzcd}
      0 \rar & Y \rar & X \rar & Z \rar & 0,
    \end{tikzcd}
    \]
  where $Y$ is in $\Sub\CC$ and the $(\Sub\CC)$-filtration length of $Z$ is smaller than $n$. Since $X$ is in $\TTT(\CC)$, so is $Z$. By the induction hypothesis, we have $Z\in\CC$. Since $Y$ is in $\Sub\CC$, there is an injection $Y \hookrightarrow C$ with $C\in \CC$. Then we can take the following pushout diagram.
    \[
    \begin{tikzcd}
      0 \rar
      & Y \rar \dar[hookrightarrow] \ar[rd, phantom, "{\rm p.o.}"]
      & X \dar[hookrightarrow] \rar & Z \rar \dar[equal] & 0 \\
      0 \rar & C \rar & E \rar & Z \rar & 0
    \end{tikzcd}
    \]
  Since $\CC$ is closed under extensions, $E\in\CC$ holds, hence $X$ is in $\Sub\CC$. By (Step 2), we have $X\in\CC$.
\end{proof}

Now we are ready to prove Theorem \ref{thm:kappa-wide}.
\begin{proof}[Proof of Theorem \ref{thm:kappa-wide}.]
  By Theorem \ref{thm:cjr-sbrick}, we have a bijection $\TTT \colon \wide\AA \isoto (\tors\AA)_0$. Therefore, in order to prove that this map is a poset isomorphism, it suffices to show that $\WW_1 \subseteq \WW_2$ holds if and only if $\TTT(\WW_1) \leq_\kappa \TTT(\WW_2)$ holds for $\WW_1, \WW_2 \in \wide\AA$.
  This can be proved as follows:
  \begin{align*}
    \TTT(\WW_1) \leq_\kappa \TTT(\WW_2)
    & \Longleftrightarrow \TTT(\WW_1) \leq \TTT(\WW_2) \text{ and }
    \exk(\TTT(\WW_1)) \geq \exk(\TTT(\WW_2))
    \text{ in $\tors\AA$} \\
    & \Longleftrightarrow \TTT(\WW_1) \subseteq \TTT(\WW_2) \text{ and }
    {}^\perp \WW_1 \supseteq {}^\perp \WW_2 \\
    & \Longleftrightarrow \TTT(\WW_1) \subseteq \TTT(\WW_2) \text{ and }
    ({}^\perp\WW_1)^\perp \subseteq ({}^\perp\WW_2)^\perp \\
    & \Longleftrightarrow \TTT(\WW_1) \subseteq \TTT(\WW_2) \text{ and }
    \FFF(\WW_1) \subseteq \FFF(\WW_2) \\
    & \Longleftrightarrow \WW_1 \subseteq \WW_2
  \end{align*}
  Here the second equivalence follows from Theorem \ref{thm:ext-kappa-bij}, the third from the poset anti-isomorphism $(-)^\perp \colon \tors\AA \to \torf\AA$, and the last as follows: the implication $\Leftarrow$ is clear, and the converse $\Rightarrow$ follows from Lemma \ref{lem:ie-closed} since wide subcategories are closed under images and extensions.
\end{proof}

\subsection{The core label order and wide subcategories}
In this subsection, we give another poset structure on $L_0$ for a completely semidistributive lattice $L$.

First let us mention the terminology and the background of the core label order. Reading studied the poset of regions associated to a hyperplane arrangement and
introduced the \emph{shard intersection order} in \cite{reading-shard}, which is another poset (actually lattice) structure on the poset of regions. The typical example is the shard intersection order on a finite Coxeter group $W$, where the poset of regions is precisely the weak order on $W$.

Then the shard intersection order was generalized to another poset structure on an arbitrary finite \emph{congruence-uniform} lattice in \cite[Section 9-7.4]{reading-region}. Here we omit the definition of congruence-uniform lattice, but we only note that congruence-uniform lattices are special cases of semidistributive lattices. Then the term \emph{core label order} was introduced by M\"uhle in \cite{muhle} to distinguish this lattice-theoretically defined partial order and Reading's geometrically defined lattice structure.
This core label order on (particular) congruence-uniform lattices was studied in several authors, e.g. in \cite{cjc-biclosed, gm-flip, muhle}.

Since the definition of the core label order on finite congruence-uniform lattices has a natural generalization to (possibly infinite) completely semidistributive lattices, we only state it.
Recall that $L_0$ is the set of elements of $L$ with canonical join representations, and also note that $L= L_0$ holds for a finite semidistributive lattice by Proposition \ref{prop:cjr-char}.
\begin{definition}\label{def:clo}
  Let $L$ be a completely semidistributive lattice.
  \begin{enumerate}
    \item Let $x \in L_0$. Define $x_\downarrow$ as follows:
    \[
      x_\downarrow = x \wedge \bigwedge \{ x' \in L \mid x' \covd x \}.
    \]
    \item For $x, y \in L_0$, we write $x \leq_\clo y$ if $\jlabel [x_\downarrow, x] \subseteq \jlabel [y_\downarrow, y]$. We denote by $L_\clo$ the poset $(L_0, \leq_\clo)$.
  \end{enumerate}
\end{definition}
We also recall that $\jlabel$ can be regarded as considering the set of join-irreducible labels appearing in each interval by Theorem \ref{thm:jlabel-label}.

The fact that $\leq_\clo$ is a poset structure on $L_0$, namely, the fact that $x \leq_\clo y \leq_\clo x$ implies $x = y$, follows from the equality $x = \bigvee \jlabel [x_\downarrow, x]$, which can be proved as follows.
For $x \in L_0$, we have $x = \bigvee \CJR(x) = \bigvee \jlabel_\downarrow x$ by Lemma \ref{lem:cjr-label}. On the other hand, we clearly have $\jlabel_\downarrow x \subseteq \jlabel [x_\downarrow, x]$, hence we obtain $x = \bigvee \jlabel_\downarrow x \leq \bigvee \jlabel [x_\downarrow, x] \leq x$, which shows the assertion.

Now we can prove the following result on the core label order.
\begin{theorem}\label{thm:clo-wide}
  Let $\AA$ be an abelian length category. Then the map $\TTT \colon \wide\AA \to \tors\AA$ induces a poset isomorphism $\wide\AA \isoto (\tors\AA)_\clo$.
\end{theorem}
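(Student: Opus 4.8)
The plan is to bootstrap from Theorem \ref{thm:kappa-wide}: since $\TTT\colon\wide\AA\to(\tors\AA)_0$ is already known to be a bijection by Theorem \ref{thm:cjr-sbrick}(2), it suffices to prove that for $\WW_1,\WW_2\in\wide\AA$ one has $\WW_1\subseteq\WW_2$ if and only if $\TTT(\WW_1)\leq_\clo\TTT(\WW_2)$. I expect the technical heart of the argument to be the identity
\[
  x_\downarrow = x\wedge\exk(x)\qquad\text{for every }x\in(\tors\AA)_0,
\]
which in fact holds in any completely semidistributive lattice. To establish it I would use the bijection from the proof of Lemma \ref{lem:cjr-label} between the lower covers of $x$ and the set $\CJR(x)$, given by $y\mapsto\gamma(x\to y)$: for a lower cover $y\covd x$, putting $j:=\gamma(x\to y)$, Theorem \ref{thm:kappa-bij}(2) gives $\mu(x\to y)=\kappa(j)$, and then Lemma \ref{lem:label-char}(2) gives $y=x\wedge\mu(x\to y)=x\wedge\kappa(j)$. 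Taking the meet over all lower covers of $x$ yields $\bigwedge\{y\in L\mid y\covd x\}=x\wedge\bigwedge\{\kappa(j)\mid j\in\CJR(x)\}=x\wedge\exk(x)$, hence $x_\downarrow=x\wedge\bigwedge\{y\mid y\covd x\}=x\wedge\exk(x)$. (The remaining case, $x$ having no lower cover, forces $x$ to be the least element and $\CJR(x)=\varnothing$, so both sides equal $x$.)

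With this identity, for $x:=\TTT(\WW)$ with $\WW\in\wide\AA$ (so $x\in(\tors\AA)_0$ and $\WW=\WL(x)$ by Theorem \ref{thm:cjr-sbrick}(2)) I would rewrite, using $\exk(x)={}^\perp\WW$ from Theorem \ref{thm:ext-kappa-bij} and the fact that meets in $\tors\AA$ are intersections,
\[
  [x_\downarrow,x]=[x\wedge\exk(x),x]=[\TTT(\WW)\cap{}^\perp\WW,\ \TTT(\WW)].
\]
By Theorem \ref{thm:itv-char}(1) this is a wide interval with heart $\HH_{[x_\downarrow,x]}=\WW$. Hence the commutative square \eqref{eq:diagram} from the proof of Theorem \ref{thm:main} (which says $\Phi(\brick\HH_{[\UU,\TT]})=\jlabel[\UU,\TT]$, where $\Phi\colon\2^{\brick\AA}\isoto\2^{\jirr L}$ is the power-set isomorphism induced by $B\mapsto\TTT(B)$) yields $\jlabel[x_\downarrow,x]=\Phi(\brick\WW)$.

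Putting the pieces together, for $\WW_1,\WW_2\in\wide\AA$ with $x_i:=\TTT(\WW_i)$ one gets the chain of equivalences: $\TTT(\WW_1)\leq_\clo\TTT(\WW_2)$ iff $\jlabel[(x_1)_\downarrow,x_1]\subseteq\jlabel[(x_2)_\downarrow,x_2]$ iff $\Phi(\brick\WW_1)\subseteq\Phi(\brick\WW_2)$ iff $\brick\WW_1\subseteq\brick\WW_2$ iff $\WW_1\subseteq\WW_2$, where the last step is Lemma \ref{lem:heart-brick} applied to the torsion hearts $\WW_1,\WW_2$ and the penultimate one uses that $\Phi$ is a poset isomorphism. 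This shows $\TTT$ induces a poset isomorphism $\wide\AA\isoto(\tors\AA)_\clo$. The only genuinely new ingredient is the identity $x_\downarrow=x\wedge\exk(x)$; everything else is bookkeeping with results already established. As an immediate byproduct, comparing with Theorem \ref{thm:kappa-wide} shows that $\leq_\kappa$ and $\leq_\clo$ coincide on $(\tors\AA)_0$, which is worth recording separately.
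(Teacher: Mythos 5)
Your proof is correct, and its overall architecture is the same as the paper's: reduce to the bijection $\TTT\colon\wide\AA\isoto(\tors\AA)_0$, identify $[\TTT(\WW)_\downarrow,\TTT(\WW)]$ with the wide interval $[\TTT(\WW)\cap{}^\perp\WW,\TTT(\WW)]$ whose heart is $\WW$, and then run the chain $\WW_1\subseteq\WW_2\Leftrightarrow\brick\WW_1\subseteq\brick\WW_2\Leftrightarrow\jlabel[\cdot,\cdot]\subseteq\jlabel[\cdot,\cdot]$ via Lemma \ref{lem:heart-brick} and the commutative square \eqref{eq:diagram}. The one place where you diverge is the key identity $\TTT(\WW)_\downarrow=\TTT(\WW)\wedge{}^\perp\WW$: the paper simply cites \cite[Proposition 3.3]{ES} for it, whereas you derive it from the purely lattice-theoretic statement $x_\downarrow=x\wedge\exk(x)$ for $x\in L_0$ in any completely semidistributive lattice, using the bijection $y\mapsto\gamma(x\to y)$ between lower covers of $x$ and $\CJR(x)$ from the proof of Lemma \ref{lem:cjr-label}, together with $\mu(x\to y)=\kappa(\gamma(x\to y))$ and $y=x\wedge\mu(x\to y)$ from Theorem \ref{thm:kappa-bij}(2) and Lemma \ref{lem:label-char}(2); the interchange $\bigwedge_j(x\wedge\kappa(j))=x\wedge\bigwedge_j\kappa(j)$ holds in any complete lattice, and your treatment of the cover-free case is right. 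This buys you a self-contained argument independent of \cite{ES} and a general lattice fact that is of independent interest (it makes the comparison with $\leq_\kappa$ and with Proposition \ref{prop:coincide-condition} transparent), at the cost of a slightly longer proof. No gaps.
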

\begin{proof}
  Since we have a bijection $\TTT \colon \wide\AA \isoto (\tors\AA)_0$ by Theorem \ref{thm:ext-kappa-bij}, we only have to show that $\TTT(\WW_1) \subseteq \TTT(\WW_2)$ for $\WW_1,\WW_2 \in \wide\AA$ if and only if $\TTT(\WW_1) \leq_\clo \TTT(\WW_2)$.

  By Theorem \ref{thm:itv-char}(1), we have that $\WW_i$ is the heart of $[\TTT(\WW_i) \wedge {}^\perp\WW_i, \TTT(\WW_i)]$ for $i=1,2$. On the other hand, the equality $\TTT(\WW_i)_\downarrow = \TTT(\WW_i) \wedge {}^\perp \WW_i$ is known, see e.g. \cite[Proposition 3.3]{ES}.
  Combining these facts with Lemma \ref{lem:heart-brick}, we can prove the assertion as follows.
  \begin{align*}
    \WW_1 \subseteq \WW_2
    & \Longleftrightarrow \brick \WW_1 \subseteq \brick \WW_2 \\
    & \Longleftrightarrow \jlabel [\TTT(\WW_1)_\downarrow, \TTT(\WW_1)]
    \subseteq  \jlabel [\TTT(\WW_2)_\downarrow, \TTT(\WW_2)] \\
    & \Longleftrightarrow \TTT(\WW_1) \leq_\clo \TTT(\WW_2)
  \end{align*}
  Here the first equivalence follows from Lemma \ref{lem:heart-brick}, and the second from the commutative diagram (\ref{eq:diagram}) in the proof of Theorem \ref{thm:main}.
\end{proof}

Combining Theorems \ref{thm:kappa-wide} and \ref{thm:clo-wide}, we obtain the following two descriptions for $\wide \AA$:
\begin{corollary}\label{cor:two-coincide-wide}
  Let $\AA$ be an abelian length category. Then $(\tors\AA)_\kappa$ and $(\tors\AA)_\clo$ coincide, and the map $\TTT \colon \wide\AA \to \tors\AA$ induces a poset isomorphism between $\wide\AA$ and these posets. 
\end{corollary}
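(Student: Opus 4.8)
The plan is to deduce this corollary almost formally from the two preceding theorems. Both $\leq_\kappa$ and $\leq_\clo$ are partial orders on the \emph{same} underlying set $(\tors\AA)_0$, and by Theorem \ref{thm:cjr-sbrick}(2) (equivalently, by Theorem \ref{thm:ext-kappa-bij}) the map $\TTT$ restricts to a bijection of sets $\wide\AA \isoto (\tors\AA)_0$. So the whole content to be verified is that this single bijection transports the inclusion order on $\wide\AA$ to each of the two orders — and that is exactly what Theorem \ref{thm:kappa-wide} and Theorem \ref{thm:clo-wide} assert.

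Concretely, I would first record the two equivalences established in the proofs of those theorems: for $\WW_1, \WW_2 \in \wide\AA$ one has $\WW_1 \subseteq \WW_2 \iff \TTT(\WW_1) \leq_\kappa \TTT(\WW_2)$ and $\WW_1 \subseteq \WW_2 \iff \TTT(\WW_1) \leq_\clo \TTT(\WW_2)$. Then, given arbitrary $x, y \in (\tors\AA)_0$, I would use surjectivity of $\TTT \colon \wide\AA \to (\tors\AA)_0$ to write $x = \TTT(\WW_1)$ and $y = \TTT(\WW_2)$, and chain the two equivalences to obtain $x \leq_\kappa y \iff \WW_1 \subseteq \WW_2 \iff x \leq_\clo y$. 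Hence the relations $\leq_\kappa$ and $\leq_\clo$ literally coincide on $(\tors\AA)_0$, i.e. the identity map is a poset isomorphism $(\tors\AA)_\kappa = (\tors\AA)_\clo$; the isomorphism with $\wide\AA$ is then a verbatim restatement of either Theorem \ref{thm:kappa-wide} or Theorem \ref{thm:clo-wide}.

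I expect no real obstacle here: all the genuine work sits in Theorems \ref{thm:kappa-wide} and \ref{thm:clo-wide}, and upstream in Theorem \ref{thm:ext-kappa-bij}, Theorem \ref{thm:main}, and Lemma \ref{lem:ie-closed}. The only subtlety worth flagging is that the coincidence of $\leq_\kappa$ and $\leq_\clo$ is \emph{not} a general lattice-theoretic phenomenon — it fails already for finite congruence-uniform lattices, see Example \ref{ex:not-same} — so I would resist phrasing the corollary for an abstract completely semidistributive $L$; the hypothesis $L = \tors\AA$ is essential, and it is precisely the representation-theoretic dictionary (bricks, semibricks, perpendicular categories) that forces the two orders to agree.
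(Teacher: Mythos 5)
Your proposal is correct and is exactly the argument the paper intends: the corollary is stated as an immediate consequence of Theorems \ref{thm:kappa-wide} and \ref{thm:clo-wide}, both of which transport the inclusion order on $\wide\AA$ through the single bijection $\TTT \colon \wide\AA \isoto (\tors\AA)_0$, so the two orders must agree. Your remark that the coincidence is special to $L = \tors\AA$ and fails for general congruence-uniform lattices is also consistent with the paper's Example \ref{ex:not-same}.
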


\begin{remark}
  Let us mention several results of Garver and McConville in \cite{gm-flip,gm-tiling}. They describe the posets $\tors\Lambda_T$ and $\wide\Lambda_T$ combinatorially for a \emph{tiling algebra $\Lambda_T$}, which is a particular representation-finite algebra associated with a tree $T$ embedded in a disk.
  In \cite{gm-tiling}, they proved that $\tors\Lambda_T$ and $\wide\Lambda_T$ are isomorphic to the \emph{oriented flip graph $\overrightarrow{FG}(T)$} and the lattice of \emph{noncrossing tree partitions $NCP(T)$} as lattices.
  On the other hand, in \cite{gm-flip}, they proved that $NCP(T)$ is isomorphic to $(\overrightarrow{FG}(T), \leq_\clo)$.
  In particular, they proved that $\wide\Lambda_T$ is isomorphic to $(\tors\Lambda_T, \leq_\clo)$. Thus our result can be regarded as a generalization of their result to any abelian length category, as expected in the introduction of \cite{gm-tiling}.
\end{remark}

In general, $L_\kappa$ and $L_\clo$ do not coincide even for a finite congruence-uniform lattice $L$, as the following example shows.
\begin{example}[taken from {\cite[Figure 7]{muhle}}]\label{ex:not-same}
  Let $L$ be the following lattice, where we show the Hasse quiver and its join-irreducible labeling (the label $n$ corresponds to $j_n$).
  \[
    \begin{tikzcd}
      & 1 \ar[ld, "3" description] \dar["2" description] \ar[rd, "1" description] \\
      x \ar[dd, "2" description] \ar[rd, "1" description]
      & y \ar[ldd, "3" description] \ar[rdd, "1" description]
      & z \ar[dd, "2" description] \ar[ld, "3" description] \\
      & j_4 \dar["4" description] \\
      j_1 \ar[rd, "1" description]
      & j_2 \dar["2" description]
      & j_3 \ar[ld, "3" description] \\
      & 0
    \end{tikzcd}
  \]
  This $L$ is completely semidistributive, and moreover, it is congruence-uniform. Since $L$ is finite, we have $L = L_0$. However, $\leq_\clo$ and $\leq_\kappa$ does not coincide, as Figure \ref{fig:not-same} shows. For example, we have $j_4 \leq_\clo x$ since $\jlabel [(j_4)_\downarrow, j_4] = \jlabel[j_2,j_4] = \{j_4\}$ and $\jlabel[x_\downarrow, x] = \jlabel[0,x] = \{j_1,j_2,j_4\}$.
  On the other hand, we have $j_4 \leq x$ but $\exk(j_4) = j_2 \not\geq \exk(x) = j_3$, hence $j_4 \not\leq_\kappa x$.

  \begin{figure}[htp]
    \centering
    \caption{Two poset structures on $L$}
    \label{fig:not-same}
    \begin{minipage}{0.45\textwidth}
      \centering
      \begin{tikzcd}[column sep = small]
        & 1 \ar[dl] \dar \ar[dr] \ar[ddrr, bend left] \\
        x \dar \ar[dr]  &
        y \ar[dl] \ar[dr] &
        z \ar[dl] \dar \\
        j_1 \ar[dr] & j_2 \dar & j_3 \ar[dl] & j_4 \ar[dll] \\
        & 0
      \end{tikzcd}
      \subcaption{The kappa order $(L,\leq_\kappa)$}
    \end{minipage}
    \begin{minipage}{0.45\textwidth}
      \centering
      \begin{tikzcd}[column sep = small]
        & 1 \ar[dl] \dar \ar[dr] \\
        x \dar \ar[dr] \ar[drrr] &
        y \ar[dl] \ar[dr] &
        z \ar[dl] \dar \ar[dr] \\
        j_1 \ar[dr] & j_2 \dar & j_3 \ar[dl] & j_4 \ar[dll] \\
        & 0
      \end{tikzcd}
      \subcaption{The core label order $(L,\leq_\clo)$}
    \end{minipage}
  \end{figure}
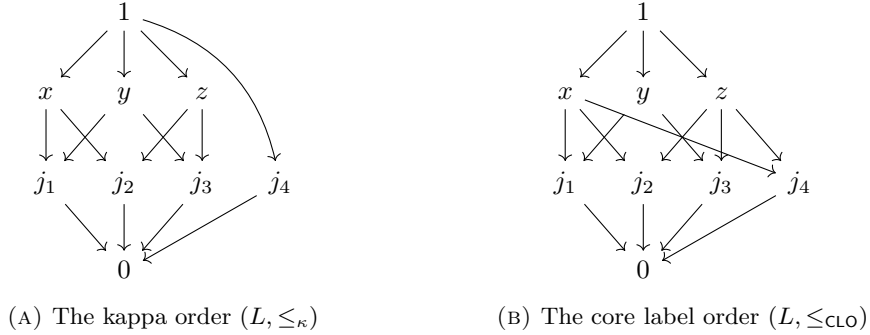
\end{example}

Therefore, we have the following natural question.
\begin{question}
  Let $L$ be a completely semidistributive lattice. When do the kappa order and the core label order on $L_0$ coincide?
\end{question}
We have the following sufficient condition, which we will use later.
\begin{proposition}\label{prop:coincide-condition}
  Let $L$ be a completely semidistributive lattice. Suppose that the following equality holds for each $x \in L_0$:
  \[
    \jlabel [x_\downarrow, x]
    = \{ j \in \jirr L \mid \text{$j \leq x$ and $\kappa(j) \geq \exk(x)$}\}.
  \]
  Then the two partial orders $\leq_\kappa$ and $\leq_\clo$ on $L_0$ coincide.
\end{proposition}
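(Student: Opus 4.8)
The key observation is that the hypothesis lets us recover \emph{both} $x$ and $\exk(x)$ from the single set $\jlabel[x_\downarrow,x]$, after which the equivalence of the two orders becomes purely formal. For $x \in L_0$ write $S(x) := \{ j \in \jirr L \mid j \leq x \text{ and } \kappa(j) \geq \exk(x)\}$, so that the assumed equality reads $\jlabel[x_\downarrow,x] = S(x)$. By definition $x \leq_\clo y$ means $S(x) \subseteq S(y)$, so it suffices to show that for $x,y \in L_0$ one has $S(x) \subseteq S(y)$ if and only if $x \leq y$ and $\exk(x) \geq \exk(y)$.

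First I would record two ``recovery'' identities. As noted in the paragraph preceding Theorem \ref{thm:clo-wide}, $x = \bigvee \jlabel[x_\downarrow,x] = \bigvee S(x)$, so $x$ is the join of $S(x)$. Second, I claim $\exk(x) = \bigwedge\{\kappa(j) \mid j \in S(x)\}$. The inequality $\geq$ is immediate: every $j \in S(x)$ satisfies $\kappa(j) \geq \exk(x)$ by the very definition of $S(x)$. For the inequality $\leq$: by Lemma \ref{lem:cjr-label} we have $\CJR(x) = \jlabel_\downarrow x$, and clearly $\jlabel_\downarrow x \subseteq \jlabel[x_\downarrow,x] = S(x)$, hence $\{\kappa(j)\mid j\in\CJR(x)\} \subseteq \{\kappa(j)\mid j\in S(x)\}$, and therefore $\bigwedge\{\kappa(j)\mid j\in S(x)\} \leq \bigwedge\{\kappa(j)\mid j\in\CJR(x)\} = \exk(x)$. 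So $\exk(x)$ is the meet of $\{\kappa(j) \mid j \in S(x)\}$.

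The equivalence then follows. If $x \leq y$ and $\exk(x) \geq \exk(y)$ and $j \in S(x)$, then $j \leq x \leq y$ and $\kappa(j) \geq \exk(x) \geq \exk(y)$, so $j \in S(y)$; hence $S(x) \subseteq S(y)$. Conversely, if $S(x) \subseteq S(y)$, then $x = \bigvee S(x) \leq \bigvee S(y) = y$ by the first identity, and $\exk(y) = \bigwedge\{\kappa(j)\mid j\in S(y)\} \leq \bigwedge\{\kappa(j)\mid j\in S(x)\} = \exk(x)$ by the second identity together with the fact that a meet over a larger index set is smaller. Thus $S(x)\subseteq S(y)$ is equivalent to $x\leq_\kappa y$, i.e.\ $\leq_\clo$ and $\leq_\kappa$ agree on $L_0$.

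The argument is essentially bookkeeping, so I do not expect a genuine obstacle; the only step where the hypothesis is really used is the claim $\exk(x)=\bigwedge\{\kappa(j)\mid j\in S(x)\}$, which needs the inclusion $\jlabel_\downarrow x\subseteq\jlabel[x_\downarrow,x]=S(x)$ on one side and the defining condition $\kappa(j)\geq\exk(x)$ of $S(x)$ on the other. Everything else is valid in an arbitrary complete lattice, so no finiteness hypothesis is needed, and $L_0$ need not equal $L$.
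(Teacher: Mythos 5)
Your proposal is correct and follows essentially the same route as the paper: the forward implication is the same element-wise check, and the converse rests on exactly the two recovery identities $x=\bigvee\jlabel[x_\downarrow,x]$ and $\exk(x)=\bigwedge\{\kappa(j)\mid j\in\jlabel[x_\downarrow,x]\}$, proved the same way (the first from the observation below Definition \ref{def:clo}, the second by sandwiching via $\CJR(x)=\jlabel_\downarrow x\subseteq\jlabel[x_\downarrow,x]$ and the defining condition of $S(x)$). Nothing to add.
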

\begin{proof}
  Let $x, y \in L_0$. Suppose that $x \leq_\kappa$ holds, that is, $x \leq y$ and $\exk(x) \geq \exk(y)$. Then for each $j \in \leq_\kappa$, if $j \leq x$ and $\kappa(j) \geq \exk(x)$, then we have $j \leq x \leq y$ and $\kappa(j) \geq \exk(x) \geq \exk(y)$. This, combined with the assumed equality, implies $\jlabel[x_\downarrow, x] \subseteq \jlabel[y_\downarrow, y]$ that is, $x \leq_\clo y$.

  Conversely, suppose that $x \leq_\clo y$ holds.
  We claim that $z = \bigvee \jlabel[z_\downarrow, z]$ and $\exk(z) = \bigwedge \{\kappa(j) \mid j \in \jlabel[z_\downarrow, z]\}$ hold for each $z \in L_0$. The first equality is shown in the observation below Definition \ref{def:clo}. On the other hand, each $j \in \jlabel[z_\downarrow,z]$ satisfies $\kappa(j) \geq \exk(z)$ by the assumption, and $\CJR(z) = \jlabel_\downarrow z \subseteq \jlabel[z_\downarrow, z]$ by Lemma \ref{lem:cjr-label}. Therefore, we have
  \[
    \exk(z) = \bigwedge \{ \kappa(j) \mid j \in \CJR(z)\}
    \geq \bigwedge \{ \kappa(j) \mid j \in \jlabel[z_\downarrow, z]\}
    \geq \exk(z),
  \]
  which implies the claim.
  Now suppose that $x \leq_\clo y$ holds for $x,y \in L_0$, that is, $\jlabel[x_\downarrow, x] \subseteq \jlabel[y_\downarrow, y]$ holds. By taking the join and the meet, the claim immediately implies $x \leq y$ and $\exk(x) \geq \exk(y)$.
\end{proof}
We note that we can show that $L = \tors\AA$ for an abelian length category $\AA$ satisfies this condition by using Lemma \ref{lem:ie-closed}.

\begin{example}\label{ex:2}
  Let $k$ be a field and $Q$ the following quiver.
  \[
    \begin{tikzcd}[sep = tiny]
      1 \ar[rr, "a"] & & 2 \ar[dl, "b"] \\
      & 3 \ar[ul, "c"]
    \end{tikzcd}
  \]
  Consider the algebra $\Lambda := kQ/\la ab, bc, ca \ra$, and put $L:= \tors\Lambda$.
  In Figure \ref{fig:ex2}, we show the Hasse quiver of $L$ together with the join-irreducible labeling and the Hasse diagram of $L_\kappa = L_\clo$.
  \begin{figure}[htp]
    \caption{$\tors\Lambda$ and $\wide\Lambda$}
    \label{fig:ex2}
    \begin{minipage}{0.45\textwidth}
      \centering
      \begin{tikzpicture}[scale = 0.9]
        \begin{scope}
          \node[circle] (a) at (0,0) {$0$};
          \node (b) at (0,2.5) {$1$};
          \node (c) at (0,5) {$\ov{5}$};
          \node (d) at (1,2.5) {$5$};
          \node (e) at (1,5) {$\ov{3}$};
          \node (f) at (1.7, 1.7) {$2$};
          \node (g) at (1.7,3.3) {$4$};
          \node (h) at (3.3,1.7) {$\ov{4}$};
          \node (i) at (3.3,3.3) {$\ov{1}$};
          \node (j) at (4,0) {$3$};
          \node (k) at (4,2.5) {$\ov{6}$};
          \node (l) at (5,0) {$6$};
          \node (m) at (5,2.5) {$\ov{2}$};
          \node (n) at (5,5) {$\ov{0}$};
        \end{scope}
    
        \begin{scope}[every node/.style={blabel}]
          \draw[->] (k) -> node {$3$} (d);
          \draw[->] (c) to node {$2$} (b);
          \draw[->] (b) -> node {$1$} (a);
          \draw[->] (e) -> node[near end] {$2$} (d);
          \draw[->] (g) -> node[near end] {$4$} (f);
          \draw[->] (i) -> node[near start] {$4$} (h);
          \draw[->] (k) -> node[near start] {$1$} (j);
          \draw[->] (m) -> node {$1$} (l);
          \draw[->] (n) -> node {$2$} (m);
          \draw[->] (l) -> node {$6$} (j);
          \draw[->] (j) -> node {$3$} (a);
          \draw[->] (m) -> node {$6$} (k);
          \draw[->] (d) -> node {$5$} (b);
          \draw[->] (n) -> node {$3$} (e);
          \draw[->] (e) -> node {$5$} (c);
          \draw[->] (h) -> node {$3$} (f);
          \draw[->] (i) -> node {$3$} (g);
          \draw[->] (n) -> node {$1$} (i);
          \draw[->] (f) -> node {$2$} (a);
          \draw[->] (c) -> node[near start] {$1$} (g);
          \draw[->] (h) -> node[near end] {$2$} (l);
        \end{scope}
    
      \end{tikzpicture}
      \subcaption{The Hasse quiver of $\tors\Lambda$}
    \end{minipage}
    \begin{minipage}{0.45\textwidth}
      \centering
      \begin{tikzpicture}
        \node (00) at (2.5, 3) {$\ov{0}$};
        \node (11) at (5, 2) {$\ov{1}$};
        \node (22) at (4, 2) {$\ov{2}$};
        \node (33) at (3, 2) {$\ov{3}$};
        \node (44) at (2, 2) {$\ov{4}$};
        \node (55) at (1, 2) {$\ov{5}$};
        \node (66) at (0, 2) {$\ov{6}$};
        \node (1) at (0, 1) {$1$};
        \node (2) at (1, 1) {$2$};
        \node (3) at (2, 1) {$3$};
        \node (4) at (3, 1) {$4$};
        \node (5) at (4, 1) {$5$};
        \node (6) at (5, 1) {$6$};
        \node (0) at (2.5, 0) {$0$};
      
        \draw (00) edge (11) edge (22) edge (33) edge (44) edge (55) edge (66);
        \draw (0) edge (1) edge (2) edge (3) edge (4) edge (5) edge (6);
        \draw (66) edge (1) edge (3) edge (5);
        \draw (55) edge (1) edge (2) edge (4);
        \draw (44) edge (2) edge (3) edge (6);
        \draw (33) edge (2) edge (5);
        \draw (22) edge (1) edge (6);
        \draw (11) edge (3) edge (4);
      \end{tikzpicture}
      \subcaption{The Hasse diagram of $L_\kappa = L_\clo \cong \wide\Lambda$}
    \end{minipage}
  \end{figure}
  There are six join-irreducible elements $1,\dots, 6$ and $\kappa(i) = \ov{i}$ for each $i$.
  The orbit of $\exk$ is given by $1 \mapsto \ov{1} \mapsto 2 \mapsto \ov{2} \mapsto 3 \mapsto \ov{3} \mapsto 1$, $4 \mapsto \ov{4} \mapsto 5 \mapsto \ov{5} \mapsto 6 \mapsto \ov{6} \mapsto 4$, and $0 \mapsto \ov{0} \mapsto 0$.
  For example, we have $4 \leq_\clo \ov{5}$ since $\jlabel[4_\downarrow, 4] = \jlabel[2,4] =  \{4\}$ and $\jlabel [\ov{5}_\downarrow, \ov{5}] = \jlabel [0,\ov{5}] = \{1,2,4\}$, and we also have $4 \leq_\kappa \ov{5}$ since $4 \leq \ov{5}$ and $\exk(4) = \ov{4} \geq 6 = \exk(\ov{5})$.
  \begin{figure}[htp]
    \label{fig:ex2-wide}
  \end{figure}
\end{example}

\subsection{Combinatorial consequences}\label{sec:combi}
In this subsection, we consider some consequences of our results for particular classes of algebras.
In what follows, we fix a field $k$, and let $Q$ be a Dynkin quiver and $W$ its Coxeter group.

First, consider the path algebra $\Lambda:= kQ$. In \cite{IT}, a combinatorial description of the posets $\tors\Lambda$ and $\wide\Lambda$ are given as follows:
$\tors\Lambda$ is isomorphic the \emph{Cambrian lattice $\C_Q$} (\cite[Theorem 4.3]{IT}), and $\wide\Lambda$ is isomorphic to the \emph{non-crossing partition lattice $\NC(W)$} (\cite[Section 3]{IT}, see also \cite[Theorem 3.7.4.4]{ringel-catalan}). Moreover, Reading \cite[Theorem 10-6.34]{reading-region} showed that $(\C_Q, \leq_\clo)$ is isomorphic to $\NC(W)$.
Then Corollary \ref{cor:two-coincide-wide} implies the following result.
\begin{corollary}
  Let $Q$ be a Dynkin quiver, $W$ the Coxeter group of $Q$, and $\C_Q$ the Cambrian lattice. Then we have $(\C_Q, \leq_\kappa) = (\C_Q, \leq_\clo)$, and this poset is isomorphic to $\wide kQ$ and also to the lattice of non-crossing partitions $\NC(W)$.
\end{corollary}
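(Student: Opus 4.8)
The plan is to obtain this corollary as a direct application of Corollary \ref{cor:two-coincide-wide} combined with the known combinatorial models for $\tors kQ$ and $\wide kQ$.

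First I would set $\AA := \mod kQ$ and apply Corollary \ref{cor:two-coincide-wide}. Because $Q$ is Dynkin, $kQ$ is representation-finite, so $\tors kQ$ is a \emph{finite} lattice; by Proposition \ref{prop:cjr-char} every element then has a canonical join representation, i.e.\ $(\tors kQ)_0 = \tors kQ$. Hence the kappa order $\leq_\kappa$ and the core label order $\leq_\clo$ are both defined on all of $\tors kQ$, they coincide, and $\TTT$ induces a poset isomorphism $\wide kQ \isoto (\tors kQ, \leq_\kappa) = (\tors kQ, \leq_\clo)$.

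Next I would transport this statement along the lattice isomorphism $\tors kQ \iso \C_Q$ of \cite[Theorem 4.3]{IT}. The essential remark---stressed throughout Section \ref{sec:4}---is that $\leq_\kappa$ and $\leq_\clo$ are built out of the order and the join/meet operations of $L$ alone (via $\jirr L$, $\kappa$, $\exk$, $\jlabel$, and $x_\downarrow$), so any lattice isomorphism carries $\leq_\kappa$ to $\leq_\kappa$ and $\leq_\clo$ to $\leq_\clo$. Therefore $(\C_Q, \leq_\kappa) = (\C_Q, \leq_\clo)$, and this common poset is isomorphic to $\wide kQ$. Finally, $\wide kQ \iso \NC(W)$ by \cite[Section 3]{IT} (see also \cite[Theorem 3.7.4.4]{ringel-catalan}), which yields the last isomorphism; Reading's result \cite[Theorem 10-6.34]{reading-region} that $(\C_Q, \leq_\clo) \iso \NC(W)$ then serves as an independent check that the whole chain is consistent, and in fact lets one phrase the argument so that the composite isomorphism agrees with the one of \cite{reading-region}.

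I do not expect any real obstacle here: the mathematical content is entirely contained in Corollary \ref{cor:two-coincide-wide}. The only point worth spelling out explicitly is the invariance of $\leq_\kappa$ and $\leq_\clo$ under lattice isomorphism, which is immediate from their definitions; beyond that one only needs to quote the correct references for $\tors kQ \iso \C_Q$ and $\wide kQ \iso \NC(W)$ and to note that finiteness of $\tors kQ$ (from $Q$ Dynkin) removes any subtlety about the domain $L_0$ of the two orders.
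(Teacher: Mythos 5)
Your argument is correct and is essentially the paper's own: the corollary is obtained by combining Corollary \ref{cor:two-coincide-wide} with the isomorphisms $\tors kQ \iso \C_Q$ and $\wide kQ \iso \NC(W)$ from \cite{IT}, transporting the lattice-theoretically defined orders $\leq_\kappa$ and $\leq_\clo$ along the lattice isomorphism. Your explicit remarks on the finiteness of $\tors kQ$ (so $L_0 = L$) and on the invariance of the two orders under lattice isomorphism are exactly the (implicit) content of the paper's deduction.
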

The relation between our results and others is summarized as the following commutative diagram consisting of poset isomorphisms:
\[
  \begin{tikzcd}
    \wide kQ \rar[<->, "\text{\cite{IT}}"] \dar[<->, "(*)"']
    & \NC(W) \dar[<->, "\text{\cite{reading-region}}"] \\
    (\C_Q, \leq_\kappa) \rar[equal, "(*)"'] & (\C_Q, \leq_\clo)
  \end{tikzcd}
\]
Here $(*)$ are new results which follow from our study. For example, assuming Ingalls--Thomas's result, the above corollary provides a new proof of the fact shown in \cite{reading-region} that $\NC(W)$ is isomorphic to the core label order of $\C_Q$.

Next, we consider the preprojective algebra $\Pi_Q$, whose definition we omit. Mizuno proved in \cite[Theorem 2.30]{mizuno} that $\tors\Pi_Q$ is isomorphic to $(W,\leq)$, where $\leq$ is the right weak order. On the other hand, it is implicitly shown in \cite{thomas-shard} that $\wide \Pi_Q$ is isomorphic to $(W,\leq_\clo)$, that is, Reading's original shard intersection order on $W$.
In this case, Corollary \ref{cor:two-coincide-wide} implies the following consequence.
\begin{corollary}
  Let $Q$ be a Dynkin quiver, $W$ its Weyl group (with the right weak order), and $\Pi_Q$ its preprojective algebra. Then we have $(W,\leq_\kappa) = (W,\leq_\clo)$, and this poset is isomorphic to $\wide\Pi_Q$.
\end{corollary}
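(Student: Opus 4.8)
The plan is to deduce the statement from Corollary~\ref{cor:two-coincide-wide} together with Mizuno's description of $\tors\Pi_Q$, by transporting structure along a lattice isomorphism. Since $Q$ is Dynkin, the Weyl group $W$ is finite, so the right weak order $(W,\leq)$ is a finite lattice, and by Mizuno \cite[Theorem 2.30]{mizuno} there is a lattice isomorphism $\Psi\colon(W,\leq)\isoto\tors\Pi_Q$. In particular $\tors\Pi_Q$ is a finite semidistributive lattice, hence by Proposition~\ref{prop:cjr-char} every element has a canonical join representation; thus $W=W_0$ and $(\tors\Pi_Q)_0=\tors\Pi_Q$, so $\leq_\kappa$ and $\leq_\clo$ are genuine partial orders on all of $W$.

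Next I would observe that all the data entering the definitions of the kappa order and the core label order --- the assignment $j\mapsto j_*$, the bijection $\kappa$ (Definition~\ref{def:kappa}), the extended kappa map $\exk$, the element $x_\downarrow$, and the map $\jlabel$ (Definitions~\ref{def:jlabel} and~\ref{def:clo}) --- are defined purely in terms of the underlying lattice. Consequently any lattice isomorphism carries $\leq_\kappa$ to $\leq_\kappa$ and $\leq_\clo$ to $\leq_\clo$; applying this to $\Psi$ gives poset isomorphisms $(W,\leq_\kappa)\iso(\tors\Pi_Q)_\kappa$ and $(W,\leq_\clo)\iso(\tors\Pi_Q)_\clo$.

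Finally I would apply Corollary~\ref{cor:two-coincide-wide} to the abelian length category $\AA=\mod\Pi_Q$: it yields $(\tors\Pi_Q)_\kappa=(\tors\Pi_Q)_\clo$ together with a poset isomorphism $\TTT\colon\wide\Pi_Q\isoto(\tors\Pi_Q)_\kappa$. Combining with the previous paragraph gives $(W,\leq_\kappa)=(W,\leq_\clo)$ and an isomorphism of this common poset with $\wide\Pi_Q$, as claimed. I do not expect a real obstacle here: the whole content lies in Corollary~\ref{cor:two-coincide-wide} (hence ultimately in Theorems~\ref{thm:kappa-wide} and~\ref{thm:clo-wide}, Lemma~\ref{lem:ie-closed}, and Theorem~\ref{thm:btz}) and in Mizuno's theorem, and the only thing one must check directly is the routine invariance of the two orders under lattice isomorphism, which is immediate from their lattice-theoretic definitions.
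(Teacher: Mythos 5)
Your proposal is correct and follows exactly the route the paper intends: Mizuno's lattice isomorphism $(W,\leq)\cong\tors\Pi_Q$, the observation that $\leq_\kappa$ and $\leq_\clo$ are defined purely lattice-theoretically (so they transport along any lattice isomorphism), and then Corollary \ref{cor:two-coincide-wide} applied to $\mod\Pi_Q$. The remark that finiteness of $W$ ensures $W=W_0$ via Proposition \ref{prop:cjr-char} is a worthwhile detail that the paper leaves implicit.
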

This provides a new simple description of the shard intersection order on $W$. Actually, we can extend this result to non-simply-laced case by using Gei\ss--Leclerc--Schr\"oer's generalized preprojective algebra \cite{GLS}. For a symmetrizable generalized Cartan matrix $C$ with a symmetrizer $D$, they defined an algebra $\Pi(C, D)$, which is finite-dimensional if $C$ is of Dynkin type.
Fu--Geng \cite{FG} extends Mizuno's result to this setting: he showed that $\tors\Pi(C,D)$ is isomorphic to the Weyl group $W$ of the Kac-Moody Lie algebra associated with $C$ if $C$ is of Dynkin type.
We omit the statement about generalized preprojective algebras. Instead, we give some applications to the shard intersection order on any finite Coxeter group.
\begin{proposition}\label{prop:coxeter-shard}
  Let $W$ be a finite Coxeter group together with the right weak order. Then the kappa order and the core label order (= the shard intersection order) on $W$ coincide.
\end{proposition}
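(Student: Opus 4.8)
The plan is to reduce the statement to the sufficient condition isolated in Proposition~\ref{prop:coincide-condition} and then to verify that condition for the right weak order. Write $L:=(W,\le)$; it is well known that $L$ is a finite semidistributive lattice, so $L_0=L$ by Proposition~\ref{prop:cjr-char}, and the core label order $\leq_\clo$ of Definition~\ref{def:clo} is M\"uhle's core label order \cite{muhle}, which for the weak order coincides with Reading's shard intersection order \cite{reading-shard} by construction (cf.\ \cite{reading-region}); this accounts for the parenthetical identification in the statement. Hence it remains to prove $\leq_\kappa=\leq_\clo$ on $L$, and by Proposition~\ref{prop:coincide-condition} it suffices to establish, for every $w\in W$, the equality $\jlabel[w_\downarrow,w]=\{\,j\in\jirr L\mid j\le w\ \text{and}\ \kappa(j)\ge\exk(w)\,\}$.

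One inclusion is automatic in any completely semidistributive lattice. Since $\CJR(w)=\jlabel_\downarrow w$ by Lemma~\ref{lem:cjr-label}, each $j\in\CJR(w)$ is the join-irreducible label of a lower cover $w\to w_j$ of $w$, and by Theorem~\ref{thm:kappa-bij} together with Lemma~\ref{lem:label-char} the corresponding meet-irreducible label is $\kappa(j)$, with $w\wedge\kappa(j)=w_j$; thus $\kappa(j)\ge w_j\ge w_\downarrow$, and taking the meet over $j\in\CJR(w)$ yields $\exk(w)\ge w_\downarrow$. As $\jlabel[w_\downarrow,w]=\{j\in\jirr L\mid j\le w,\ \kappa(j)\ge w_\downarrow\}$ by Definition~\ref{def:jlabel}, the right-hand side of the target equality is automatically contained in $\jlabel[w_\downarrow,w]$. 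The content is therefore the reverse inclusion: every join-irreducible label occurring in the interval $[w_\downarrow,w]$ (Theorem~\ref{thm:jlabel-label}) has $\kappa$-image $\ge\exk(w)$.

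For crystallographic $W$ this reverse inclusion is already in hand. By Mizuno \cite{mizuno} (simply-laced) and Fu--Geng \cite{FG} (general Dynkin type), $(W,\le)$ is isomorphic to $\tors\Pi$ for a suitable (generalized) preprojective algebra $\Pi$, and the target equality is precisely the condition verified for $L=\tors\AA$ in the remark following Proposition~\ref{prop:coincide-condition} (via Lemma~\ref{lem:ie-closed}); equivalently, one may quote Corollary~\ref{cor:two-coincide-wide}. Since both $\leq_\kappa$ and $\leq_\clo$ are compatible with the decomposition of $W$ into irreducible factors (the weak order of a product is the product of weak orders, and $\CJR$, $\kappa$, $\exk$, $\jlabel$ all split along factors), it remains to treat the non-crystallographic irreducible types: $H_3$, $H_4$, and $I_2(m)$ with $m=5$ or $m\ge 7$. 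For $I_2(m)$ the weak order is the explicit lattice with least element $e$, greatest element $w_0$, and two incomparable chains of length $m-1$ joining them; one computes that $\jirr L=\mirr L$ is the set of $2(m-1)$ interior elements, that $\kappa$ acts on it as a single $2(m-1)$-cycle, and then verifies the target equality elementwise — a finite check uniform in $m$. For $H_3$ ($120$ elements) and $H_4$ ($14400$ elements) one feeds the abstract lattice $(W,\le)$ to the program \cite{program} and checks the condition directly (equivalently, computes $(W,\leq_\kappa)$ and $(W,\leq_\clo)$ and observes that they agree).

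The main obstacle is exactly this last point: for $H_3$, $H_4$ and the $I_2(m)$ there is no ambient abelian length category, so Lemma~\ref{lem:ie-closed} is unavailable and the target equality must be established intrinsically in the weak order. A conceptually cleaner route — handling all finite Coxeter groups uniformly and avoiding the case analysis — would verify the hypothesis of Proposition~\ref{prop:coincide-condition} directly through Reading's theory of shards: under the bijection between $\jirr L$ and shards of the Coxeter arrangement, $[w_\downarrow,w]$ is the weak order on the parabolic $W_{D_R(w)}$ translated by $w_\downarrow$, the set $\jlabel[w_\downarrow,w]$ corresponds to the shards in the core attached to $w$, and Reading's theorem that these shards all contain the shard-intersection subspace cut out by the canonical joinands of $w$ is exactly the assertion $\kappa(j)\ge\exk(w)$ for $j\in\jlabel[w_\downarrow,w]$. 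Making this dictionary precise — in particular checking that it intertwines $\kappa$ with ``the hyperplane carrying a shard'' and $\exk$ with the shard-intersection subspace — is the technical heart of that approach; the explicit-and-computational treatment above sidesteps it at the cost of a case distinction.
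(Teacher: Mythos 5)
Your proposal is correct and follows essentially the same route as the paper: reduce to irreducible $W$, handle the crystallographic types via Mizuno/Fu--Geng and the preprojective algebra realization of the weak order as $\tors\Pi$ (so Corollary~\ref{cor:two-coincide-wide} applies), check $I_2(m)$ by hand, and verify the hypothesis of Proposition~\ref{prop:coincide-condition} by computer for $H_3$ and $H_4$. The extra observations (that one inclusion in that hypothesis is automatic, and the sketched uniform argument via shards) are sound additions but do not change the structure of the argument.
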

\begin{proof}
  The standard argument on direct products shows that we may assume that $W$ is irreducible.
  It is well-known that a finite irreducible Coxeter group $W$ can be realized as the Weyl group of the Kac-Moody Lie algebra of a symmetrizable generalized Cartan matrix $C$ (i.e. a finite crystallographic reflection group) except certain cases: type $I_2(n)$, $H_3$ and $H_4$ (see e.g. \cite{hum}).
  If $W$ can be realized as the Weyl group, then $(W,\leq)$ is isomorphic to $\tors\Lambda$ for some finite-dimensional algebra by the above argument $\Lambda$.
  If $W$ is of type $I_2(n)$, then we can easily check the assertion (or one can construct an algebra $\Lambda$ such that $W$ is isomorphic to $\tors\Lambda$ as posets, see \cite[Proposition 6.1]{kase}).
  Finally, if $W$ is of type $H_3$ or $H_4$, then one can use SageMath \cite{sage} to verify the sufficient condition Proposition \ref{prop:coincide-condition}.
\end{proof}

\addtocontents{toc}{\SkipTocEntry}
\subsection*{Acknowledgement}
The author would like to thank Osamu Iyama for sharing him the proof of Lemma \ref{lem:ie-closed}. He would also like to thank Yuya Mizuno for helpful discussions.
This work is supported by JSPS KAKENHI Grant Number JP21J00299.

\end{document}